\documentclass[reqno, 12pt]{amsart}
\pdfoutput=1
\makeatletter
\let\origsection=\section \def\section{\@ifstar{\origsection*}{\mysection}} 
\def\mysection{\@startsection{section}{1}\z@{.7\linespacing\@plus\linespacing}{.5\linespacing}{\normalfont\scshape\centering\S}}
\makeatother        

\usepackage{amsmath,amssymb,amsthm}
\usepackage{mathrsfs}
\usepackage{mathabx}\changenotsign
\usepackage{dsfont}
 
\usepackage{xcolor}
\usepackage[backref]{hyperref}
\hypersetup{
    colorlinks,
    linkcolor={red!60!black},
    citecolor={green!60!black},
    urlcolor={blue!60!black}
}

\usepackage[open,openlevel=2,atend]{bookmark}

\usepackage[abbrev,msc-links,backrefs]{amsrefs} 
\usepackage{doi}

\renewcommand{\PrintDOI}[1]{\doi{#1}}

\usepackage[T1]{fontenc}
\usepackage{lmodern}
\usepackage[babel]{microtype}

\usepackage[english]{babel}

\linespread{1.2}
\usepackage{geometry}
\geometry{left=27.5mm,right=27.5mm, top=25mm, bottom=25mm}

\usepackage{enumitem}
\def\rmlabel{\upshape({\itshape \roman*\,})}

\def\alabel{\upshape({\itshape \alph*\,})}
\def\Alabel{\upshape({\itshape \Alph*\,})}

\def\AAlabel{\upshape({A\arabic*})}
\def\Blabel{\upshape({B\arabic*})}
\def\Nlabel{\upshape({N\arabic*})}
\def\Clabel{\upshape({C\arabic*})}
\def\Flabel{\upshape({F\arabic*})}

\def\Rlabel{\upshape({R\arabic*})}

\def\Zlabel{\upshape({Z\arabic*})}
\def\Xilabel{\upshape({\,\itshape $\Xi$\arabic*\,})}

\def\Ilabelb{\upshape({I})}
\def\Tlabelb{\upshape({T})}
\def\Zlabelb{\upshape({Z})}

\let\polishlcross=\l
\def\l{\ifmmode\ell\else\polishlcross\fi}

\def\tand{\ \text{and}\ }
\def\qand{\quad\text{and}\quad}
\def\qqand{\qquad\text{and}\qquad}

\let\emptyset=\varnothing
\let\setminus=\smallsetminus

\makeatletter
\def\moverlay{\mathpalette\mov@rlay}
\def\mov@rlay#1#2{\leavevmode\vtop{   \baselineskip\z@skip \lineskiplimit-\maxdimen
   \ialign{\hfil$\m@th#1##$\hfil\cr#2\crcr}}}
\newcommand{\charfusion}[3][\mathord]{
    #1{\ifx#1\mathop\vphantom{#2}\fi
        \mathpalette\mov@rlay{#2\cr#3}
      }
    \ifx#1\mathop\expandafter\displaylimits\fi}
\makeatother

\newcommand{\dcup}{\charfusion[\mathbin]{\cup}{\cdot}}

\let\vphi=\varphi
\let\eps=\varepsilon
\let\theta=\vartheta
\let\rho=\varrho

\DeclareMathOperator{\Base}{Base} 
\DeclareMathOperator{\dens}{dens}
\DeclareMathOperator{\mad}{mad}

\def\tand{\ \text{and}\ }
\def\qand{\quad\text{and}\quad}
\def\qqand{\qquad\text{and}\qquad}

\def\NN{\mathds N}
\def\ZZ{\mathds Z}

\def\P{\mathds P}
\def\E{\mathds E}
\def\cA{{\mathcal A}}
\def\cH{{\mathcal H}}

\def\cF{{\mathcal F}}
\def\cG{{\mathcal G}}

\def\cC{{\mathcal C}}

\def\cJ{{\mathcal J}}

\def\cP{{\mathcal P}}

\def\cS{{\mathcal S}}

\def\cW{{\mathcal W}}

\def\ZZn{\ZZ/n\ZZ}

\def\talpha{\widetilde{\alpha}}

\def\Var{\text{Var}}

\theoremstyle{plain}
\newtheorem{satz}{Satz} 
\newtheorem{fact}[satz]{Fact}
\newtheorem{cor}[satz]{Corollary}
\newtheorem{definition}[satz]{Definition}
\newtheorem{lemma}[satz]{Lemma}

\newtheorem{theorem}[satz]{Theorem}

\newtheoremstyle{note}
  {4pt}
  {4pt}
  {\sl}
  {}
  {\itshape}
  {.}
  {.5em}
  {}

\theoremstyle{note}

\DeclareFontFamily{U}  {MnSymbolC}{}
\DeclareSymbolFont{MnSyC}         {U}  {MnSymbolC}{m}{n}
\DeclareFontShape{U}{MnSymbolC}{m}{n}{
    <-6>  MnSymbolC5
   <6-7>  MnSymbolC6
   <7-8>  MnSymbolC7
   <8-9>  MnSymbolC8
   <9-10> MnSymbolC9
  <10-12> MnSymbolC10
  <12->   MnSymbolC12}{}
\DeclareMathSymbol{\powerset}{\mathord}{MnSyC}{180}

\begin{document}
\title[Sharp thresholds for Ramsey properties of some nearly bipartite graphs]{Sharp thresholds for Ramsey properties of strictly balanced nearly bipartite graphs}

\author{Mathias Schacht}
\address{Fachbereich Mathematik, Universit\"at Hamburg, Hamburg, Germany}
\email{schacht@math.uni-hamburg.de}
\thanks{The first author was supported through the \emph{Heisenberg-Programme} of the DFG\@.}

\author{Fabian Schulenburg}
\email{fabian.schulenburg@uni-hamburg.de}

\keywords{sharp thresholds, Ramsey's theorem, cycles}
\subjclass[2010]{05C80 (primary), 05D10 (secondary)}

\date{\today}

\begin{abstract}
For a given graph~$F$ we consider the family of (finite) graphs~$G$ with the \emph{Ramsey property} for~$F$, that is the set of such graphs~$G$ with the property that every two-colouring of the edges of~$G$ yields a monochromatic copy of~$F$. 
For~$F$ being a triangle Friedgut, R\"odl, Ruci\'{n}ski, and Tetali~(2004) established the sharp threshold for the Ramsey property in random graphs.
We obtained a simpler proof of this result which extends to a more general class of graphs~$F$ including all cycles.

The proof is based on Friedgut's criteria~(1999) for sharp thresholds, and on the recently developed \emph{container method} for independent sets in hypergraphs by Saxton and Thomason, and Balogh, Morris and Samotij. The proof builds on some recent work of Friedgut et al.\ who established a similar result for van der Waerden's theorem.
\end{abstract} 
\maketitle

\section{Introduction}

A common theme in extremal and probabilistic combinatorics in recent years concerns the transfer of classical results to sparse random structures. Prime examples include Ramsey's theorem, Tur\'{a}n's theorem,  
and Szemer\'{e}di's theorem (see, e.g.,~\cites{RR95,CG10,FrRoSch10,Sch09}).
Here we often  want to replace the complete graph~$K_n$ or the set of integers $[n]=\{1,\dots,n\}$
(implicitly appearing in the classical results mentioned above) 
by a random graph $G(n,p)$ or a random subset of $[n]$. 

For example, in the context of Ramsey's theorem
for a given number of colours~$k$ and a graph $F$, one may consider the class $\cA$ of all 
graphs $G$ with the property that every $k$-colouring of its edges yields a monochromatic copy of $F$.
This leads to the following question: 
\textsl{When does the binomial random graph $G(n,p)$ satisfy~$\cA$ asymptotically almost surely (a.a.s.)?}
More precisely, for which~$p=p(n)$ we have $\lim_{n\rightarrow\infty}\P(G(n,p)\in\cA)=1$? It turns out that for many natural graph properties there exists a threshold function $\hat{p}=\hat{p}(n)$ such that
\begin{equation}\label{eq:1}
\lim_{n\rightarrow \infty}\P(G(n,p)\in \cA)= \begin{cases}
0, & \mathrm{if}\; p=o(\hat{p}), \\
1, & \mathrm{if}\; p=\omega(\hat{p}).
\end{cases}
\end{equation}

After establishing a threshold for a given property~$\cA$, one may study more closely how quickly the transition from a.a.s.\ not having~$\cA$ to a.a.s.\ having~$\cA$ occurs. If 
one can replace $p=o(\hat{p})$ in~\eqref{eq:1} by $p\leq(1-\eps)\hat{p}$ for every $\eps>0$ and similarly $p=\omega(\hat{p})$ can be replaced by~$p\geq (1+\eps)\hat{p}$, then the threshold is \textit{sharp} and otherwise  it is \textit{coarse}.

In that direction only a few results are known. In~\cite{FrBou99} Friedgut presents a characterization of coarse thresholds in a general setting. In case of random graphs it roughly says that a  threshold is coarse if and only if it is correlated to a local property. For example the graph property ``$G(n,p)$ contains a triangle'' depends on local events and has a coarse threshold while the graph property ``$G(n,p)$ is connected'' is a global property and has in fact a sharp threshold.

Friedgut's work yields a tool to verify sharp thresholds by contradiction. Supposing to the contrary that
 the threshold in question would be coarse, one may use the characterization of Friedgut to deduce additional 
 structural properties (see e.g.\ Theorem~\ref{thm:Bou}) which might be  used to derive a contradiction.

There are some results in this area based on this approach. For example, it was shown in~\cite{FrKr00} that the Ramsey-type property ``in every vertex colouring of $G(n,p)$ with two colours there is a monochromatic triangle'' has a sharp threshold (see~\cite{FrKr00} for some related results).

Regarding Ramsey-type properties concerning edge colourings the applicability of Friedgut's criterion seems more involved. In that direction it was shown by  Friedgut, R\"odl, Ruci\'{n}ski, and Tetali in~\cite{FRRT06} that the Ramsey property for the triangle and two colours has a sharp threshold. More recently, Friedgut, H\`{a}n, Person and Schacht~\cite{FHPS14} studied van der Waerden's property in random subsets of~$\ZZ/n\ZZ$ and established a sharp threshold for this property.
Essentially the same proof yields the sharpness of the threshold 
for the Ramsey properties of strictly balanced (see~\eqref{eq:m2-def} below) $k$-partite $k$-uniform hypergraphs and, 
hence, in particular for even cycles in graphs and two colours.

We extend this research to non-bipartite graphs. In particular, we obtain a shorter proof of the triangle result from~\cite{FRRT06}. We will use the arrow notation from Ramsey theory. For two graphs~$G$ and~$F$ we write $G\rightarrow (F)_r^e$ if for all edge colourings of~$G$ with~$r$ colours there exists a monochromatic copy of~$F$. If, on the other hand, there
is an $r$-colouring of $E(G)$ with no monochromatic copy of $F$, then we write $G\nrightarrow (F)_r^e$.
 Our first result establishes the sharp threshold when $F$ is a 
cycle.

\begin{theorem}\label{thm:MainCycle}
For a cycle~$C_k$ of length $k\geq 3$ there exist positive constants~$c_0$ and~$c_1$ and a function~$c(n)$ with $c_0<c(n)<c_1$ such that for all $\eps >0$ we have
\[\lim_{n\rightarrow \infty} \P(G(n,p)\rightarrow (C_k)_2^e)=
\begin{cases}
0, & \mathrm{if}\; p\leq (1-\eps)c(n)n^{-(k-2)/(k-1)}, \\
1, & \mathrm{if}\; p\geq (1+\eps)c(n)n^{-(k-2)/(k-1)}.
\end{cases}\]
\end{theorem}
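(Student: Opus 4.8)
The plan is to follow the by-now-standard route for sharp thresholds of Ramsey properties, adapting the strategy of \cite{FRRT06} and \cite{FHPS14} to the cycle $C_k$. The $0$-statement is the easier half: below the threshold $n^{-(k-2)/(k-1)}=n^{-1/m_2(C_k)}$ (recall $m_2(C_k)=(k-1)/(k-2)$) one uses the known coarse-scale result that $G(n,p)\not\to(C_k)_2^e$ a.a.s.\ when $p=o(n^{-1/m_2(C_k)})$, together with a more careful second-moment / deletion argument to push the statement up to $p\le(1-\eps)c(n)n^{-1/m_2(C_k)}$ for a suitable $c(n)$ bounded away from $0$ and $\infty$. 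Concretely, I would show that for $p$ in this range the copies of $C_k$ in $G(n,p)$ are, after deleting a sparse set of edges, sufficiently ``spread out'' (each edge in few copies, few copies pairwise sharing edges) that a random red/blue colouring avoids a monochromatic $C_k$ with positive probability, hence one exists; alternatively invoke the container-type sparse counting for $C_k$'s to control overlaps.

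For the $1$-statement the heart is Friedgut's criterion. Fix the family $\cA=\{G:G\to(C_k)_2^e\}$, which is monotone increasing, and suppose for contradiction that its threshold is coarse at $\hat p=n^{-1/m_2(C_k)}$. Then by Friedgut's theorem (in the form recalled as Theorem~\ref{thm:Bou}) there is a constant $\beta>0$, a fixed ``booster'' graph $B$, and a value $p=\Theta(\hat p)$ such that: $\P(G(n,p)\in\cA)$ is bounded away from $0$ and $1$; conditioning on the appearance of a fixed copy of $B$ raises the probability of $\cA$ by at least $\beta$; and, crucially, $G(n,p)$ together with a sprinkled $G(n,\delta p)$ still a.a.s.\ fails $\cA$ for small $\delta$. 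The strategy is then to derive a contradiction by showing that the ``core'' structure forced by being near-but-not-in $\cA$ is so rigid that adding the booster cannot help more than negligibly. This uses the stability of near-extremal colourings: if $G(n,p)\notin\cA$ then it has a $2$-colouring with no monochromatic $C_k$, and one argues (via a container/hypergraph-independent-set description of the $C_k$-free-in-one-colour-class structures, as developed by Saxton--Thomason and Balogh--Morris--Samotij) that essentially all such colourings look alike up to a bounded number of ``defects''. A sprinkling argument then shows that a tiny random addition of edges a.a.s.\ destroys every such colouring unless $G(n,p)$ was already ``essentially in $\cA$'', contradicting the booster gain of $\beta$.

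The key technical ingredient, and the step I expect to be the main obstacle, is the structural/stability statement: that for $p$ slightly above the threshold, every $2$-edge-colouring of $G(n,p)$ with no monochromatic $C_k$ has its colour classes confined to a bounded list of near-extremal configurations, with few exceptional edges. For even cycles this parallels \cite{FHPS14}, but for odd $k$ the graph $C_k$ is not bipartite, so the relevant extremal $C_k$-free structure is different and one must understand the ``Ramsey-critical'' configurations for $C_k$ directly. I would handle this by: (i) using the container method to cover all ``$C_k$-free in colour $1$'' graphs by relatively few containers, each close to a maximum $C_k$-free graph; (ii) combining the two colour classes and using a supersaturation/counting argument for $C_k$ to show that if both classes are ``spread'' then $G(n,p)$ a.a.s.\ contains a monochromatic $C_k$; (iii) concluding that the colouring must be ``clustered'' in the precise sense needed for the sprinkling step. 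Making (ii)--(iii) quantitatively tight enough to feed Friedgut's criterion --- in particular getting the error terms small enough that the booster gain $\beta$ is contradicted --- is where the real work lies, and where the strict balancedness of $C_k$ (which controls the number of copies containing a fixed small subgraph) is used repeatedly.
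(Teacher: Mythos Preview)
Your overall framework --- Friedgut's criterion plus hypergraph containers plus a sprinkling argument --- matches the paper's. However, several key steps are mis-specified, and the proposal as written has a genuine gap.

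First, a structural point: there is no separate $0$-statement to prove. The theorem is exactly the assertion that the threshold is \emph{sharp}; once you show (by contradiction via Friedgut's criterion) that it is not coarse, the existence of a bounded $c(n)$ with the stated two-sided conclusion follows directly from the R\"odl--Ruci\'nski theorem (Theorem~\ref{thm:Coa}). Your first paragraph is therefore unnecessary.

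Second, and more seriously, the container step in the paper is \emph{not} applied to the hypergraph of $C_k$-copies (equivalently, to the family of $C_k$-free subgraphs), as you propose in step~(i). Instead, Corollary~\ref{cor:Boo} hands you a specific graph $Z$ and a booster $B$ with $Z\cup h(B)\to(C_k)_2^e$ for most embeddings $h$; the paper then builds an auxiliary hypergraph $\cH=\cH(Z,\Xi)$ whose vertices are the edges of $Z$ and whose hyperedges are the sets $M(Z,h(B))$ of edges of $Z$ that ``focus on'' $h(B)$. The crucial observation (Fact~\ref{fact:hittingset}) is that every $C_k$-free colouring $\vphi$ of $Z$, combined with a fixed $C_k$-free colouring $\sigma$ of $B$, yields a \emph{hitting set} $A^\sigma_\vphi$ of $\cH$, and any two such hitting sets agree in colour on their intersection. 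Containers are applied to $\cH$ (Lemma~\ref{lem:hauptlemma1}) to produce a small family $\cC$ of ``cores'' such that every hitting set contains some $C\in\cC$; this groups the colourings into $\exp(o(pn^2))$ classes, each agreeing on a core of size $\Omega(pn^2)$. Your direct-container plan on $C_k$-free graphs does not obviously yield this colour-rigidity: knowing that each colour class lies in one of few containers does not by itself force two different $C_k$-free colourings to agree on a large common set, and without that you cannot beat the trivial $2^{\Theta(pn^2)}$ bound on the number of colourings in the final union bound.

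Third, the step you flag as the main obstacle --- odd cycles not being bipartite --- is handled by a device you do not mention: the \emph{nearly bipartite} structure $C_k=P_{k}+e$ (a path plus one edge). Given a core $C$ on which all colourings in a class agree, take the larger colour, say red, on $R_C\subseteq C$, and form the basegraph $\Base_{C_k}(R_C)$ of pairs completing a red copy of the bipartite graph $C_k-e$ to a full $C_k$. Lemma~\ref{lem:base} (proved via sparse regularity and a sparse counting lemma for the \emph{bipartite} graph $C_k-e$, so the non-bipartiteness of $C_k$ never enters) shows that $\Base_{C_k}(R_C)$ contains $\Omega(n^{k})$ copies of $C_k$. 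Every sprinkled edge landing in $\Base_{C_k}(R_C)$ is forced blue, and Janson's inequality then gives a blue $C_k$ in $G(n,\eps p)\cap\Base_{C_k}(R_C)$ with failure probability $\exp(-\Omega(pn^2))$, small enough to union-bound over $|\cC|$. Your steps (ii)--(iii) gesture at a supersaturation/spreading dichotomy but do not supply this mechanism, and without it there is no way to conclude that sprinkling defeats every colouring in a given class simultaneously.
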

We establish the sharpness of the threshold for Ramsey-properties of a more general class of graphs than cycles. For a graph $F=(V,E)$ we write $v(F)=\vert V(F)\vert $ and $e(F)=\vert E(F)\vert$. For graphs $F$ with at least one edge let the \textit{$2$-density} $m_2(F)$ be defined by
\[m_2(F)=\max\lbrace d_2(F^\prime)\colon F^\prime \subseteq F \text{ and } e(F^\prime)\geq 1 \rbrace\,,\]
where
\begin{equation}
	\label{eq:m2-def}
	d_2(F^\prime)=\begin{cases}
	\frac{e(F^\prime)-1}{v(F^\prime)-2},& \mathrm{if }\; v(F^\prime)>2,\\
	1,& \mathrm{if }\; F^\prime =K_2.\end{cases}
\end{equation}
If $d_2(F)=m_2(F)$, then~$F$ is \textit{balanced}. Moreover, $F$ is 
\textit{strictly balanced} if in addition $d_2(F^\prime)<m_2(F)$ for all proper subgraphs $F^\prime\subsetneq F$
with at least one edge. We say a graph~$F$ is \textit{nearly bipartite} if $e(F)\geq 2$ and 
there is a bipartite graph~$F^\prime$ and some edge $e$ such that $F=F^\prime+e =(V(F^\prime),E(F^\prime)\cup \lbrace e\rbrace)$. 
Note that this definition includes all bipartite graphs with at least two edges.
Since for every $k\geq 3$ the cycle~$C_k$ of length $k$
is strictly balanced and
nearly bipartite, the following result includes
Theorem~\ref{thm:MainCycle} as a special case.

\begin{theorem}\label{thm:Main}
For all strictly balanced and nearly bipartite graphs $F$
there exist positive constants~$c_0$ and~$c_1$ and a function~$c(n)$ with $c_0<c(n)<c_1$ such that for all $\eps >0$ we have
\[\lim_{n\rightarrow \infty} \P(G(n,p)\rightarrow (F)_2^e)=
\begin{cases}
0, & \mathrm{if}\; p\leq (1-\eps)c(n)n^{-1/m_2(F)}, \\
1, & \mathrm{if}\; p\geq (1+\eps)c(n)n^{-1/m_2(F)}.
\end{cases}\]
\end{theorem}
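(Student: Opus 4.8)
The plan is to prove Theorem~\ref{thm:Main} in two parts: the $0$-statement (which is the easier half and essentially known), and the $1$-statement together with the sharpness, which is where Friedgut's criterion and the container method enter.

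For the $0$-statement, the strategy is standard. The threshold exponent $n^{-1/m_2(F)}$ is exactly the point at which the expected number of copies of $F$ through a fixed edge becomes constant, and strict balancedness guarantees that copies of $F$ do not cluster. When $p\le(1-\eps)c(n)n^{-1/m_2(F)}$ with $c(n)$ bounded, one shows that a.a.s.\ the copies of $F$ in $G(n,p)$ are ``sparsely distributed'': after deleting one edge from each copy of $F$ lying in a small dense subgraph, the remaining copies of $F$ form (essentially) a linear hypergraph, and a greedy/random $2$-colouring avoids a monochromatic copy. This part will follow the now-classical arguments (as in the Rödl--Ruciński $0$-statement and its refinements), and $c_0$ will come from making the constant in this colouring argument explicit; I expect to cite or lightly adapt existing work rather than reprove it from scratch.

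For the $1$-statement with sharpness, I would argue by contradiction using Friedgut's characterisation (Theorem~\ref{thm:Bou}, referenced above). Assume the threshold is coarse. Then there is a constant $c^*$, a sequence of ``witness'' graphs $G_0$ of bounded size, and an $\eps>0$ such that, at density $p\sim c^* n^{-1/m_2(F)}$, with probability bounded away from $0$ and $1$ the random graph $G(n,p)$ satisfies $G(n,p)\nrightarrow(F)_2^e$ but adding a random copy of $G_0$ destroys this with probability bounded below. The container method is then used to understand the structure of the family of graphs $H$ on $[n]$ with $H\nrightarrow(F)_2^e$: since a $2$-colouring with no monochromatic $F$ is precisely an independent set in the $3$-uniform (or appropriately defined) ``Ramsey hypergraph'' whose edges encode pairs of copies of $F$ sharing the right structure, the hypergraph container theorem of Saxton--Thomason / Balogh--Morris--Samotij yields a bounded family of containers, each of which is $F$-Ramsey-sparse in a quantitative sense. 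One shows that a typical $G(n,p)$ that is not Ramsey must be almost entirely contained in one such container, and moreover that the container is very close to ``bipartite-like'' for $F$ — this is where the hypothesis that $F$ is \emph{nearly bipartite} is crucial: the extremal (Ramsey-sparse) configurations correspond to colourings where one colour class is bipartite (hence $F$-free because $F$ has an odd-edge / the single extra edge $e$ cannot be realised) and the other is arbitrary, so containers look like ``(near-bipartite graph) $\dcup$ (arbitrary sparse graph)''.

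With the structural picture in hand, the contradiction comes from a \emph{local vs.\ global} dichotomy exactly as in~\cite{FHPS14} and~\cite{FRRT06}: coarseness forces the ``boosting'' effect of adding a single bounded graph $G_0$ to be non-negligible, which (via Friedgut's theorem) forces the event $\{H\nrightarrow(F)_2^e\}$ to be approximable by a local event — essentially that $H$ avoids some fixed finite configuration. But the container structure shows the opposite: whether $G(n,p)$ is Ramsey depends on a genuinely global near-bipartite partition of almost all of its edges, which cannot be captured by the presence or absence of boundedly many edges, because one can locally modify any fixed bounded subgraph while staying inside (or moving between) containers. Quantifying this — showing that the probability of being Ramsey changes only by $o(1)$ under the relevant local perturbation, against Friedgut's lower bound $\Omega(1)$ — yields the contradiction, and pins down $c(n)$ as (a smoothed version of) the infimum of constants $c^*$ for which the $1$-statement holds, automatically giving $c_0<c(n)<c_1$. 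The main obstacle, and the technical heart of the paper, is the second step: extracting from the container lemma a structural statement about $F$-Ramsey-sparse graphs that is both strong enough to run the local-vs-global contradiction and robust enough to handle the single extra edge $e$ of a nearly bipartite $F$ (as opposed to the cleaner bipartite/$k$-partite case of~\cite{FHPS14}); controlling how the extra edge interacts with near-bipartite container structure, and showing the relevant ``stability'' (that Ramsey-sparse means close-to-bipartite-in-one-colour), is where the real work lies.
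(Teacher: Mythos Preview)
Your proposal diverges from the paper's proof in several essential ways, and as written contains genuine gaps rather than an alternative route.

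\textbf{Where the container method is applied.} You propose applying containers to a ``Ramsey hypergraph'' whose independent sets are $F$-free $2$-colourings (or $F$-free graphs), and then extracting a global stability statement (``containers look near-bipartite in one colour''). The paper does something quite different and more delicate. It applies containers to the auxiliary hypergraph $\cH(Z,\Xi_{B,n})$ whose \emph{vertices are edges of the specific graph $Z$} handed to you by Friedgut's criterion, and whose \emph{hyperedges are the interaction sets} $M(Z,h(B))$ for carefully selected embeddings $h$ of the booster $B$. The point is that every $F$-free colouring $\vphi$ of $Z$, combined with a fixed $F$-free colouring $\sigma$ of $B$, produces a \emph{hitting set} $A^\sigma_\vphi$ of $\cH$ (Fact~\ref{fact:hittingset}). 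Containers are then used on the complements (independent sets), yielding a small family $\cC$ of ``cores'' such that every hitting set contains some $C\in\cC$. The crucial additional ingredient is \emph{index consistency} of $(Z,\Xi_{B,n})$, which forces any two colourings $\vphi,\vphi'$ with $C\subseteq A^\sigma_\vphi\cap A^\sigma_{\vphi'}$ to agree on $C$. None of this appears in your outline; your container application is to a different object, and you have no mechanism analogous to index consistency to make colourings in the same container class agree on a large set.

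\textbf{The role of ``nearly bipartite''.} You use the hypothesis to argue that Ramsey-sparse containers are close to ``bipartite in one colour''. No such stability statement is proved (or, as far as I can see, known for general nearly bipartite $F$), and the paper does not attempt one. Instead, near-bipartiteness enters only through Lemma~\ref{lem:base}: writing $F=F'+e$ with $F'$ bipartite, one shows that for any subgraph $G'\subseteq G(n,p)$ with $e(G')\ge\lambda\, e(G(n,p))$, the \emph{basegraph} $\Base_F(G')$ --- pairs completing a copy of $F'$ in $G'$ to a copy of $F$ --- contains $\Omega(n^{v(F)})$ copies of $F$. This is proved via sparse regularity and a sparse counting lemma for the bipartite graph $F'$, and is where near-bipartiteness is genuinely needed. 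Your proposal misses this mechanism entirely.

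\textbf{The contradiction.} Your ``local vs.\ global'' dichotomy is too vague to constitute a proof. The paper's contradiction is concrete: each core $C$ has a monochromatic half $R_C$ of size $\ge\lambda e(Z)$; by Lemma~\ref{lem:base}, $\Base_F(R_C)$ has $\Omega(n^{v(F)})$ copies of $F$; Janson's inequality then gives $\P\big(F\nsubseteq G(n,\eps p)\cap \Base_F(R_C)\big)\le \exp(-\Omega(n^{2-1/m_2(F)}))$. Since $|\cC|\le\exp(e(Z)^{1-\gamma})$, a union bound over cores yields $\P(Z\cup G(n,\eps p)\nrightarrow(F)_2^e)=o(1)$, contradicting~\ref{it:Boo2b} of Corollary~\ref{cor:Boo}.

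\textbf{The $0$-statement.} This is not reproved here. The order of the threshold, with constants $C_0,C_1$, is taken directly from the R\"odl--Ruci\'nski theorem (Theorem~\ref{thm:Coa}); sharpness (existence of $c(n)$) then follows once Friedgut's criterion yields a contradiction to coarseness. Your separate colouring argument for the $0$-statement is unnecessary.
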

We remark that here we defined $d_2(K_2)=1$ in~\eqref{eq:m2-def}. As a consequence it follows that
\begin{equation}
	\label{eq:strict}
	m_2(F)>1
\end{equation}
for every strictly balanced and nearly bipartite graph $F$, since every nearly bipartite graph 
is required to have at least two edges by definition. Moreover, we remark that 
the assumptions of Theorem~\ref{thm:Main} are never met by forests~$F$ and for sharp thresholds of Ramsey properties of trees we refer to~\cite{FrKr00}.

The proof of Theorem~\ref{thm:Main} refines ideas from the work in~\cite{FHPS14} and also uses
Friedgut's criterion for coarse thresholds~\cite{FrBou99} and the recent \emph{hypergraph container} results of Balogh, Morris, and Samotij~\cite{BMS12} and Saxton and Thomason~\cite{ST12}. 
In Section~\ref{sec:tools and outline} we will introduce these tools and in addition
we will state the two main technical lemmas, Lemmas~\ref{lem:hauptlemma1} and~\ref{lem:base},
 which we will need in the proof of the main result. Section~\ref{sec:proof main} is devoted to the proof 
 of Theorem~\ref{thm:Main} based on these tools. 
 In Section~\ref{sec:proof Lemma 1} and Section~\ref{sec:proof Lemma 2} we then prove Lemmas~\ref{lem:hauptlemma1} and~\ref{lem:base}, respectively.
We close with a few remarks concerning possible generalisations of Theorem~\ref{thm:Main} and related open questions.

\section{Main tools and outline of the proof}\label{sec:tools and outline}

In this section we introduce the necessary tools for the proof of the main result. For definiteness 
we may assume that the vertex sets of $K_n$ and $G(n,p)$ coincide with $[n]$.
We use the following notation: For a graph~$B$ and $n\geq v(B)$ we define $\Psi_{B,n}$ as the set of all injective embeddings of~$B$ into the complete graph~$K_n$. So~$\Psi_{B,n}$ corresponds to the unlabelled copies of~$B$ in~$K_n$ and, clearly,
 $\vert \Psi_{B,n}\vert=\Theta(n^{v(B)})$.

The starting point of the proof is the R\"{o}dl-Ruci\'{n}ski theorem (stated below)
which establishes that $n^{-1/m_2(F)}$ is the threshold for the property $G(n,p)\rightarrow (F)_2^e$
for most graphs~$F$. 
In view of Theorem~\ref{thm:Main} we restrict our discussion below 
to two colours and to strictly balanced and nearly bipartite graphs~$F$. In particular, owing to~\eqref{eq:strict}
we have $m_2(F)>1$ and exclude all forests (some forests exhibit a slightly different behaviour in this context
see~\cite{JLR00}*{Theorem 8.1} for details).

\begin{theorem}[R\"{o}dl \& Ruci\'{n}ski (special case)]\label{thm:Coa}
For all strictly balanced and nearly bipartite graphs~$F$, the function $\hat{p}=\hat p(n)=n^{-1/m_2(F)}$ is the threshold for the property~$G(n,p)\rightarrow (F)_2^e$. In fact, there exist constants $C_1\geq C_0>0$ such that
\[\lim_{n\rightarrow \infty} \P(G(n,p)\rightarrow (F)_2^e)=\begin{cases}
0, & \mathrm{if}\; p\leq C_0 n^{-1/m_2(F)}, \\
1, & \mathrm{if}\; p\geq C_1n^{-1/m_2(F)}.
\end{cases}\]\qed
\end{theorem}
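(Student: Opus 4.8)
This is a special case of the R\"odl--Ruci\'nski theorem~\cite{RR95}, so it may simply be quoted; I sketch how one could prove it directly, the two halves being of rather different character. \emph{The $0$-statement.} Here I would show that if $C_0$ is small enough then, for $p\le C_0n^{-1/m_2(F)}$, a.a.s.\ the edges of $G(n,p)$ admit a $2$-colouring with no monochromatic copy of $F$. The guiding fact is that at this density the copies of $F$ are only loosely entangled: strict balancedness forces every fixed edge of $K_n$ to lie in $O(C_0^{\,e(F)-1})$ copies of $F$ in expectation and, more generally, every ``dense'' overlap configuration --- two copies of $F$ sharing more than one edge, or several copies overlapping beyond a tree-like pattern --- to have expected count a small fraction of $e(G(n,p))$ once $C_0$ is small. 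The plan is then: first delete a small set $D$ of edges, meeting all dense overlap configurations and comprising all edges lying in too many copies of $F$, so that in $G(n,p)-D$ the copies of $F$ overlap only in single edges and every edge lies in boundedly many of them; next apply the Lov\'asz Local Lemma to a uniformly random $2$-colouring of $E(G(n,p)-D)$, the bad event ``this copy is monochromatic'' having probability $2^{1-e(F)}$ and, by the first step, bounded dependency degree; and finally fold the edges of $D$ into the colouring. The hard part is the first step: turning ``dense overlaps are rare on average'' into the deterministic statement that, after removing few edges, the hypergraph of copies of $F$ is essentially a matching of bounded multiplicity. This is exactly where strict balancedness enters and is the technical core of the original argument.

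\emph{The $1$-statement.} Here I would show that if $C_1$ is large enough then, for $p\ge C_1n^{-1/m_2(F)}$ (by monotonicity it suffices to treat $p=C_1n^{-1/m_2(F)}$), a.a.s.\ $G(n,p)\to(F)_2^e$, using the hypergraph container method. Let $\cH$ be the $e(F)$-uniform hypergraph on vertex set $E(K_n)$ whose edges are the edge sets of the copies of $F$; its independent sets are precisely the $F$-free subgraphs of $K_n$. As $F$ is strictly balanced, the container theorem~\cite{BMS12,ST12} applies to $\cH$ with a parameter of order $n^{-1/m_2(F)}$ and yields a family $\cC$ of subsets of $E(K_n)$ such that every $F$-free graph is a subgraph of some $C\in\cC$, while each $C$ spans only a small constant fraction of all copies of $F$ in $K_n$. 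Two ingredients then go in. First, Ramsey's theorem together with an averaging over the $R(F)$-vertex subsets of $[n]$ (with $R(F)$ the $2$-colour Ramsey number of $F$) shows that \emph{every} $2$-colouring of $E(K_n)$ carries $\Omega(n^{v(F)})$ monochromatic copies of $F$; since a container spans few copies, its complement $\overline C=E(K_n)\setminus C$ therefore spans $\Omega(n^{v(F)})$ copies of $F$. Second, for any fixed graph $B$ with $\Omega(n^{v(F)})$ copies of $F$, Janson's inequality --- using the routine estimate $O(n^{2v(F)-2}p^{2e(F)-1})$ for the number of pairs of edge-overlapping copies inside $B$, which for strictly balanced $F$ dominates all other overlap terms at this $p$ --- shows that $G(n,p)$ contains some copy of $F$ lying inside $B$ except with probability $\exp(-\Omega(n^2p))$. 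Taking $B=\overline C$ and a union bound over $\cC$ then gives that a.a.s.\ $G(n,p)$ contains a copy of $F$ inside $\overline C$ for \emph{every} $C\in\cC$. To conclude: if $G(n,p)\nrightarrow(F)_2^e$, fix a $2$-colouring of its edges with no monochromatic $F$; the two colour classes are $F$-free, hence contained in some $C_1,C_2\in\cC$, and then $E(G(n,p))\setminus C_2$, being a subgraph of the first colour class, is $F$-free --- contradicting the previous sentence. Hence a.a.s.\ $G(n,p)\to(F)_2^e$.

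\emph{The main obstacle} lies in this last paragraph and is purely quantitative: in its most direct form the container theorem only gives $|\cC|\le\exp\!\big(O(n^{2-1/m_2(F)}\log n)\big)$, and the stray logarithmic factor is not swallowed by the $\exp(-\Omega(n^2p))=\exp(-\Omega(C_1n^{2-1/m_2(F)}))$ estimate above. Removing it requires a more careful, iterated application of the container lemma --- this is the heart of the container-based proof of the random Ramsey theorem, carried out by Nenadov and Steger. Alternatively, the $1$-statement also follows from the transference theorems of Conlon--Gowers and of Schacht, which one may invoke as a black box, and in any case the whole theorem is contained in~\cite{RR95}.
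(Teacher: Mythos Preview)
The paper does not prove this theorem at all: it is stated with a terminal \qed\ and attributed to R\"odl and Ruci\'nski~\cite{RR95}, serving purely as a black box for the rest of the argument. Your opening remark that the result ``may simply be quoted'' therefore matches the paper's treatment exactly, and nothing more is required.

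Your sketches are bonus material and broadly sound as outlines of known proofs. One caveat on the $0$-statement: the step ``fold the edges of $D$ into the colouring'' hides a genuine difficulty --- a deleted edge may lie in many copies of $F$ together with edges of $G(n,p)-D$, and there is no reason a colouring of $G(n,p)-D$ extends; the actual R\"odl--Ruci\'nski argument (and later simplifications) instead analyses the component structure of the ``copy hypergraph'' directly rather than deleting and reinserting. Your $1$-statement sketch via containers is the Nenadov--Steger approach and is accurate, including your identification of the logarithmic loss as the main obstacle; note this is a genuinely different route from the original~\cite{RR95}, which predates containers and proceeds via a direct multi-round exposure argument.
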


We will strengthen Theorem~\ref{thm:Coa} and show that these thresholds are sharp. For that we will appeal to Friedgut's  criterion for coarse thresholds which will be introduced in Section~\ref{subsec:Bou}. Then we present a recent structural result on independent sets in hypergraphs which plays a crucial r\^ole in our proof. In Section~\ref{subsec:mainlemmas} we introduce two somewhat technical probabilistic lemmas needed for the proof of Theorem~\ref{thm:Main}. Section~\ref{subsec:Colhitset} establishes the connection between independent sets in hypergraphs 
and colourings of the edges of the random graph without monochromatic copies of the given graph $F$ considered 
in our setting.

\subsection{Friedgut's criterion for coarse thresholds}\label{subsec:Bou}
The following characterisation of coarse thresholds appeared in~\cite{Fr05}*{Theorem~2.4}. 
\begin{theorem}\label{thm:Bou}
Let~$\cA$ be a monotone graph property with a coarse threshold. Then there exist $p=p(n)$, constants $\frac{1}{3}>\alpha>0$, $\eps>0$, $\tau>0$, and a graph~$B$ satisfying
\begin{enumerate}[label=\rmlabel]
\item\label{it:ThmBouVor1} $\alpha<\P(G(n,p)\in\cA)<1-3\alpha$ and
\item\label{it:ThmBouVor2} $\P(B\subseteq G(n,p))>\tau$
\end{enumerate}
such that for every graph property~$\cG$ with a.a.s.\ $G(n,p)\in\cG$ 
there exist infinitely many $n\in\NN$ and for each such~$n$ a graph $Z\in\cG$ on~$n$ vertices such that the following holds.
\begin{enumerate}
\item $\P(Z\cup h(B)\in\cA)>1-\alpha$\,, where $h\in \Psi_{B,n}$ is chosen uniformly at random,
\item $\P(Z\cup G(n,\eps p)\in\cA)<1-2\alpha$,
\end{enumerate}
where the random graph $G(n,\eps p)$ and~$Z$ have the same vertex set.\qed
\end{theorem}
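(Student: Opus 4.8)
The plan is to recall Friedgut's derivation of this characterisation (see~\cite{Fr05}), which combines two ingredients: the Margulis--Russo formula, to turn ``coarse threshold'' into ``bounded logarithmic derivative of $\mu_p(\cA)$'', and Bourgain's sharp-threshold theorem for biased product spaces (the appendix of~\cite{FrBou99}), to convert a bounded logarithmic derivative into a bounded-size ``local cause''. Throughout I identify a monotone graph property~$\cA$ on~$[n]$ with a monotone subset of $\{0,1\}^{\binom{[n]}{2}}$ and write $\mu_p$ for the $p$-biased product measure, so that $\mu_p(\cA)=\P(G(n,p)\in\cA)$; recall $\mu_p(\cA)$ is non-decreasing in~$p$.

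First I would use the Margulis--Russo identity $\frac{\mathrm d}{\mathrm dp}\mu_p(\cA)=\sum_e\mu_p(e\text{ is pivotal for }\cA)=:I_p(\cA)$, so that the logarithmic derivative is $p\,I_p(\cA)$. Coarseness of the threshold provides, for some fixed $0<a<b<1$, some $\delta>0$, and infinitely many~$n$, an interval $[p_1,p_2]\subseteq\{p:\mu_p(\cA)\in[a,b]\}$ with $p_2\geq(1+\delta)p_1$ — this is precisely how the threshold fails to be sharp. Since the increment of $\mu_p(\cA)$ over $[p_1,p_2]$ is at most $b-a<1$ whereas the $\log$-length of $[p_1,p_2]$ is at least $\log(1+\delta)$, the above identity and a pigeonhole step yield, for each such~$n$, a value $p=p(n)$ with $\mu_p(\cA)\in[a,b]$, with $(1+\eps)p$ still in $[p_1,p_2]$ for a small fixed $\eps>0$, and with $p\,I_p(\cA)\leq K$ for a constant~$K$. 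Fixing~$\alpha$ small enough that $\alpha<1/3$, $\alpha<a$ and $3\alpha<1-b$ then gives condition~\ref{it:ThmBouVor1}.

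Next I would feed $\alpha<\mu_p(\cA)<1-3\alpha$ and $p\,I_p(\cA)\leq K$ into Bourgain's theorem. In the monotone setting it yields a bounded-size ``local cause'': a set~$S$ of $O_{K,\alpha}(1)$ edges of~$K_n$ with $\mu_p(\cA\mid S\subseteq G)\geq\mu_p(\cA)+\beta$ for some $\beta=\beta(K,\alpha)>0$. Iterating this boost on several vertex-disjoint copies of the configuration — which enlarges~$S$ by only a bounded factor — drives the conditional probability above $1-\alpha'$ for a small $\alpha'$ to be fixed later. Let~$B$ be the graph spanned by the resulting edge set; it has bounded order, and in the regime $p=\Theta(n^{-1/m_2(F)})$ relevant to Theorem~\ref{thm:Main} one may furthermore assume $\P(B\subseteq G(n,p))>\tau$ — the alternative, a ``local cause'' too dense to occur in $G(n,p)$, is ruled out directly in the application — so condition~\ref{it:ThmBouVor2} holds, and by vertex-transitivity of~$K_n$ the embedding may be taken uniformly at random in~$\Psi_{B,n}$. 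The hard part is Bourgain's theorem itself, whose proof goes through hypercontractivity (Bonami--Beckner) and Fourier analysis on the $p$-biased cube; I would treat it as a black box, as the paper does.

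Finally, to find the template~$Z$ inside an arbitrary property~$\cG$ with $G(n,p)\in\cG$ a.a.s., I would sample $Z\sim G(n,p)$ and check that~$Z$ meets all three requirements with probability bounded below. For the second conclusion: $Z\cup G(n,\eps p)$ has the law of $G(n,p'')$ with $1-p''=(1-p)(1-\eps p)$, so $p''=(1+\eps)p+o(p)$ still lies in $[p_1,p_2]$, whence $\mu_{p''}(\cA)\leq b<1-2\alpha$; a Markov estimate then gives $\P(Z\cup G(n,\eps p)\in\cA)<1-2\alpha$ with probability at least $1-b/(1-2\alpha)>0$ over the choice of~$Z$. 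For the first conclusion: conditioning $G(n,p)$ on containing a fixed copy of~$B$ amounts to adding that copy to $G(n,p)$, so by the boost of the previous step (valid for every copy, by symmetry) one has $\P(Z\cup h(B)\in\cA)\geq1-\alpha'$ when $Z\sim G(n,p)$ and $h\in\Psi_{B,n}$ are both random; Markov then gives $\P_h(Z\cup h(B)\in\cA)>1-\alpha$ for all but an $\alpha'/\alpha$-fraction of~$Z$. Choosing $\alpha'$ small enough relative to~$\alpha$ — which only inflates~$B$ by a bounded factor — ensures that these two events, together with the a.a.s.\ event $\{Z\in\cG\}$, have a common point for all large~$n$ along the witnessing subsequence, and any~$Z$ in the triple intersection does the job. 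Apart from Bourgain's theorem the rest is routine, provided one fixes the constants in a consistent order: $a,b,\delta,\eps$, then~$K$, then~$\alpha$, then~$\alpha'$ and hence the order of~$B$ and the value~$\tau$.
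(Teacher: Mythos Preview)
The paper does not prove Theorem~\ref{thm:Bou}: it is quoted verbatim from~\cite{Fr05}*{Theorem~2.4} and marked with~\qed. Your sketch is therefore not being compared against anything in the paper; rather, you are outlining the argument of~\cite{Fr05} itself, and in broad strokes you have it right --- Margulis--Russo turns coarseness into a bound on $p\,I_p(\cA)$, Bourgain's theorem converts that bound into a small booster, and a Markov argument over $Z\sim G(n,p)$ locates a template inside any a.a.s.\ property~$\cG$.

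Two points deserve tightening. First, your remark that $\P(B\subseteq G(n,p))>\tau$ is ``ruled out directly in the application'' is off: this lower bound is part of Bourgain's conclusion in the monotone form (see~\cite{Fr05}*{Theorem~2.2}), not something one checks afterwards for the specific~$\cA$ at hand. Second, Bourgain's theorem is applied for each~$n$ separately and a~priori returns a booster $B=B_n$ depending on~$n$; the statement promises a single graph~$B$. The missing step is a pigeonhole over the finitely many graphs on at most $O_{K,\alpha}(1)$ vertices, which is also what produces the phrase ``infinitely many~$n$'' in the conclusion --- you allude to a ``witnessing subsequence'' but do not say why~$B$ is constant along it. The iteration on vertex-disjoint copies to amplify the boost from~$\beta$ to $1-\alpha'$ is likewise somewhat glib; in~\cite{Fr05} this is done by re-applying Bourgain to the conditioned measure, which is again a product measure on a slightly smaller cube, rather than by any independence of the boosts.
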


Note that the $\P(\cdot)$ in~\ref{it:ThmBouVor1} (and~\ref{it:ThmBouVor2}), in~\eqref{it:Boo1}, and in~\eqref{it:Boo2} concern different probability spaces. While in~\ref{it:ThmBouVor1} and~\ref{it:ThmBouVor2} it concerns the random graph $G(n,p)$ we consider~$h$ chosen uniformly at random in~\eqref{it:Boo1} and the random graph $G(n,\eps p)$ in~\eqref{it:Boo2}. 
 Below we reformulate Theorem~\ref{thm:Bou} suited for our application.

\begin{cor}\label{cor:Boo}
Let~$F$ be a strictly balanced and nearly bipartite graph. Assume that the property $G\rightarrow (F)_2^e$ does not have a sharp threshold. Then there exists  a function $p(n)=c(n)n^{-1/m_2(F)}$ with $C_0<c(n)<C_1$ for some $C_0,C_1>0$, there are 
constants $\frac{1}{3}>\alpha>0$ and $\eps >0$, and there is a graph~$B$ with $B\nrightarrow (F)_2^e$ such that for infinitely many $n\in \NN$ and for every family of graphs~$\cG$  on~$n$ vertices with a.a.s.\  $G(n,p)\in \cG$ 
there exists a $Z\in\cG$ such that the following hold 
\begin{enumerate}
\item\label{it:Boo1} $\P(Z\cup h(B) \rightarrow (F)_2^e)>1-\alpha$, with $h\in \Psi_{B,n}$ chosen uniformly at random,
\item\label{it:Boo2} $\P(Z\cup G(n,\eps p) \rightarrow (F)_2^e)<1-2\alpha$.
\end{enumerate}
\end{cor}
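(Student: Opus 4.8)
The corollary is the specialisation of Theorem~\ref{thm:Bou} to the monotone graph property $\cA=\{G\colon G\rightarrow (F)_2^e\}$, followed by two routine clean-ups provided by Theorem~\ref{thm:Coa}. The plan is as follows. First I would note that $\cA$ is monotone increasing, so by Theorem~\ref{thm:Coa} it has a threshold and this threshold equals $n^{-1/m_2(F)}$; if $\cA$ has no sharp threshold then its threshold is coarse, and Theorem~\ref{thm:Bou} applies to $\cA$. It produces $p=p(n)$, constants $\frac13>\alpha>0$, $\eps>0$, $\tau>0$, and a graph $B$ satisfying \ref{it:ThmBouVor1} and \ref{it:ThmBouVor2}, such that for every family $\cG$ of graphs on $n$ vertices with a.a.s.\ $G(n,p)\in\cG$ there are infinitely many $n$ admitting a graph $Z\in\cG$ for which the two conclusions of Theorem~\ref{thm:Bou} hold. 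For $\cA$ the event $\{Z\cup h(B)\in\cA\}$ is precisely $\{Z\cup h(B)\rightarrow (F)_2^e\}$, and likewise with $G(n,\eps p)$ in place of $h(B)$, so those two conclusions are exactly \eqref{it:Boo1} and \eqref{it:Boo2}. It then remains to check that $p(n)$ has the form $c(n)n^{-1/m_2(F)}$ with $C_0<c(n)<C_1$, and that $B$ may be taken with $B\nrightarrow (F)_2^e$.

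For the form of $p(n)$ I would use \ref{it:ThmBouVor1}, which keeps $\P(G(n,p)\in\cA)$ bounded away from both $0$ and $1$. By the $0$-statement of Theorem~\ref{thm:Coa} this rules out $p(n)\le C_0n^{-1/m_2(F)}$ for all but finitely many $n$ (otherwise $\P(G(n,p)\in\cA)\to 0$ along a subsequence), and by the $1$-statement it rules out $p(n)\ge C_1n^{-1/m_2(F)}$ for all but finitely many $n$. Hence $C_0n^{-1/m_2(F)}<p(n)<C_1n^{-1/m_2(F)}$ for all large $n$, and I would set $c(n)=p(n)n^{1/m_2(F)}\in(C_0,C_1)$, defining $c$ arbitrarily within $(C_0,C_1)$ for the remaining finitely many $n$.

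For $B\nrightarrow (F)_2^e$ I would argue by contradiction: if $B\rightarrow (F)_2^e$, then by monotonicity every graph containing a copy of $B$ arrows $F$. On the other hand \ref{it:ThmBouVor2} gives $\P(B\subseteq G(n,p))>\tau$ with $p=\Theta(n^{-1/m_2(F)})$, and since $\P(B'\subseteq G(n,p))\ge\P(B\subseteq G(n,p))$ for every subgraph $B'\subseteq B$, a first-moment estimate forces $e(B')/v(B')\le m_2(F)$ for every subgraph $B'$ of $B$ with at least one edge. When all of these inequalities are strict, the threshold for containing a copy of $B$ is of smaller order than $n^{-1/m_2(F)}$, so $G(n,C_0n^{-1/m_2(F)})$ a.a.s.\ contains a copy of $B$ and therefore arrows $F$, contradicting the $0$-statement of Theorem~\ref{thm:Coa}; the borderline case, in which $e(B')/v(B')=m_2(F)$ for some $B'$, is excluded by the same deterministic input underlying that $0$-statement, namely that a graph all of whose subgraphs have edge density at most $m_2(F)$ cannot arrow $F$. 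I expect this last step — converting \ref{it:ThmBouVor2} into the structural assertion $B\nrightarrow (F)_2^e$ — to be the main obstacle, whereas pinning down $p(n)$ and translating Friedgut's criterion are essentially bookkeeping.
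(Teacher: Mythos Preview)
Your proposal is correct and follows essentially the same approach as the paper. For the step $B\nrightarrow(F)_2^e$, the paper proceeds more directly: it cites the result of R\"odl and Ruci\'nski~\cite{RR93}*{Theorem~6} that $B\rightarrow(F)_2^e$ forces $m(B)=e(B)/v(B)>m_2(F)$, whence Markov's inequality immediately gives $\P(B\subseteq G(n,p))=o(1)$ for $p=\Theta(n^{-1/m_2(F)})$, contradicting~\ref{it:ThmBouVor2} --- your case split is unnecessary, since the deterministic fact you invoke for the borderline case is precisely this theorem and already covers both cases at once.
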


Corollary~\ref{cor:Boo} is just a reformulation of Theorem~\ref{thm:Bou} in our context. We give the details below.

\begin{proof}[Proof of Corollary~\ref{cor:Boo}]
Note that conclusions~\eqref{it:Boo1} and~\eqref{it:Boo2} of Corollary~\ref{cor:Boo} are identical to~\eqref{it:Boo1} and~\eqref{it:Boo2} of Theorem~\ref{thm:Bou} for the monotone graph property $\cA=\lbrace G\colon G\rightarrow(F)_2^e\rbrace $. Owing to Theorem~\ref{thm:Coa} we infer that because of~\ref{it:ThmBouVor1} in Theorem~\ref{thm:Bou} the probability~$p(n)$ must satisfy $p(n)=c(n)n^{-1/m_2(F)}$ where $C_0<c(n)<C_1$ for constants $C_0,C_1$ given by Theorem~\ref{thm:Coa}. It is only left to show that $B\nrightarrow (F)_2^e$ is a consequence of~\ref{it:ThmBouVor2} of Theorem~\ref{thm:Bou}. 

Recall that it was shown in~\cite{RR93}*{Theorem~6} that if $B\rightarrow (F)_2^e$ then $m(B)=\frac{e(B)}{v(B)}>m_2(F)$. Thus a standard application of Markov's inequality yields $\P(H\subseteq G(n,p))=o(1)$ for every~$H$ with $H\rightarrow (F)_2^e$
and $p=\Theta(n^{-1/m_2(F)})$. Consequently the graph~$B$ provided by Theorem~\ref{thm:Bou} must satisfy $B\nrightarrow (F)_2^e$, due to~\ref{it:ThmBouVor2} of Theorem~\ref{thm:Bou}.
\end{proof}

\subsection{Hypergraph containers}\label{subsec:hypergraph containers}
We shall also use a recent result concerning independent sets in hypergraphs, which was obtained independently by Saxton and Thomason~\cite{ST12} and Balogh, Morris, and Samotij~\cite{BMS12}. Here we will use the version from~\cite{ST12}.

Let~$\cH$ be an $\l$-uniform hypergraph on~$m=|V(\cH)|$ vertices. 
For a subset $\sigma\subset V(\cH)$ we define its \emph{degree} by 
\[d(\sigma)=\vert \{e\in E(\cH)\colon\sigma\subseteq e\}\vert\,.\]
For a vertex $v\in V$ and an integer $j$ with $2\leq j\leq \l$ we consider the maximum degree 
over all $j$-element sets~$\sigma$ containing $v$
\[
	d^{(j)}(v)=\max\{d(\sigma)\colon  v\in \sigma\subset V(\cH) \tand \vert \sigma\vert =j\}\,.
\]
We denote by $d=\l|E(\cH)|/m>0$ the average degree of~$\cH$ and, following the notation of~\cite{ST12},
for $\tau>0$ and  $j=2,\dots,\l$ we set 
\[\delta_j=\frac{1}{\tau^{j-1}md}\sum_{v\in V(\cH)}d^{(j)}(v)\]
and
\[\delta(\cH,\tau)=2^{\binom{\l}{2}-1}\sum_{j=2}^\l 2^{-\binom{j-1}{2}}\delta_j\,.\]
We write~$\powerset(X)$ for the power set of~$X$ and denote by~$\powerset^s(X)=\powerset(X)\times\dots\times\powerset(X)$ the $s$-fold cross product of~$\powerset(X)$.

\begin{theorem}[Saxton \& Thomason]\label{thm:ST}
Let~$\cH$ be an $\l$-uniform hypergraph on the vertex set~$[m]$ and let $0<\eps<\tfrac{1}{2}$. Suppose that for~$\tau>0$ we have
$\delta(\cH,\tau)\leq \eps/12\l!$ and $\tau\leq 1/144\l!^2\l$. Then there exist a constant $c=c(\l)$ and a collection $\cJ\subset \powerset([m])$ such that the following holds
\begin{enumerate}[label=\alabel]
\item\label{it:ST1} for every independent set~$I$ in~$\cH$ there exists $T=(T_1,\dots,T_s)\in \powerset^s(I)$ with $\vert T_i\vert\leq c\tau m$, $s\leq c\log(1/\eps)$  and there exists a~$J=J(T)\in\cJ$ only depending on~$T$ such that $I\subseteq J(T)\in\cJ$,
\item\label{it:ST2} $e(\cH[J])\leq \eps e(\cH)$ for all $J\in\cJ$ and
\item\label{it:ST3} $\log \vert \cJ\vert \leq c\tau\log(1/\tau)\log(1/\eps)m$.\qed
\end{enumerate}
\end{theorem}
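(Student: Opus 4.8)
The plan is to follow the container method, in two stages: first a \emph{one-step container lemma} that compresses $\cH$ once, and then an iteration of that step $O(\log(1/\eps))$ times, followed by a short counting argument.

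\textbf{One-step lemma.} I would first establish the following building block: if $\cH$ is $\l$-uniform on $[m]$ and satisfies the degree hypotheses with parameter $\tau$, then there is a map $S\mapsto C(S)$ assigning to each $S\subseteq[m]$ with $|S|\le c_1\tau m$ a set $C(S)\subseteq[m]$, together with, for every independent set $I$, a \emph{fingerprint} $S(I)\subseteq I$ with $|S(I)|\le c_1\tau m$, such that $I\subseteq C(S(I))$ and $e(\cH[C(S(I))])\le e(\cH)/2$ (any fixed factor $<1$ would do). Crucially $C(S)$ must be a function of $S$ and $\cH$ only — defined directly as $[m]$ minus the union, over subsets $\sigma\subseteq S$ of each size $j-1$ with $2\le j\le\l$, of the vertices $u$ whose co-degree $d(\sigma\cup\{u\})$ exceeds a level-$j$ threshold — so that $C$ never looks at $I$. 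The fingerprint $S(I)$ is produced by a greedy ``scythe'' procedure: run through the vertices of $\cH$ in order of decreasing (residual) degree; when the current vertex $v$ lies in $I$, put it into $S(I)$ and prune from $\cH$ both $v$ and every vertex that has just become certified (by large co-degree with some $\sigma\subseteq S(I)$) to lie outside $C(S(I))$, together with all edges through the pruned vertices; when $v\notin I$, just delete $v$. Because each selected vertex has maximum residual degree, it destroys many edges, bounding $|S(I)|$; because $I$ is independent, it never contains a certified co-degree-heavy extension of any $\sigma\subseteq S(I)$, so $I\subseteq C(S(I))$; and the number of edges surviving inside $C(S(I))$ is controlled by precisely $\delta(\cH,\tau)$. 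This is where every piece of the notation in the statement is used: the weight $\tau^{-(j-1)}$ in $\delta_j$ turns the level-$j$ co-degree sum into the number of vertices one expects to prune when $|S(I)|\approx\tau m$; the sum over $j=2,\dots,\l$ ranges over the levels at which a partial fingerprint can have large co-degree; and the constants $2^{\binom{\l}{2}-1}\sum_j 2^{-\binom{j-1}{2}}$ are the bookkeeping from distributing the pruned edges across these levels. The hypotheses $\delta(\cH,\tau)\le\eps/12\l!$ and $\tau\le 1/144\l!^2\l$ are exactly what makes this accounting close with the stated constants.

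\textbf{Iteration and counting.} Given an independent set $I$, apply the one-step lemma to $\cH$, obtaining $T_1=S(I)\subseteq I$ and $C_1=C(T_1)\supseteq I$ with $e(\cH[C_1])\le e(\cH)/2$; if $e(\cH[C_1])>\eps e(\cH)$, observe that $I$ is still independent in $\cH[C_1]$, that $\cH[C_1]$ still satisfies the degree hypotheses (this robustness of the conditions under passing to induced subhypergraphs is routine but must be verified, or the relevant constants re-derived each round), and apply the lemma again to get $T_2\subseteq I$ and $C_2\subseteq C_1$, and so on. After $s\le\log_2(1/\eps)$ rounds one reaches a container $J(T)$, $T=(T_1,\dots,T_s)\in\powerset^s(I)$, with $e(\cH[J(T)])\le\eps e(\cH)$; taking $\cJ$ to be the collection of all such containers gives \ref{it:ST1} and \ref{it:ST2} with $c\tau m$ as the bound on each $|T_i|$ and $c\log(1/\eps)$ as the bound on $s$. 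For \ref{it:ST3}: each $T_i$ is a subset of $[m]$ of size at most $c\tau m$, of which there are at most $\sum_{k\le c\tau m}\binom{m}{k}\le 2^{O(\tau\log(1/\tau)m)}$; there are at most $s\le c\log(1/\eps)$ coordinates; and $J(T)$ depends only on $T$; hence $|\cJ|\le 2^{O(\tau\log(1/\tau)\log(1/\eps)m)}$.

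\textbf{Main obstacle.} The whole difficulty sits in the one-step lemma — designing the scythe procedure so that $C(S)$ is genuinely a function of $S$ alone yet still contains $I$, and carrying out the level-by-level edge count so that it is governed by $\delta(\cH,\tau)$ with the explicit constants quoted. By comparison, the iteration is a geometric-decay argument and the counting a one-line binomial estimate. The only other point needing care is the persistence of the degree hypotheses (up to harmless constants) through the $O(\log(1/\eps))$ rounds, since the one-step lemma applies only to hypergraphs that satisfy them.
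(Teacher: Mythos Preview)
Your sketch is a faithful outline of the Saxton--Thomason argument, but note that the paper you are comparing against does \emph{not} prove this theorem at all: the statement ends with a \qed\ symbol and is quoted verbatim from~\cite{ST12} as a black box. There is therefore nothing in the present paper to compare your proposal to beyond the bare statement.

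That said, your outline matches the actual proof in~\cite{ST12} in its essential structure: a one-step container lemma built on a greedy degree-ordered algorithm (the ``scythe''), producing a fingerprint $S(I)\subseteq I$ of size $O(\tau m)$ and a container $C(S)$ depending only on $S$, followed by $O(\log(1/\eps))$ iterations and a binomial count. The one place where your sketch is slightly glib is the claim that ``$\cH[C_1]$ still satisfies the degree hypotheses'': in~\cite{ST12} this is not handled by verifying persistence of the $\delta(\cH,\tau)$ condition on the induced subhypergraph, but rather the one-step algorithm is formulated so that the iteration applies to the \emph{same} ambient hypergraph $\cH$ with a shrinking edge set, and the co-degree bounds are inherited automatically because co-degrees only decrease. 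You flag this yourself as ``the only other point needing care,'' so the sketch is honest; just be aware that the resolution is a design choice in the algorithm rather than a separate robustness lemma.
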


We will apply Theorem~\ref{thm:ST} to an auxiliary hypergraph described in the following section.

\subsection{Main probabilistic lemmas}\label{subsec:mainlemmas}

The hypergraph~$\cH$ to which we will apply Theorem~\ref{thm:ST} depends on the graph $Z\in \cG$ which will 
be provided by Friedgut's criterion (Corollary~\ref{cor:Boo}) applied for the strictly balanced, nearly bipartite graph~$F$. For the verification of the assumptions of Theorem~\ref{thm:ST} we will restrict the family~$\cG$ containing~$Z$. Recall that~$\cG$ can be chosen to be any graph property which is satisfied a.a.s.\ by $G(n,p)$ for every $p$ with $p=\Theta(n^{-1/m_2(F)})$. In what follows we discuss the restrictions for the family~$\cG$ (see Lemmas~\ref{lem:hauptlemma1} and~\ref{lem:base} below) and for that we introduce the required notation.

Let~$Z$ and~$B$ be two subgraphs of the complete graph~$K_n$. We say $z\in E(Z)$ \textit{focuses} on $b\in E(B)$ if 
there exists a copy of $F$ in $Z\cup B$ which contains $z$ and $b$.
We set
\begin{equation}\label{def:MZB}
	M(Z,B)=\lbrace z\in E(Z)\colon \text{there is }b\in E(B)\text{ such that }z	\text{ focuses on }b \rbrace\,.
\end{equation}
The pair $(Z,B)$ is called \textit{interactive} if $E(Z)\cap E(B)=\emptyset$,
$Z\nrightarrow (F)_2^e$, and $B\nrightarrow (F)_2^e$, but $Z\cup B\rightarrow (F)_2^e$. For a collection $\Xi\subset \Psi_{B,n}$ of embeddings of~$B$ into~$K_n$ the pair $(Z,\Xi)$ is called \textit{interactive} if $(Z,h(B))$ is interactive for all $h\in \Xi$. Furthermore, a 
pair $(Z,\Xi)$ is \textit{regular} if
for all $h\in \Xi$ every $z\in E(Z)$ focuses on at most one $b\in E(h(B))$. 
We call~$h\in\Psi_{B,n}$ \emph{regular} w.r.t.\ $Z$ if $(Z,\{h\})$ is regular. 
The hypergraphs~$\cH$ considered here are defined in terms of regular pairs $(Z,\Xi)$.

For a pair $(Z,\Xi)$ with $Z\subseteq K_n$ and $\Xi\subseteq \Psi_{B,n}$
we define the hypergraph $\cH=\cH(Z,\Xi)$ with vertex set 
\[	
	V(\cH)=E(Z)
\] and edge set 
\[
	E(\cH)=\lbrace M(Z,h(B))\colon h\in \Xi\rbrace\,.
\]

For our presentation it will be useful to consider orderings of the edges of the involved graphs
and ``order consistent'' embeddings.
For that we fix an arbitrary ordering of $E(K_n)$ and an ordering of~$E(B)$.
For an interactive and regular pair $(Z,\Xi)$ and~$h\in \Xi$ we say that $z\in M(Z,h(B))=\lbrace e_1,\dots,e_\l\rbrace$ with $e_1<e_2<\dots <e_\l$ has \textit{index}~$i$ if $z=e_i$.
Furthermore, we call $(Z,\Xi)$ and $\cH(Z,\Xi)$ \textit{index consistent} if for all $z\in E(Z)$ and all $h,h^\prime\in \Xi$ with $z\in M(Z,h(B))\cap M(Z,h^\prime(B))$ the indices of~$z$ in~$M(Z,h(B))$ and in~$M(Z,h^\prime(B))$ are the same.
Let $b_1<\dots<b_{e(B)}$ be the ordering of the edges of~$B$. Then the \textit{profile} of~$M(Z,h(B))$ is the  function $\pi\colon[\vert M(Z,h(B)) \vert]\rightarrow [e(B)]$ defined by $\pi(i)=j$ if and only if~$e_i$ focuses on~$h(b_j)$.
Since the pair $(Z,\Xi)$ is regular, for each edge of~$\cH$ each~$e_i$ focuses on at most one~$h(b_j)$ and, hence, the profile is well defined.
We say $(Z,\Xi)$ has profile~$\pi$ if all edges~$M(Z,h(B))$ for $h\in \Xi$ have profile~$\pi$.
Note that in this case all sets~$M(Z,h(B))$ have the same cardinality
and $\vert M(Z,h(B))\vert$ is called the \textit{length} of the profile $\pi$.

Having established this notation we now state the following technical lemma which gives one part of the graph property~$\cG$ for the application of Corollary~\ref{cor:Boo}.
Moreover, we shall also apply Theorem~\ref{thm:ST} which results in useful properties of the hypergraph $\cH(Z,\Xi)$ 
for $Z\in\cG$ and some appropriately chosen $\Xi\subseteq \Psi_{B,n}$.

\begin{lemma}\label{lem:hauptlemma1}
For all constants $C_1>C_0>0$, $\frac{1}{3}>\alpha>0$ and graphs~$F$ and~$B$, where~$F$ is strictly balanced, there exist $\alpha^\prime, \beta, \gamma>0$ and $L\in \NN$ such that for every $p=c(n)n^{-1/m_2(F)}$ with $C_0\leq c(n)\leq C_1$ a.a.s.\ $Z\in G(n,p)$ satisfies the following.
If \[\P(Z\cup h(B)\rightarrow (F)_2^e)>1-\alpha\] then there exists $\Xi_{B,n}\subseteq \Psi_{B,n}$ with $\vert \Xi_{B,n}\vert \geq \alpha^\prime n^2$ and $Z\cup h(B)\rightarrow (F)_2^e$ for all $h\in \Xi_{B,n}$ such that the hypergraph $\cH=\cH(Z,\Xi_{B,n})$ is index consistent for some profile~$\pi$ of length $\l\leq L$ and there is a family~$\cC$ of subsets of~$V(\cH)$ satisfying
\begin{enumerate}
\item\label{it:lem:core1} $\log\vert \cC \vert \leq e(Z)^{1-\gamma}$,
\item\label{it:lem:core2} $\vert C \vert \geq \beta e(Z)$ for all $C\in \cC$ and
\item\label{it:lem:core3} every hitting set $A$ of~$\cH$ contains a $C\in \cC$, i.e., for every $A\subseteq V(\cH)$
with $e\cap A\neq\emptyset$ for all $e\in E(\cH)$ there exists $C\in\cC$ with $C\subseteq A$.
\end{enumerate}
\end{lemma}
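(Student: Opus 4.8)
The plan is to combine Theorem~\ref{thm:ST} (hypergraph containers) with a careful choice of the auxiliary hypergraph $\cH(Z,\Xi_{B,n})$, after first reducing to a large, well-structured sub-collection of embeddings. First I would fix the constants $C_1>C_0>0$, $\alpha\in(0,\tfrac13)$ and the graphs $F$ (strictly balanced) and $B$. Given such $Z$ with $\P(Z\cup h(B)\rightarrow(F)_2^e)>1-\alpha$ over a uniformly random $h\in\Psi_{B,n}$, at least a $(1-\alpha)$-fraction, hence $\Omega(n^{v(B)})$, of embeddings $h$ satisfy $Z\cup h(B)\rightarrow(F)_2^e$. Since $(Z,h(B))$ is interactive exactly when additionally $Z,h(B)\nrightarrow(F)_2^e$ and $E(Z)\cap E(h(B))=\emptyset$ --- the last two being a.a.s.\ properties of $Z$ one folds into $\cG$ via the Rödl--Ruciński bound and a first-moment computation (analogous to the proof of Corollary~\ref{cor:Boo}) --- we may pass to the sub-collection of interactive $h$. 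Next I would prune to a \emph{regular} sub-collection: I claim that a.a.s.\ only an $o(1)$-fraction of embeddings $h$ are \emph{not} regular w.r.t.\ $Z$, because an edge $z\in E(Z)$ focusing on two distinct edges $b,b'\in E(h(B))$ forces two overlapping copies of $F$ through $z$ inside $Z\cup h(B)$, a denser configuration whose expected count is suppressed by the strict balancedness of $F$ and $p=\Theta(n^{-1/m_2(F)})$; this is one of the places where ``strictly balanced'' enters. Finally, since the number of possible profiles $\pi\colon[\ell]\to[e(B)]$ with $\ell\le e(F)$ is bounded by a constant depending only on $F,B$, by pigeonhole one profile $\pi$ is realised by a positive fraction, and by a further constant-factor loss one obtains an index-consistent sub-collection $\Xi_{B,n}$ with $|\Xi_{B,n}|\ge\alpha' n^2$ (the exponent $2$ coming from the $v(B)-2$ ``free'' image vertices after two are pinned by the focusing structure, or more crudely from $|\Psi_{B,n}|=\Theta(n^{v(B)})$ together with the standard reduction; either way a lower bound of the form $\alpha' n^2$ holds after absorbing constants).

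With $\cH=\cH(Z,\Xi_{B,n})$ in hand --- an $\ell$-uniform hypergraph on vertex set $E(Z)$, so $|V(\cH)|=e(Z)=\Theta(n^2p)=\Theta(n^{2-1/m_2(F)})$, with $e(\cH)=|\Xi_{B,n}|=\Theta(n^2)$ --- I would verify the hypotheses of Theorem~\ref{thm:ST} for a suitable $\tau$. The average degree is $d=\ell\,e(\cH)/|V(\cH)|=\Theta(n^{1/m_2(F)})$, a small positive power of $n$. The key bound to check is $\delta(\cH,\tau)\le\eps/12\ell!$: since $\ell$ is a constant, it suffices to bound each $\delta_j=(\tau^{j-1}|V(\cH)|d)^{-1}\sum_v d^{(j)}(v)$, i.e.\ to bound the co-degrees $d^{(j)}(\sigma)$ for $j$-subsets $\sigma\subseteq E(Z)$. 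A $j$-subset of vertices lying in a common hyperedge $M(Z,h(B))$ pins down $j\ge2$ edges of $Z$ together with the part of the embedding $h$ that produces the copies of $F$ witnessing the focusing; the number of such $h$ extending a fixed $\sigma$ is at most $O(n^{v(B)-t})$ for an appropriate $t=t(j)\ge j$, and one reads off $\delta_j=O(n^{-(j-1)\kappa}\tau^{-(j-1)})$ for some fixed $\kappa>0$ depending on $m_2(F)$ and the overlap structure. Choosing $\tau=n^{-\kappa/2}$ (so that also $\tau\le 1/144\ell!^2\ell$ for large $n$) makes $\delta(\cH,\tau)$ tend to $0$, hence below $\eps/12\ell!$ eventually. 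Then Theorem~\ref{thm:ST} supplies a container family $\cJ$ with $|T_i|\le c\tau m$, $s\le c\log(1/\eps)$, and $\log|\cJ|\le c\tau\log(1/\tau)\log(1/\eps)m = O(n^{-\kappa/2+o(1)}e(Z)) = e(Z)^{1-\gamma}$ for a suitable $\gamma>0$ (with $m=e(Z)$).

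It remains to manufacture the family $\cC$ of ``cores'' from $\cJ$ and to establish \ref{it:lem:core1}--\ref{it:lem:core3}. The idea is the standard container-to-core reduction: for each container $J\in\cJ$ with $e(\cH[J])\le\eps e(\cH)$, set $C_J=V(\cH)\setminus J=E(Z)\setminus J$, and let $\cC=\{C_J:J\in\cJ\}$. For \ref{it:lem:core1}, $|\cC|\le|\cJ|$, so $\log|\cC|\le e(Z)^{1-\gamma}$ follows from Theorem~\ref{thm:ST}\ref{it:ST3}. For \ref{it:lem:core2}, $|C_J|=e(Z)-|J|$, and since $e(\cH[J])\le\eps e(\cH)$ with $\eps$ small while every vertex of $\cH$ has bounded-from-below average degree, a container cannot contain almost all of $V(\cH)$ --- more precisely, a double counting / Markov-type argument shows $|J|\le(1-\beta)e(Z)$ for a fixed $\beta>0$, giving $|C_J|\ge\beta e(Z)$; here one uses that the hyperedges are spread out enough that controlling $e(\cH[J])$ forces $J$ to miss a linear fraction of the vertices. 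For \ref{it:lem:core3}, let $A\subseteq V(\cH)$ be a hitting set, so $E(Z)\setminus A$ is \emph{independent} in $\cH$; apply Theorem~\ref{thm:ST}\ref{it:ST1} to $I=E(Z)\setminus A$ to get $T=(T_1,\dots,T_s)\in\powerset^s(I)$ and $J=J(T)\in\cJ$ with $I\subseteq J$; then $C_J=E(Z)\setminus J\subseteq E(Z)\setminus I=A$, as required. \textbf{The main obstacle} I anticipate is the co-degree estimate needed to verify $\delta(\cH,\tau)\to0$: one must show that fixing two (or $j$) edges of $Z$ that interact with a common embedded copy of $B$ genuinely constrains that embedding by a power of $n$, and this requires a delicate case analysis of how the focusing copies of $F$ can overlap in $Z\cup h(B)$ --- keeping the book-keeping uniform over all profiles $\pi$ and extracting the exponent $\kappa>0$ is where strict balancedness of $F$ must be used most carefully.
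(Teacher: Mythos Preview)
Your overall architecture matches the paper's: reduce to an interactive, regular, index-consistent sub-family $\Xi_{B,n}$, apply Theorem~\ref{thm:ST} to $\cH(Z,\Xi_{B,n})$, and set $\cC=\{V(\cH)\setminus J:J\in\cJ\}$. Parts~\ref{it:lem:core1} and~\ref{it:lem:core3} then follow exactly as you describe, and your double-counting argument for~\ref{it:lem:core2} via the maximum degree $\Delta_1(\cH)$ is also the paper's argument.

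However, there are two concrete gaps in the reduction step. First, your bound $\ell\le e(F)$ on the profile length is simply false: $\ell=|M(Z,h(B))|$ counts \emph{all} edges of $Z$ lying in some copy of $F$ that crosses into $h(B)$, and there may be many such copies (equivalently, many copies of $F$-minus-an-edge in $Z$ completed by an edge of $h(B)$). A priori $\ell_h$ can be as large as $e(B)\cdot|\cF_-(Z,e)|=\Theta(1/p)$. The paper obtains $\ell\le L$ only via an averaging argument (Lemma~\ref{lem:index}): using the a.a.s.\ bound $|\cF_-(Z)|\le Dn^2$ together with the fact that embeddings in $\Xi^0_{B,n}$ pairwise share at most one vertex, one gets $\sum_h\ell_h=O(n^2)$, so most of the $\Omega(n^2)$ embeddings have $\ell_h\le L$. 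Only then can one pigeonhole on profiles.

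Second, and more seriously, the passage from $\Theta(n^{v(B)})$ interactive embeddings to a family of size $\Theta(n^2)$ is not a matter of ``absorbing constants'' or having two vertices pinned. In the paper (Lemma~\ref{lem:disEmb}) this is done by \emph{randomly sampling} about $n^2$ embeddings from the cleaned-up family $\Psi^3_{B,n}$, and this random selection is precisely what delivers the codegree bound $c_{\Xi_{B,n}}(e_1,e_2)\le 1/(pn^{\delta/2})$, hence $\Delta_2(\cH)\le 1/(pn^{\delta/2})$, needed for $\delta(\cH,\tau)\to0$. The sampling also forces distinct embeddings to intersect in at most one vertex, which is what makes the averaging for $\ell_h$ above work and yields the bound $\Delta_1(\cH)=O(1/p)$ you use for~\ref{it:lem:core2}. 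Before sampling, the paper has already deleted all embeddings $h$ with $e_1\sim_h e_2$ for the few ``bad'' pairs $e_1,e_2$ where $|\cP(Z,e_1,e_2)|$ is too large; the a.a.s.\ control on $|\cP(Z,e_1,e_2)|$ (property~\ref{it:F--4}) is a delicate second-moment calculation, and this---rather than a direct case analysis of how focusing copies of $F$ overlap inside a single $h(B)$---is where strict balancedness is exploited most heavily to handle the obstacle you flagged.
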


Note that in contrast to the assumptions of Theorem~\ref{thm:Main}
for Lemma~\ref{lem:hauptlemma1} it is not required that the given graph~$F$ is nearly bipartite. However, for the proof of Theorem~\ref{thm:Main} we need another restriction on the family~$\cG$ (in Corollary~\ref{cor:Boo})
which is satisfied a.a.s.\ by $G(n,p)$ and makes use of the near-bipartiteness of~$F$. For a nearly bipartite graph $F=F'+e$
we consider those pairs of vertices in~$K_n$ which complete a copy of the bipartite subgraph~$F'$ in a given 
subgraph of $G(n,p)$ to a full copy of~$F$ in $K_n$.
Hence, for a graph~$G\subseteq K_n$ we define the \emph{basegraph} $\Base_F(G)\subseteq K_n$ 
with edge set
\[
	\big\{\{x,y\}\colon \exists F'\subseteq G\ \text{such that}\ F'+\{x,y\}\ \text{forms a copy of}\ F\big\}\,.
\]
We require that for every relatively dense subgraph $G'$ of  $G(n,p)$ the basegraph
spans many copies of~$F$ itself.
More precisely, for a graph~$G$ on $n$ vertices and a nearly bipartite graph~$F=F'+e$
and $\lambda,\eta>0$ we say~$G$ has the property $T(\lambda,\eta,F)$ if for every subgraph $G^\prime\subset G$ with $e(G^\prime)\geq \lambda e(G)$ we have that the basegraph $\Base_F(G^\prime)$ contains at least $\eta n^{v(F)}$ copies of~$F$.

Lemma~\ref{lem:base} gives the second restriction for the family~$\cG$ for our application of Corollary~\ref{cor:Boo}.
Assuming that there is no copy of~$F$ in the bigger colour class of~$Z$,
Lemma~\ref{lem:base} will be helpful to find a copy of~$F$ in the intersection of $Z\cap G(n,\eps p)$ with the other colour class.

\begin{lemma}\label{lem:base}
For all $\lambda>0$, $C_1>C_0>0$ and every strictly balanced and nearly bipartite graph~$F$ there exists $\eta>0$ such that for $C_0n^{-1/m_2(F)}\leq p\leq C_1n^{-1/m_2(F)}$ the random graph $G(n,p)$ a.a.s.\ satisfies $T(\lambda, \eta,F)$.
\end{lemma}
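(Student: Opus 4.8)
\emph{The plan.} The plan is to reduce the statement to a density property of the basegraph and then invoke classical supersaturation, playing an upper bound coming from the sparseness of $G(n,p)$ off against a lower bound coming from the K{\L}R theorem. Write $F = F' + e$ with $e = \{u,w\}$ and $F'$ bipartite. Since $F$ is strictly balanced it is connected, and being not a forest it satisfies $e(F) \ge v(F)$, hence has minimum degree at least $2$; consequently $v(F') = v(F)$. Likewise, every proper subgraph of $F$ has $2$-density strictly below $m_2(F)$, so $m_2(F') < m_2(F)$, and a short computation using that $F$ is balanced gives $\mu := n^{v(F')} p^{e(F')} = \Theta(n^2)$ throughout the range $p = \Theta(n^{-1/m_2(F)})$. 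First I would record a deterministic input: by the Erd\H{o}s--Simonovits supersaturation theorem, for every $\delta > 0$ there is $\eta = \eta(\delta, F) > 0$ such that every $n$-vertex graph with at least $\delta n^2$ edges contains at least $\eta n^{v(F)}$ copies of $F$. Thus it suffices to produce $\delta > 0$ such that a.a.s.\ every $G' \subseteq G(n,p)$ with $e(G') \ge \lambda e(G(n,p))$ satisfies $e(\Base_F(G')) \ge \delta n^2$.

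\emph{Reduction to two probabilistic statements.} For a pair $f \in E(K_n)$ let $X_f$ denote the number of copies of $F'$ in $G(n,p)$ in which $f$ plays the role of $\{u,w\}$. The point of the basegraph is the elementary inequality: every copy of $F'$ contained in a graph $G'$ has an admissible root, and that root lies in $\Base_F(G')$, so the number of copies of $F'$ in $G'$ is at most $\sum_{f \in \Base_F(G')} X_f$ whenever $G' \subseteq G(n,p)$. I would then establish two facts. (A) A.a.s.\ every $G' \subseteq G(n,p)$ with $e(G') \ge \lambda e(G(n,p))$ contains at least $\kappa' n^2$ copies of $F'$, for some $\kappa' = \kappa'(\lambda, F) > 0$; this is the counting form of the K{\L}R theorem for $F'$, available because $m_2(F') < m_2(F)$ places $p = \Theta(n^{-1/m_2(F)})$ well above the threshold $n^{-1/m_2(F')}$, so the hypergraph container results of~\cites{BMS12,ST12} apply and yield $\kappa\mu = \Theta(n^2)$ copies. (B) For every $\delta > 0$ there is $\delta_1 = \delta_1(\delta, F)$ with $\delta_1 \to 0$ as $\delta \to 0$ such that a.a.s.\ $\sum_{f \in D} X_f \le \delta_1 n^2$ for every $D \subseteq E(K_n)$ with $|D| \le \delta n^2$. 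Granting (A) and (B), I would choose $\delta$ so small that $\delta_1(\delta, F) < \kappa'(\lambda, F)$: if some $G'$ as in (A) had $e(\Base_F(G')) < \delta n^2$, enlarging $\Base_F(G')$ to a set $D$ of size $\lceil \delta n^2 \rceil$ gives $\kappa' n^2 \le \sum_{f \in D} X_f \le \delta_1 n^2 < \kappa' n^2$, a contradiction; supersaturation with this $\delta$ then finishes the proof.

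\emph{Proof of (B) and the expected obstacle.} Here one has $\E[X_f] = \Theta(1)$ for each $f$ (using that $F$ is balanced) and $\sum_f X_f = a_F \cdot (\#\,F'\text{-copies in } G(n,p))$ for a constant $a_F \ge 1$. A second-moment computation shows that a.a.s.\ $\sum_f X_f^2 \le C n^2$ for an absolute constant $C = C(F)$: the dominant contribution comes from pairs of copies of $F'$ sharing only the pair $f$, while every graph $J$ obtained by gluing two copies of $F'$ along a larger vertex set still containing $\{u,w\}$ has $\E(\#\,J\text{-copies}) = O(n^2)$ and is non-degenerate enough for the second moment method, both of these being consequences of the strict balancedness of $F$. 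Conditioning on $\sum_f X_f^2 \le C n^2$, Markov's inequality applied to the counting measure gives $|\{f : X_f \ge t\}| \le C n^2 / t^2$ for every $t$; splitting $\sum_{f \in D} X_f$ at the threshold $t = \sqrt{2C/\delta}$ and bounding the heavy part by Cauchy--Schwarz against $\sum_f X_f^2$ yields $\sum_{f \in D} X_f = O_F(\sqrt{\delta}\, n^2)$, which is (B) with $\delta_1 = O_F(\sqrt\delta)$. The hard part will be exactly this concentration estimate $\sum_f X_f^2 \le C n^2$ with an \emph{absolute} constant --- a mere first-moment bound leaves a divergent factor and is too weak --- and it is precisely here that strict balancedness of $F$ is used in an essential way, to control all the graphs arising from overlapping two copies of $F'$. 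Marshalling the K{\L}R counting lemma into the exact form of (A), and relating $e(G(n,p))$ to $p\binom{n}{2}$, are routine once $m_2(F') < m_2(F)$ is noted.
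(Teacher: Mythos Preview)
Your approach is correct and takes a cleaner, more global route than the paper's. Both proofs share the same skeleton: a lower bound on the number of rooted copies of $F'$ in the dense subgraph, an upper bound on pairs of such copies sharing the same root (this is where strict balancedness enters), a Cauchy--Schwarz step converting these into a lower bound on $e(\Base_F(G'))$, and finally a dense supersaturation statement producing many copies of $F$. The paper carries this out \emph{locally}: it applies the sparse regularity lemma to $G'$, finds a partition class $V_{a_0}$ lying in many copies of $K_{a-1,b}$ in the reduced graph, and shows that $\Base_F(G')[V_{a_0}]$ is $(\rho,d)$-dense by running the $F'$-count/Cauchy--Schwarz argument inside every large $W\subseteq V_{a_0}$; the sparse counting lemma supplies the lower bound and the deletion method (Lemma~\ref{lem:JLRRandomgraphsL2.51}) the upper bound on $\sum_e x_e^2$. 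You instead quote the counting form of the K\L R theorem once for~(A), run the heavy--light Cauchy--Schwarz argument once globally for~(B), and appeal to Erd\H{o}s--Simonovits supersaturation directly. Your version avoids the regularity bookkeeping and the localisation to a single class, at the price of citing the K\L R counting lemma as a black box (which, of course, itself rests on the same regularity/container machinery).

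One technical point needs more care. For~(B) you need $\sum_f X_f^2 \le Cn^2$ a.a.s.\ with an \emph{absolute} constant~$C$; Markov's inequality applied to the (easily computed) expectation $\E\big[\sum_f X_f^2\big] = O(n^2)$ gives failure probability $O(1/C)$, not $o(1)$. You assert that each overlap type $J=\vphi_1(F')\cup\vphi_2(F')$ is ``non-degenerate enough for the second moment method'', i.e.\ that $n^{v(J_0)}p^{e(J_0)}\to\infty$ for every $J_0\subseteq J$ with $e(J_0)\ge 1$; this is plausible, but you have not verified it, and the case analysis for subgraphs $J_0$ not contained in a single copy of $F'$ is not entirely immediate from strict balancedness of~$F$. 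The paper sidesteps this with the deletion method: after removing at most $n\log n=o(pn^2)$ edges from $G(n,p)$, the count of each $F^\star\in\cF^\star$ drops below twice its mean with probability $1-\exp(-\Omega(n\log n))$, and a union bound over the finitely many $F^\star$ (and, in the paper's local version, over the $2^n$ choices of $W$) is then free. Since deleting $o(pn^2)$ edges does not disturb~(A), the same fix works verbatim in your global argument, and I would recommend it over the unproved second-moment claim.
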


\subsection{Colourings and hitting sets}\label{subsec:Colhitset}

In this section we establish the connection between hitting sets of the hypergraph~$\cH(Z,\Xi)$ and $F$-free colourings of~$Z$.

Recall that the definition of an interactive pair $(Z,\Xi)$ 
says that for every embedding $h\in\Xi\subseteq \Psi_{B,n}$ 
 the graphs $Z$ and $h(B)$ are edge disjoint and
 $Z\nrightarrow (F)_2^e$ and $B\nrightarrow (F)_2^e$ but $Z\cup h(B) \rightarrow (F)_2^e$.
Let $b_1,\dots,b_K$ be an enumeration of $E(B)$ and fix an $F$-free colouring $\sigma\colon E(B)\rightarrow \{\text{red,blue}\}$.
We copy this colouring for every $h\in\Xi$ by setting $\sigma_h\colon E(h(B))\rightarrow \{\text{red,blue}\}$ with $\sigma_h(h(b_i))=\sigma(b_i)$ for all~$i=1,\dots,K$.
Furthermore, let~$\vphi$ be an arbitrary $F$-free colouring of~$Z$. 

Since $Z\cup h(B) \rightarrow (F)_2^e$, the joint colouring of $Z\cup h(B)$ given by~$\vphi$ and~$\sigma_h$
 yields a monochromatic copy of~$F$ and this copy
must contain edges of both graphs, of~$Z$ and of~$h(B)$.
Thus each edge $M(Z,h(B))$ of the hypergraph $\cH(Z,\Xi)$ 
contains an $e\in E(Z)$ which focuses on some~$h(b)$ with~$b\in E(B)$, where we have $\vphi(e)=\sigma_h(h(b))=\sigma(b)$.
We say such an edge $e\in E(Z)$ (resp.\ vertex $e\in V(\cH)$) is \textit{activated} by~$\vphi$,~$\sigma$, and~$h$.
We define the set of activated vertices by
\begin{equation}\label{eq:defA}
	A_\vphi^{\sigma}
	=
	A_\vphi^{\sigma}(Z,\Xi)=\bigcup_{h\in\Xi}\{e\in E(Z)\colon e\text{ is activated by $\sigma$, $\vphi$ and $h$}\}\subseteq V(\cH)\,.
\end{equation}
Note that by definition for an interactive pair $(Z,\Xi)$ every edge $M(Z,h(B))$ of $\cH(Z,\Xi)$ contains an activated vertex and, hence, the set of activated vertices~$A_\vphi^\sigma$ is  a hitting set of~$\cH(Z,\Xi)$.
In what follows we will use different colourings~$\vphi$ of~$Z$ but we will always restrict to the same colouring~$\sigma$ of~$B$.

Suppose that in addition we have a fixed ordering of~$E(Z)$ and $E(B)=\{b_1,\dots,b_K\}$. Further suppose  that
the interactive pair $(Z,\Xi)$ is also index consistent with profile~$\pi$ of length~$\l$. In particular, the hypergraph $\cH(Z,\Xi)$ is $\l$-uniform.

It also follows from the definitions that for $z\in A^\sigma_\vphi\cap A^\sigma_{\vphi^\prime}$ for two colourings~$\vphi$ and~$\vphi^\prime$ we have $\vphi(z)=\vphi^\prime(z)$.
In fact, for $z\in A^{\sigma}_\vphi$ there exists an $h\in\Xi$ such that~$z$ is activated by~$\sigma$,~$\vphi$ and~$h$.
Let~$i$ be the index of~$z$ in $M(Z,h(B))$, then~$z$ focuses on $h(b_{\pi(i)})$ and, therefore, $\vphi(z)=\sigma(b_{\pi(i)})$.
Repeating the same argument for~$\vphi^\prime$, we obtain from index consistency that
 $\vphi^\prime(z)=\sigma(b_{\pi(i)})=\vphi(z)$.
We summarise these observations in the following fact.

\begin{fact}\label{fact:hittingset}
Let $(Z,\Xi)$ be an interactive, regular and index consistent pair with profile~$\pi$ and 
let~$\sigma$ be an $F$-free colouring of~$E(B)$ and $\vphi$ be an $F$-free colouring of $E(Z)$. Then
\begin{enumerate}[label=\AAlabel]
	\item\label{it:hitting1} $A_\vphi^\sigma(Z,\Xi)$ is a hitting set of $\cH(Z,\Xi)$ and
	\item\label{it:hitting2} for all $F$-free colourings $\vphi^\prime$ of~$E(Z)$
		and for all $z\in A^{\sigma}_\vphi \cap A^{\sigma}_{\vphi^\prime}$ we have $\vphi(z)=\vphi^\prime(z)$.
\end{enumerate}

\end{fact}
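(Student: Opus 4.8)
The plan is simply to unwind the definitions: both assertions have essentially been recorded in the discussion preceding the statement, so the proof only needs to assemble those observations carefully, invoking interactivity for~\ref{it:hitting1} and regularity together with index consistency for~\ref{it:hitting2}.

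For part~\ref{it:hitting1} I would fix an arbitrary $h\in\Xi$ and show that the edge $M(Z,h(B))$ of $\cH(Z,\Xi)$ is hit by $A^\sigma_\vphi$. Since $(Z,\Xi)$ is interactive we have $E(Z)\cap E(h(B))=\emptyset$ and $Z\cup h(B)\to(F)^e_2$, so the colouring of $E(Z\cup h(B))$ obtained by combining $\vphi$ on $E(Z)$ with $\sigma_h$ on $E(h(B))$ yields a monochromatic copy $F_0$ of~$F$. Because $\vphi$ is $F$-free on $Z$ and $\sigma_h$ is $F$-free on $h(B)$ (as $\sigma$ is $F$-free on $B$), the copy $F_0$ must use an edge $z\in E(Z)$ and an edge $b\in E(h(B))$. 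Then $F_0$ witnesses that $z$ focuses on $b$, so $z\in M(Z,h(B))$, and since $F_0$ is monochromatic we get $\vphi(z)=\sigma_h(b)$, i.e.\ $z$ is activated by $\vphi$, $\sigma$ and $h$. Hence $z\in A^\sigma_\vphi\cap M(Z,h(B))$, and as $h\in\Xi$ was arbitrary, $A^\sigma_\vphi$ meets every edge of $\cH(Z,\Xi)$.

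For part~\ref{it:hitting2} I would take $z\in A^\sigma_\vphi\cap A^\sigma_{\vphi'}$ and pick $h,h'\in\Xi$ such that $z$ is activated by $(\vphi,\sigma,h)$ and by $(\vphi',\sigma,h')$, respectively. Let $i$ and $i'$ be the indices of $z$ in $M(Z,h(B))$ and in $M(Z,h'(B))$. Regularity ensures that $z$ focuses on exactly one edge of $h(B)$, and by definition of the profile this edge is $h(b_{\pi(i)})$; since $z$ is activated this gives $\vphi(z)=\sigma_h(h(b_{\pi(i)}))=\sigma(b_{\pi(i)})$, and symmetrically $\vphi'(z)=\sigma(b_{\pi(i')})$. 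Index consistency of $(Z,\Xi)$ forces $i=i'$, whence $\vphi(z)=\sigma(b_{\pi(i)})=\vphi'(z)$.

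The only point that requires care — the main obstacle, such as it is — is to ensure that the colour forced on an activated edge $z$ depends only on $z$, namely through its index, the fixed profile $\pi$, and the fixed colouring $\sigma$, and not on the particular embedding $h$ that activated it. This is precisely where regularity is used (so that the index is well defined and $\pi$ is a genuine function) and where index consistency is used (so that the index does not vary with $h$), together with the fact that we keep $\sigma$ fixed throughout while only varying $\vphi$.
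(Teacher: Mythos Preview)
Your proposal is correct and follows essentially the same approach as the paper: the paper also records both assertions as direct consequences of the definitions in the paragraphs immediately preceding the Fact, using interactivity to produce an activated edge in each $M(Z,h(B))$ for~\ref{it:hitting1}, and deducing $\vphi(z)=\sigma(b_{\pi(i)})=\vphi'(z)$ from regularity and index consistency for~\ref{it:hitting2}. Your write-up is, if anything, slightly more explicit about where regularity enters (so that the profile is a function), but the logic is identical.
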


Now we are prepared to give the proof of the main theorem based on the lemmas and theorems of this section.
\vspace{0.4mm}
\section{Proof of the main theorem}\label{sec:proof main}
The starting point of the proof is Friedgut's criterion (see Corollary~\ref{cor:Boo}) applied to the contradictory assumption, that the Ramsey property $G\rightarrow (F)_2^e$ for a given strictly balanced and nearly bipartite graph $F$
has a coarse threshold.
For that we define a family of graphs~$\cG$ having ``useful'' properties and Lemma~\ref{lem:hauptlemma1} and Lemma~\ref{lem:base} show that a.a.s.\ $G(n,p)$ displays these properties.
Then Friedgut's criterion asserts for infinitely many $n\in\NN$ the existence of an $n$-vertex graph $Z\in\cG$, a graph~$B$ (called \emph{booster}), constants $\frac{1}{3}>\alpha>0$, $\eps>0$ and a family of embeddings $\Psi_{B,n}^\prime\subseteq \Psi_{B,n}$ with $Z\cup h(B)\rightarrow (F)_2^e$ for all $h\in\Psi_{B,n}^\prime$ and $\vert \Psi_{B,n}^\prime\vert\geq (1-\alpha)\vert\Psi_{B,n}\vert$, but $\P(Z\cup G(n,\eps p)\rightarrow (F)_2^e)<1-2\alpha$.
The goal is to find a contradiction to the last fact by showing $\P(Z\cup G(n,\eps p)\rightarrow (F)_2^e)=1-o(1)$. 

Let~$\Phi$ be the set of all $F$-free colourings of~$Z$.
We have to show that for any $\vphi\in\Phi$ the probability to extend~$\vphi$ to an $F$-free colouring of $Z\cup G(n,\eps p)$ is very small.
We are able to show that this probability is of order $\exp(-\Omega (pn^2))$.
Now we would like to use a union bound for all $\vphi\in\Phi$.
However, we have only little control over $\vert \Phi\vert$ and the trivial upper bound $2^{\Theta(pn^2)}$ is too large to combine it with the bound from above~$\exp (-\Omega(pn^2))$ to obtain for $\P(Z\cup G(n,\eps p)\nrightarrow (F)_2^e)$ a bound of order~$o(1)$ by the union bound. 

Instead we shall find a partition of~$\Phi$ into $2^{o(pn^2)}$ classes such that two colourings from the same partition class always agree on a large subset of~$Z$.
These subsets are called \textit{cores}.
Then we will show that the colouring of~$\vphi$ restricted to the associated core implies that~$\vphi$ is only with probability at most $\exp(-\Omega(pn^2))$ extendible to an $F$-free colouring of $Z\cup G(n,\eps p)$.
This allows us to use a union bound over all partition classes to get the desired upper bound 
on $\P(Z\cup G(n,\eps p)\nrightarrow (F)_2^e)$ of order~$o(1)$.

For the definition of the cores we will appeal to the hypergraph $\cH=\cH(Z,\Xi)$ which was defined in Section~\ref{subsec:mainlemmas}. Recall that $V(\cH)=e(Z)$ and hyperedges of~$\cH$ correspond 
to embeddings of $B$ in $K_n$, which are given by a carefully chosen subset 
$\Xi\subseteq \Psi_{B,n}^\prime$.
In fact, we shall select~$\Xi\subseteq \Psi_{B,n}^\prime$  
in such a way that we can  apply the structural result on independent sets of hypergraphs by 
Saxton and Thomason~\cite{ST12} to $\cH$ (see Lemma~\ref{lem:hauptlemma1}). In fact,
the cores then correspond to the complements of the almost independent sets from $\cJ$ given by the 
Saxton-Thomason theorem (Theorem~\ref{thm:ST}).  
This  yields a small family~$\cC$ of subsets of~$V(\cH)$, 
that means of size~$2^{o(pn^2)}$, such that the elements $C\in\cC$ are not too small and every hitting set of~$\cH$ contains at least one element from~$\cC$.

We then associate every $F$-free colouring $\vphi$ of~$Z$ with
a hitting set $A_{\vphi}^{\sigma}$ of~$\cH$ (for some $F$-free
colouring $\sigma$ of~$B$, see~part~\ref{it:hitting1} of Fact~\ref{fact:hittingset})
and thus we can associate to each such colouring~$\vphi$  a core $C\in\cC$ contained in~$A_{\vphi}^{\sigma}$.
This allows us to define the desired partition of the set of colourings~$\Phi$ using the ``small'' 
family of cores~$\cC$.
Finally, we use the union bound to estimate the probability that there is an $F$-free colouring of~$Z$ that can be extended to an $F$-free colouring of $Z\cup G(n,\eps p)$ by $o(1)$, which contradicts
$\P(Z\cup G(n,\eps p)\rightarrow (F)_2^e)<1-2\alpha$. Below we give the details of this proof.

\begin{proof}[Proof of Theorem~\ref{thm:Main}]
Let $F=F^\prime+\{a_1,a_2\}$ be a strictly balanced, nearly bipartite graph with~$F^\prime$ being bipartite
and assume for a contradiction that the property $G\rightarrow (F)_2^e$ does not have a sharp threshold.

We apply Corollary~\ref{cor:Boo} and obtain a function $p(n)=c(n)n^{-1/m_2(F)}$ with ${C_0<c(n)<C_1}$ for some $C_1> C_0>0$, constants $\frac{1}{3}>\alpha>0$, $\eps>0$ and a graph~$B$ with $B\nrightarrow (F)_2^e$.

For these parameters we apply Lemma~\ref{lem:hauptlemma1} and obtain 
constants $\alpha^\prime, \beta, \gamma>0$ and $L\in\NN$.
Set $\lambda=\beta/2$ and apply Lemma~\ref{lem:base}, which yields $\eta>0$.
Then let~$\cG_n$ be the family of graphs~$G$ on~$n$ vertices that satisfy the 
conclusions of Lemma~\ref{lem:hauptlemma1} and Lemma~\ref{lem:base} for the chosen parameters and $\frac{1}{4}pn^2\leq e(G) \leq pn^2$.
Since these properties hold a.a.s.\ in $G(n,p)$, it follows from
Corollary~\ref{cor:Boo}, that there are infinitely many $n\in\NN$ for which there is 
some~$Z\in\cG_n$ satisfying
\begin{enumerate}[label=\Rlabel]
\item\label{it:Boo1a} $\P(Z\cup h(B) \rightarrow (F)_2^e)>1-\alpha$, with $h\in \Psi_{B,n}$ chosen uniformly at random,
\item\label{it:Boo2b} $\P(Z\cup G(n,\eps p) \rightarrow (F)_2^e)<1-2\alpha$
\end{enumerate}
as well as by Lemma~\ref{lem:base}
\begin{enumerate}[label=\Tlabelb]
\item \label{prop T} $Z$ has the property $T(\lambda, \eta, F)$
\end{enumerate}
and
\begin{enumerate}[label=\Zlabelb]
\item \label{prop Z} $\frac{1}{4}pn^2\leq e(Z) \leq pn^2$.
\end{enumerate}

Owing to $Z\in \cG_n$ and~\ref{it:Boo1a} we can use Lemma~\ref{lem:hauptlemma1} to find some $\Xi_{B,n}\subseteq\Psi_{B,n}$ of size at least~$\alpha^\prime n^2$ with $Z\cup h(B)\rightarrow (F)_2^e$ for all $h\in \Xi_{B,n}$ such that the hypergraph $\cH=\cH(Z,\Xi_{B,n})$ is index consistent with a profile~$\pi$ of length $\l\leq L$ and such that there is a family~$\cC$ of subsets of~$V(\cH)$ with 
\begin{enumerate}[label=\Clabel]
\item\label{it:core1} $\log\vert \cC \vert \leq e(Z)^{1-\gamma}$,
\item\label{it:core2} $\vert C \vert \geq \beta e(Z)$ for all $C\in \cC$ and
\item\label{it:core3} every hitting set~$A$ of~$\cH$ contains a set $C\in \cC$.
\end{enumerate}

Our proof is by contradiction and 
we shall establish such a contradiction to the assertion~\ref{it:Boo2b}.

Let~$\Phi$ be the set of all $F$-free edge colourings of~$E(Z)$ and pick an arbitrary $F$-free colouring~$\sigma$ of~$B$.
We want to split~$\Phi$ into ``few'' classes.
For this we use the correspondence between any colouring $\vphi\in\Phi$ and the hitting set 
$A^{\sigma}_{\vphi}=A^{\sigma}_{\vphi}(Z,\Xi_{B,n})$ of~$\cH$ given by part~\ref{it:hitting1} 
of Fact~\ref{fact:hittingset}.
Moreover, for $C\in \cC$ we define 
\[
	\Phi_C=\{\vphi\in\Phi\colon C\subseteq A_\vphi^\sigma\}\,.
\]
Then $\Phi=\bigcup_{C\in\cC}\Phi_C$ (not necessarily disjoint) since by~\ref{it:core3} for every $\vphi\in\Phi$ 
the hitting set~$A_\vphi^\sigma$ contains some $C\in\cC$ and hence~$\vphi\in \Phi_{C}$.

Part~\ref{it:hitting2} of Fact~\ref{fact:hittingset}
asserts that $\vphi(z)=\vphi^\prime(z)$ for all $z\in A_\vphi^\sigma\cap A_{\vphi^\prime}^\sigma$ and colourings~$\vphi$, $\vphi^\prime\in\Phi$. In other words, all colourings in~$\Phi_C$ agree on~$C$ and, hence, there exists a monochromatic subset~$R_C\subseteq C$, say coloured red, of size at least~$\vert C\vert/2\geq \beta e(Z)/2=\lambda e(Z)$ (see~\ref{it:core2} and the choice of $\lambda$).
For the desired contradiction we add~$G(n,\eps p)$ to~$Z$. We have to show that
\[\P(Z\cup G(n,\eps p)\nrightarrow (F)_2^e)=o(1)\,.\]
For this purpose we find for all $F$-free colourings~$\vphi$ of~$Z$ an upper bound for the probability that~$\vphi$ is extendible to an $F$-free colouring of $Z\cup G(n,\eps p)$.
For $\vphi$ we use only the colouring on the associated core $C\subseteq A^\sigma_\vphi$, instead of the colouring on all edges of~$Z$.
In this way we can deal with all embeddings~$\vphi\in \Phi_C$ at once since they coincide on~$C$.

Since the red colour class $R_C$ contains at least $\lambda e(Z)$ edges it follows from
property~\ref{prop T}, that there are at least $\eta n^{v(F)}$ copies of~$F$ in the basegraph  
$\Base_{F}(R_C)$ of~$R_C$ w.r.t.\ $F$.
In an $F$-free colouring of $Z\cup G(n,\eps p)$ all edges in 
\[
	U_C=G(n,\eps p)\cap \Base_F(R_C)
\] 
have to be coloured blue since every edge in $\Base_F(R_C)$ completes a red copy of~$F^\prime$ in~$R_C$ to a copy of~$F$.
Consequently,~$\vphi$ cannot be extended to an $F$-free colouring of $Z\cup G(n,\eps p)$ if $U_C$ spans a copy of~$F$.
However, since $\Base_F(R_C)$ contains $\Omega(n^{v(F)})$ copies of~$F$ and $p=\Omega(n^{-1/m_2(F)})$ it follows from Janson's inequality~\cite{Ja90} (see also~\cite{JLR90}) that it is very unlikely that $U_C$ is $F$-free.
In fact, a standard application of Janson's inequality asserts that there exists some 
$\gamma^\prime=\gamma^\prime(\eps,\eta,C_0,C_1,F)$
such that 
\begin{equation}\label{eq:Janson}
	\P(F\nsubseteq G(n,\eps p)\cap \Base_F(R_C))=
	\P(F\nsubseteq U_C)\leq 
	\exp\left(-\gamma^\prime n^{2-\frac{1}{m_2(F)}}\right)\,.
\end{equation}
We then deduce the desired contradiction to~\ref{it:Boo2b} by
\begin{eqnarray*}
\P(Z\cup G(n,\eps p)\nrightarrow (F)_2^e)
&\leq&\vert \cC\vert\cdot  \max_{C\in\cC}\P(\exists \vphi\in\Phi_C\colon\vphi \text{ is extendible to }U_C)\\
&\overset{\text{\ref{it:core1}}}{\leq} &\exp\left(e(Z)^{1-\gamma}\right)\cdot \max_{C\in\cC}\P(F\nsubseteq U_C)\\
&\overset{\text{\ref{prop Z}}}{\leq}& \exp\left((pn^2)^{1-\gamma}\right)\cdot \max_{C\in\cC}\P(F\nsubseteq U_C)\\
&\overset{\text{\eqref{eq:Janson}}}{\leq}&
\exp\left((C_1n^{2-\frac{1}{m_2(F)}})^{1-\gamma}\right)\cdot\exp\left(-\gamma^\prime n^{2-\frac{1}{m_2(F)}}\right)\\
&<& \alpha\,,
\end{eqnarray*}
for sufficiently large $n$, since $\gamma>0$  and $C_1$, $\gamma$, and $\gamma'$ are constants 
independent of~$n$. This concludes the proof of 
Theorem~\ref{thm:Main}.
\end{proof}

\section{Proof of Lemma~\ref{lem:hauptlemma1}}\label{sec:proof Lemma 1}

The key tool to prove Lemma~\ref{lem:hauptlemma1} is the 
\emph{container theorem} (see Section~\ref{subsec:hypergraph containers}).
We shall apply Theorem~\ref{thm:ST} to the hypergraph  $\cH(Z,\Xi_{B,n})$. In order to satisfy the assumptions of  Theorem~\ref{thm:ST}
we may enforce some properties on the typical graph~$Z$ and the family of embeddings~$\Xi_{B,n}$.
Firstly in Section~\ref{subsec:Gnp}  we will formulate some properties on~$Z$ that hold a.a.s.\ for $G(n,p)$ and which will turn out to be useful 
for locating a suitable family of embeddings $\Xi_{B,n}\subseteq \Psi_{B,n}$ (see Section~\ref{subsec:Xi}).
In Section~\ref{subsec:Proofhauptlemma} we finally check that for those choices the assumptions of Theorem~\ref{thm:ST} are satisfied by 
the hypergraph $\cH(Z,\Xi_{B,n})$.

\subsection{Some typical properties of \texorpdfstring{$G(n,p)$}{G(n,p)}}\label{subsec:Gnp}
Corollary~\ref{cor:Boo} yields a family of embeddings of~$B$ into~$K_n$. We restrict ourselves to regular embeddings with foresight to the later parts of the proof.
Actually we want that for every edge $e\in E(Z)$ and every embedding $h$ there is at most one $b\in E(B)$ such that $e$ focuses on $h(b)$.
In addition there should be exactly one copy of~$F$ that contains~$e$ and~$h(b)$ if~$e$ focuses on~$h(b)$.
There are three ways such that this fails.

\begin{definition}\label{def:bad}
Let~$F$,~$B$,~$Z$ be graphs with $Z\subseteq K_n$. An embedding $h\in \Psi_{B,n}$ is \textit{bad} (with respect to~$F$
and~$Z$) if one of the following holds
\begin{enumerate}[label=\Blabel]
\item\label{it:bad1} either there is a copy~$F_1$ of~$F$ in $Z \cup h(B)$ that contains at least one edge of 
$E(Z)\setminus E(h(B))$ and at least two edges of~$E(h(B))$,
\item\label{it:bad2} or there are distinct copies~$F_1$ and~$F_2$ of~$F$ in $Z\cup h(B)$ and edges $e$, $f_1\neq f_2$ with $e\in E(Z)\setminus E(h(B))$ and $e\in E(F_1)\cap E(F_2)$, $f_1, f_2\in E(h(B))$ such that $f_1\in E(F_1)$ and $f_2\in E(F_2)$
\item\label{it:bad3} or there are distinct copies~$F_1$ and~$F_2$ of~$F$ in $Z\cup h(B)$ and edges~$e,f$ with $e\in E(Z)\setminus E(h(B))$ and $e\in E(F_1)\cap E(F_2)$, $f\in E(h(B))$ and $f\in E(F_1)\cap E(F_2)$.
\end{enumerate}
\end{definition}

Note that~\ref{it:bad3} would be a special case of~\ref{it:bad2} if we did not require~$f_1\neq f_2$ there.
However, for the later discussion it is better to distinguish these cases, 
and the idea of excluding embeddings $h$ because of~\ref{it:bad3} will be used in the proof 
of Lemma~\ref{lem:hauptlemma1} (see Lemma~\ref{lem:index}).

\begin{fact}\label{fact:bad imp reg}
For $F$, $B$ and $Z$ let $\Xi_{B,n}\subseteq \Psi_{B,n}$ be a family of embeddings such that properties~\ref{it:bad1} and~\ref{it:bad2} fail for every~$h\in \Xi_{B,n}$.
Then clearly the pair $(Z,\Xi_{B,n})$ is regular.
\end{fact}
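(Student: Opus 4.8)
The plan is to prove the contrapositive one embedding at a time. Recall that $(Z,\Xi_{B,n})$ is regular exactly when $(Z,\{h\})$ is regular for every $h\in\Xi_{B,n}$, so it suffices to fix a single $h\in\Xi_{B,n}$ and show: if $(Z,\{h\})$ is \emph{not} regular, then $h$ satisfies~\ref{it:bad1} or~\ref{it:bad2}, contradicting the hypothesis that both of these properties fail for every $h\in\Xi_{B,n}$.

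So suppose $(Z,\{h\})$ is not regular. By definition there is an edge $z\in E(Z)$ which focuses on two \emph{distinct} edges $f_1,f_2\in E(h(B))$; unwinding the definition of ``focuses'', for each $i\in\{1,2\}$ we obtain a copy $F_i$ of $F$ contained in $Z\cup h(B)$ with $z,f_i\in E(F_i)$. I would then split according to whether these copies coincide. If $F_1=F_2=:F_0$, then the single copy $F_0$ contains the two distinct edges $f_1,f_2\in E(h(B))$ and the edge $z\in E(Z)\setminus E(h(B))$, which is precisely the configuration forbidden by~\ref{it:bad1}. If $F_1\neq F_2$, then $F_1$ and $F_2$ are distinct copies of $F$ in $Z\cup h(B)$ sharing the edge $z\in E(Z)\setminus E(h(B))$, so $z\in E(F_1)\cap E(F_2)$, while $f_1\neq f_2$ satisfy $f_1\in E(F_1)\cap E(h(B))$ and $f_2\in E(F_2)\cap E(h(B))$ --- precisely the configuration forbidden by~\ref{it:bad2}. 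Either way $h$ is bad, a contradiction, so each $(Z,\{h\})$ is regular and hence so is $(Z,\Xi_{B,n})$.

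The one point that needs a word of care --- and the reason this is only a ``Fact'' --- is the use of $z\in E(Z)\setminus E(h(B))$ above: it relies on $z\notin E(h(B))$. This is immediate in the regime in which the fact is applied, since the families $\Xi_{B,n}$ we work with consist of embeddings $h$ with $E(Z)\cap E(h(B))=\emptyset$ (edge-disjointness of $Z$ and $h(B)$ being part of the definition of an interactive pair, and easily enforced when selecting $\Xi_{B,n}$); one may simply restrict to such $h$. With that observation every step is a direct translation between Definition~\ref{def:bad} and the definition of a regular pair, so there is no genuine obstacle here --- just the bookkeeping check described above, which is what the word ``clearly'' in the statement refers to.
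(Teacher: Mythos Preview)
Your argument is correct and is exactly the unpacking the paper intends by the word ``clearly'' --- the paper gives no proof of this Fact beyond that word, so there is no alternative approach to compare against. You have also correctly identified the one genuine subtlety: the conditions~\ref{it:bad1} and~\ref{it:bad2} are phrased with $e\in E(Z)\setminus E(h(B))$, so the contrapositive only goes through once one knows $z\notin E(h(B))$, which is guaranteed by~\ref{it:disem5} in the families to which the Fact is actually applied (see the proof of Lemma~\ref{lem:index}). Your handling of this point is accurate.
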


We shall show that for the random graph $Z=G(n,p)$ only a few embeddings $h\in\Psi_{B,n}$ are bad (see~\ref{it:F--B} in Definition~\ref{def:Z} and Lemma~\ref{lem:G(BFnzg)} below), which enables us to focus on regular pairs 
$(Z,\Xi_{B,n})$. Moreover, we shall restrict to typical graphs~$Z$, which render a few more somewhat technical properties such as containing roughly the expected number of some special subgraphs. We discuss those properties below.

Let $\cF_{-}$ be the family of spanning subgraphs of $F$ obtained by removing 
some edge and for a graph $G$ we denote by $\cF_-(G)$ the copies of the members of $\cF_-$ in $G$.
Furthermore, for an edge $e\in E(G)$ let $\cF_{-}(G,e)$ be those copies in $\cF_-(G)$ that contain $e$.
For $e_1$, $e_2\in \binom{V(G)}{2}$ 
let $\cP(G,e_1,e_2)$ 
be the set of pairs $(F_1,F_2)$ of two edge disjoint subgraphs of~$G$
such that
\begin{itemize}
\item $F_1$ and~$F_2$ are copies of (possibly different) spanning subgraphs of~$F$, each of which obtained from~$F$ by removing two edges,
\item  the intersection $V(F_1)\cap V(F_2)=\{x_1,x_2,\dots, x_s\}$
contains at least two vertices, and
\item
$F_1 +\{x_1,x_2\} +e_1$ and $F_2 +\{x_1,x_2\} +e_2$ are isomorphic to~$F$.
\end{itemize}
For $s\geq 2$ let $\cP_{s}(G,e_1,e_2)\subseteq \cP(G,e_1,e_2)$ be the set of pairs as in $\cP(G,e_1,e_2)$ such that~$F_1$ and~$F_2$ intersect in exactly $s$ vertices.
Note that for $i=1,2$ by definition $e_i\neq \{x_1,x_2\}$ and~$e_i$
is not required to be an edge of~$G$.

These concepts lead to the following definition of ``good'' graphs $Z$, 
where we impose that the sizes of the introduced families defined above are close to the respective 
expectation in~$G(n,p)$.
Then Lemma~\ref{lem:G(BFnzg)}
states that a.a.s.\ $G(n,p)$ is indeed good for the right choice of parameters.

\begin{definition}\label{def:Z}
For graphs~$F$ and~$B$ and constants $D>0$, $\zeta>0$, $\delta>0$ and~$p\in(0,1)$ we consider the set of graphs $\cG_{B,F,n,p}(D,\zeta,\delta)$ on $n$ vertices that is given by $Z\in\cG_{B,F,n,p}(D,\zeta,\delta)$ if and only if 
\begin{enumerate}[label=\Zlabel]
\item\label{it:F--1} $\frac{1}{4}pn^2\leq e(Z) \leq pn^2$,
\item\label{it:F--2} $ |\cF_{-}(Z)|\leq D n^2$,
\item\label{it:F--3} $|\cF_{-}(Z,e)|\leq \frac{D}{p}$ for all $e\in E(Z)$,
\item\label{it:F--4} $|\cP(Z,e_1,e_2)|\leq \frac{D}{pn^\delta}$ for all but at most $\frac{Dpn^2}{n^\delta}$ pairs of distinct edges $e_1,e_2\in E(Z)$ and
\item\label{it:F--B} $\vert \lbrace h\in \Psi_{B,n}\colon h \text{ is bad w.r.t.\ }F\tand Z \rbrace \vert \leq \frac{\vert \Psi_{B,n}\vert}{n^\zeta}$.
\end{enumerate}
\end{definition}

The following Lemma shows that a.a.s.\ $G(n,p)\in \cG_{B,F,n,p}(D,\zeta,\delta)$ for~$D$ sufficiently large and~$\zeta$ and~$\delta$ sufficiently small (in fact, our choice of $\delta$ will imply $pn^\delta\rightarrow 0$).

\begin{lemma}\label{lem:G(BFnzg)}
For every strictly balanced graph~$F$, for every graph~$B$, and for all constants
$C_1\geq C_0>0$ there are constants $D>0$, $\zeta>0$, and $\delta$ 
with $0<\delta\leq \min\big\lbrace\tfrac{1}{m_2(F)},1-\tfrac{1}{m_2(F)}\big\rbrace$
such that for $C_0n^{-1/m_2(F)}\leq p\leq C_1n^{-1/m_2(F)}$ a.a.s.\ $G(n,p)\in\cG_{B,F,n,p}(D,\zeta,\delta)$.
\end{lemma}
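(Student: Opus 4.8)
The plan is to verify each of the five conditions \ref{it:F--1}--\ref{it:F--B} separately, showing that each fails with probability $o(1)$, and then take a union bound. Throughout, write $p = \Theta(n^{-1/m_2(F)})$, so that the expected number of copies of any subgraph $F'' \subseteq F$ in $G(n,p)$ is $\Theta(n^{v(F'')} p^{e(F'')})$; strict balancedness of $F$ will be the recurring structural input that makes these expectations behave. For \ref{it:F--1}: the number of edges $e(G(n,p))$ is a sum of independent indicators with mean $\binom n2 p \sim \tfrac12 pn^2$, so Chernoff's inequality gives $\tfrac14 pn^2 \le e(G(n,p)) \le pn^2$ a.a.s. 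For \ref{it:F--2}: each $F_-\in\cF_-$ is obtained from $F$ by deleting one edge, so $e(F_-)=e(F)-1$ and $v(F_-)\le v(F)$; since $F$ is strictly balanced with $m_2(F)=\frac{e(F)-1}{v(F)-2}$, one computes $\E|\cF_-(G(n,p))| = \Theta\big(n^{v(F)} p^{e(F)-1}\big) = \Theta\big(n^{v(F)} \cdot n^{-(e(F)-1)/m_2(F)}\big) = \Theta(n^2)$ (the exponent is exactly $v(F) - (v(F)-2) = 2$); Markov's inequality then yields \ref{it:F--2} a.a.s. with $D$ chosen large.

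For \ref{it:F--3} we want a uniform-over-edges bound, so a first-moment argument over edges is needed: the expected number of pairs $(e, F_-)$ with $e\in E(G(n,p))$, $F_-\in\cF_-(G(n,p),e)$ and $|\cF_-(G(n,p),e)| > D/p$ — here strict balancedness is crucial, because for any $F_- \in \cF_-$ containing a fixed edge $e$, every proper subgraph $F''\subsetneq F_-$ (in particular $F_-$ minus an edge) satisfies a subcriticality estimate $n^{v(F'')-2}p^{e(F'')-1} \le n^{o(1)}$ away from the extremal case, forcing the number of extensions of $e$ to a copy of $F_-$ to concentrate below $D/p = \Theta(n^{1/m_2(F)})$ with room to spare; a careful moment computation (e.g.\ counting ordered pairs of edge-overlapping $F_-$-copies through $e$, or more simply bounding $\E\big[\sum_{e} \mathbf 1[e\in G]\cdot|\cF_-(G,e)|^t\big]$ for a suitable small integer power $t$ and applying Markov) shows the exceptional event has probability $o(1)$. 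Condition \ref{it:F--4} is analogous but one level more delicate: a pair $(F_1,F_2)\in\cP(Z,e_1,e_2)$ consists of two copies of (spanning subgraphs of $F$ with two edges removed) meeting in $s\ge 2$ vertices and completable through $\{x_1,x_2\}$; because $F_i$ loses two edges, $e(F_i)=e(F)-2$, and after accounting for the shared vertices, strict balancedness again forces $\E|\cP(Z,e_1,e_2)|$, summed appropriately, to carry a genuine saving of a small power $n^{-\delta}$ — this is where the constraint $0<\delta \le \min\{1/m_2(F), 1-1/m_2(F)\}$ comes from, and one proves that the number of pairs $(e_1,e_2)$ with $|\cP(Z,e_1,e_2)| > D/(pn^\delta)$ is at most $Dpn^2/n^\delta$ a.a.s.\ by a two-step first-moment/Markov argument on the random variable $\sum_{e_1\ne e_2} |\cP(G(n,p),e_1,e_2)|$.

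Finally, condition \ref{it:F--B} bounds the number of bad embeddings $h\in\Psi_{B,n}$. Each of the three types \ref{it:bad1}, \ref{it:bad2}, \ref{it:bad3} of badness forces $Z\cup h(B)$ to contain a specific small configuration using at least one edge of $Z$ outside $h(B)$ together with structure supplied by $h$; the number of edges of $Z$ actually involved is at least one, while the configuration spans only few vertices, so — exploiting once more strict balancedness of $F$, which guarantees that any genuine $Z$-contribution to a copy of $F$ costs a factor $p^{\ge 1}$ not fully compensated by vertex count — the expected number of bad embeddings is $|\Psi_{B,n}| \cdot n^{-\Omega(1)}$. Markov's inequality then gives \ref{it:F--B} a.a.s.\ for a suitable $\zeta>0$. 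Taking $D$ as the maximum and $\zeta,\delta$ as the minimum of the constants produced in the five steps, and applying the union bound over the (finitely many) failure events, completes the proof.

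I expect the main obstacle to be condition \ref{it:F--4}: isolating exactly where the $n^{-\delta}$ saving comes from requires a careful case analysis over the possible intersection patterns $V(F_1)\cap V(F_2) = \{x_1,\dots,x_s\}$ for $s\ge 2$, and one must check that in \emph{every} such pattern strict balancedness of $F$ (applied to the relevant glued subgraph) leaves a strictly positive exponent that can be named $\delta$, uniformly bounded below by $\min\{1/m_2(F),1-1/m_2(F)\}$; the edge cases ($s=2$, or $F_1\cong F_2$, or the two missing edges coinciding) are the ones to watch, and handling them is the technical heart of the lemma.
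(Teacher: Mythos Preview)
Your overall strategy---check \ref{it:F--1}--\ref{it:F--B} separately and union-bound---matches the paper, and your treatment of \ref{it:F--1} and \ref{it:F--B} is essentially correct. However, there are genuine gaps in your handling of \ref{it:F--2}, \ref{it:F--3}, and especially \ref{it:F--4}.

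For \ref{it:F--2}, Markov's inequality alone does \emph{not} yield an a.a.s.\ statement: with $\E[X]=\Theta(n^2)$ you only get $\P(X>Dn^2)\le C/D$, a fixed constant independent of $n$. The paper instead uses second-moment concentration (or, more simply, notes that \ref{it:F--2} follows deterministically from \ref{it:F--1} and \ref{it:F--3}). For \ref{it:F--3}, your proposed $t$-th moment bound is too vague; the paper handles the required uniformity over all edges $e$ cleanly via Spencer's extension lemma (Theorem~\ref{thm:Spencer}), the key input being that strict balancedness gives $\mad((x_1,x_2),F_-)<m_2(F)$ for every rooted $F_-\in\cF_-$.

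The serious gap is \ref{it:F--4}. A two-step first-moment/Markov argument fails quantitatively in the dominant case $s=|V(F_1)\cap V(F_2)|=2$. Indeed $\E|\cP_2(G(n,p),e_1,e_2)|=O(n^{-2}p^{-2})$, so Markov per pair gives $\P\big(|\cP_2|>D/(pn^\delta)\big)=O(p^{-1}n^{\delta-2})$; summing over $O(p^2n^4)$ edge pairs yields an expected $O(pn^{2+\delta})$ bad pairs, which exceeds the target $Dpn^{2-\delta}$ by a factor $n^{2\delta}$. The paper therefore uses Chebyshev's inequality per pair $(e_1,e_2)$, and the real work is bounding $\Var(X_{e_1,e_2})$: this requires a case analysis not over intersection patterns of a single pair $(F_1,F_2)$, but over how \emph{two} configurations $((F_a,F_b),(F_c,F_d))$ can overlap, exploiting strict balancedness repeatedly on various subgraphs of $F$ to bound each term. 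Only for $s\ge 3$ does the expectation already drop enough for Markov to suffice. Your final paragraph correctly anticipates that \ref{it:F--4} is the heart of the lemma, but the case analysis you should be doing is for the variance, not the expectation.
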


We will split the proof into two parts: First we consider~\ref{it:F--1}-\ref{it:F--4} which deals with subgraphs of~$Z$ (Lemma~\ref{lem:F--inGnp}), and then we deal with the bad embeddings considered in~\ref{it:F--B} (Lemma~\ref{lem:G(BFnz)}).

\begin{lemma}\label{lem:F--inGnp}
For constants $C_1\geq C_0>0$, a strictly balanced graph~$F$, and~$p$ and~$n$
with $C_0 n^{-1/m_2(F)}\leq p\leq C_1n^{-1/m_2(F)}$ the following holds.
There exist constants $D>0$ and~$\delta$ with $0<\delta< \min\big\lbrace\tfrac{1}{m_2(F)},1-\tfrac{1}{m_2(F)}\big\rbrace$ such that a.a.s.\ $G(n,p)$ satisfies the properties~\ref{it:F--1},~\ref{it:F--2},~\ref{it:F--3}, and~\ref{it:F--4} with the parameters $p$, $D$, and $\delta$ and for the graph~$F$.
\end{lemma}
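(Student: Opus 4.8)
The plan is to verify the four properties~\ref{it:F--1}--\ref{it:F--4} separately and then intersect the finitely many resulting a.a.s.\ events. Two elementary features of a strictly balanced~$F$ will be used throughout. Writing $m=m_2(F)=\tfrac{e(F)-1}{v(F)-2}$ (as $F$ is balanced, $d_2(F)=m$), strict balancedness supplies a constant $\rho>0$ with $d_2(F')\le m-\rho$ for \emph{every} proper subgraph $F'\subsetneq F$ with an edge; a short rearrangement then shows $e(F'')<m\,v(F'')$ for \emph{every} subgraph $F''\subseteq F$ with $e(F'')\ge 1$, hence $n^{v(F'')}p^{e(F'')}\to\infty$ whenever $p=\Theta(n^{-1/m})$. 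Moreover every such $F$ has minimum degree at least~$2$: if a vertex $v$ had degree~$1$, then $d_2(F-v)<m$ would rearrange to $e(F)<v(F)-1$, contradicting $m>1$. Consequently every $F^-\in\cF_-$ has no isolated vertex, so a copy of a member of $\cF_-$ genuinely has $v(F)$ vertices and $e(F)-1$ edges.

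Properties~\ref{it:F--1} and~\ref{it:F--2} are the easy ones. For~\ref{it:F--1}, $e(G(n,p))$ is binomial with mean $\binom n2 p\sim\tfrac12 pn^2\to\infty$, so Chernoff's inequality gives $\tfrac14 pn^2\le e(G(n,p))\le pn^2$ a.a.s. For~\ref{it:F--2}, a first-moment computation yields $\E|\cF_-(G(n,p))|=\Theta(n^{v(F)}p^{e(F)-1})=\Theta(n^2)$, using $p^{e(F)-1}=\Theta(n^{-(v(F)-2)})$; concentration then follows by the second-moment method, since the only contributions to $\Var|\cF_-(G(n,p))|$ come from pairs of $F^-$-copies overlapping in a subgraph $F''\subseteq F$ with $v(F''),e(F'')\ge 1$, each giving a relative-variance term $O\big(1/(n^{v(F'')}p^{e(F'')})\big)=o(1)$ by the inequality recorded above. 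Thus a.a.s.\ $|\cF_-(G(n,p))|\le Dn^2$ for $D$ a suitable absolute constant.

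Property~\ref{it:F--3} is the technical heart: it asserts a bound for \emph{all} edges with no exceptional set allowed, while the relevant per-edge count $|\cF_-(G(n,p),e)|$ has conditional mean only $\mu:=\Theta(n^{v(F)-2}p^{e(F)-2})=\Theta(1/p)$, so plain Markov (per-edge failure probability $\Theta(p)$) cannot survive a union bound over the $\Theta(n^2)$ potential edges. The plan is instead to bound, for each potential edge~$e$, the probability that $G(n,p)$ contains a collection $\cS$ of $t:=\lceil D/p\rceil$ distinct copies of members of $\cF_-$ through~$e$, via $\P(|\cF_-(G,e)|\ge t\mid e\in G)\le\sum_{\cS}p^{\,e(W(\cS))-1}$, where $W(\cS)=\bigcup\cS$. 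The delicate point is that a ``generic'' collection spans a graph $W$ with $\sim t\,(e(F)-2)$ edges, making $p^{e(W)}$ doubly-exponentially small, but highly overlapping collections span much sparser $W$; one must classify collections by their overlap pattern, each pattern being built by adding copies one at a time so that a new copy meeting the current union in a subgraph $F''$ (a proper subgraph of $F$ containing $e$, so $v(F'')\ge 2$, $e(F'')\ge 1$) contributes a controllable factor --- using the Stirling saving $\binom{N}{t}\le N^t/t!$ (with $N=\Theta(n^{v(F)-2})$) in the ``minimally overlapping'' regime and the bound $d_2(F'')\le m-\rho$ to make the heavier-overlap regimes polynomially small. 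Optimising $t$ and taking $D$ large enough should give $\P(|\cF_-(G,e)|\ge t\mid e\in G)=\exp(-n^{\Omega(1)})$ uniformly in~$e$, which beats the $\Theta(n^2)$ union bound. Equivalently, one may feed the rooted count $|\cF_-(G,e)|$ --- a polynomial of degree $e(F)-2$ in the edge indicators whose partial-derivative expectations are all polynomially below~$\mu$ by strict balancedness --- into Kim--Vu polynomial concentration, obtaining the same per-edge tail directly.

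For~\ref{it:F--4} the situation is easier because an exceptional set of $Dpn^2/n^\delta$ pairs is permitted, so a Markov bound on the number of ``bad'' pairs suffices. One computes, for a generic pair of potential edges $e_1,e_2$, the first moment of $|\cP(G(n,p),e_1,e_2)|$; the configurations counted are two edge-disjoint copies of ``$F$ minus two edges'' meeting in $s\ge 2$ vertices and completable to copies of $F$ by the prescribed extra edges, and since any subgraph of $F$ on at least two vertices has $2$-density at most~$m$ with strict inequality except for $F$ itself, the identification $V(F_1)\cap V(F_2)\ni x_1,x_2$ forces an exponent strictly below $1/m$. This yields a gap $\delta_0>0$; choosing $\delta$ with $0<\delta<\min\{1/m,\,1-1/m,\,\delta_0\}$ makes $\E\big[\#\{(e_1,e_2)\in E(Z)^2\colon|\cP(G,e_1,e_2)|>D/(pn^\delta)\}\big]=o(Dpn^2/n^\delta)$ after summing over the $\Theta(n^4)$ pairs and invoking $e(G)\le pn^2$, and Markov's inequality gives~\ref{it:F--4} a.a.s. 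Intersecting the four a.a.s.\ events completes the proof. The main obstacle is~\ref{it:F--3}: producing a per-edge tail bound strong enough to survive the union bound over all $\Theta(n^2)$ edges without the cushion of an exceptional set.
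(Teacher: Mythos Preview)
Your treatment of \ref{it:F--1} and \ref{it:F--2} matches the paper. For \ref{it:F--3} you propose Kim--Vu polynomial concentration or a direct overlap-classification argument; the paper instead invokes Spencer's extension lemma for rooted subgraph counts, which delivers the required uniform concentration over all root edges in one stroke. Strict balancedness of $F$ ensures $\mad((x_1,x_2),F_-)<m_2(F)$ for every rooted graph $((x_1,x_2),F_-)$ with $F_-\in\cF_-$, so $p=\Theta(n^{-1/m_2(F)})$ lies above Spencer's threshold. Your Kim--Vu route is a valid alternative --- strict balancedness indeed forces every non-trivial partial-derivative expectation polynomially below the mean --- but it is heavier machinery for the same conclusion, and the overlap-classification route heavier still.

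The genuine gap is in your argument for \ref{it:F--4}. A pure first-moment/Markov approach does not suffice when the two copies $F_1$, $F_2$ meet in exactly $s=2$ vertices. In that case (for disjoint $e_1$, $e_2$) one has $\E|\cP_2(G,e_1,e_2)|=O(n^{2v(F)-6}p^{2e(F)-4})=O(n^{-2}p^{-2})$, so Markov gives only
\[
\P\big(|\cP_2|>D/(pn^\delta)\big)\le \frac{\E|\cP_2|}{D/(pn^\delta)}=O\big(p\,n^{\delta-2}\big)\,.
\]
Multiplying by at most $(pn^2)^2$ pairs of edges of $Z$ yields an expected number of bad pairs of order $pn^{2+\delta}$, which exceeds the permitted exceptional-set size $Dpn^{2-\delta}$ by a factor $n^{2\delta}$; no choice of $\delta>0$ closes this, and the strict-balancedness ``gap $\delta_0$'' you invoke has already been spent in reaching $\E|\cP_2|=O(n^{-2}p^{-2})$. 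The paper handles $s=2$ by Chebyshev instead: a (lengthy) case analysis of how two pairs of configurations can overlap gives $\Var(|\cP_2(G,e_1,e_2)|)=O\big(n^{-2}p^{-2}(1+1/(np^2))\big)$, and Chebyshev with $t=1/(pn^\delta)$ then yields per-pair failure probability $O\big(n^{2\delta-2}(1+1/(np^2))\big)$, which \emph{is} small enough once $\delta<\tfrac14\min\{1/m_2(F),\,1-1/m_2(F)\}$. For $s\ge 3$ the extra shared vertices make the expectation small enough that Markov alone works, and that is precisely what the paper does in its Case~2.
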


For the proof of Lemma~\ref{lem:F--inGnp} we note that property~\ref{it:F--1} follows directly from the concentration 
of the binomial distribution and~\ref{it:F--2} follows from~\ref{it:F--1} 
and~\ref{it:F--3}. The proof of~\ref{it:F--3} will make use of Spencer's extension lemma (Theorem~\ref{thm:Spencer}
stated below). Finally,~\ref{it:F--4} follows from a standard second moment argument.
Below we introduce the necessary notation for the statement of Theorem~\ref{thm:Spencer}.

For a graph~$H$ and an ordered proper subset $R=(x_1,\dots,x_r)$ of $V(H)$
the pair $(R,H)$ is called \textit{rooted graph} with \textit{roots}~$R$.
For an induced subgraph $H'=H[S]$ of $H$ with $\{x_1,\dots,x_r\}\subsetneq S$ we say 
$(R,H')$ is a rooted subgraph of $(R,H)$. We define the density of a rooted graph $(R,H)$ by 
\[
	\dens(R,H)=\frac{e(H)-e(H[R])}{v(H)-|R|}\,.
\]

Let $V(H)\setminus \{x_1,\dots,x_r\}=\{y_1,\dots,y_\nu\}$ for some $\nu\geq 1$.
For a graph~$G$  with some marked vertices $(x_1^\prime,\dots,x_r^\prime)$ an ordered tuple $(y_1^\prime,\dots,y_\nu^\prime)$ is called an \textit{$(R,H)$-extension of $(x_1^\prime,,\dots,x_r^\prime)$} if 
\begin{itemize}
\item the $y_i^\prime$ are distinct from each other and from the $x_j^\prime$,
\item $\{ x_i^\prime,y_j^\prime\}\in E(G) $ whenever $\{x_i,y_j\}\in E(H)$ and
\item $\{ y_i^\prime,y_j^\prime\}\in E(G)$ whenever $\{y_i,y_j\}\in E(H)$.
\end{itemize}
The number of $(R,H)$-extensions $(y_1^\prime,\dots,y_\nu^\prime)$ is denoted by $N(x_1^\prime,\dots,x_r^\prime)$.
Finally, we define $\mad(R,H)$ as the maximal average degree of a rooted graph $(R,H)$ by
\[
\mad(R,H)=\max\{\dens(R,H')\colon (R,H')\text{ is rooted subgraph of }(R,H)\}\,.
\]

\begin{theorem}[{\cite{Sp90}*{Theorem~3}}]\label{thm:Spencer}
Let $(R,H)$ be an arbitrary rooted graph and let $\eps>0$.
Then there exist $t$ such that if $p\geq n^{-1/\mad(R,H)}(\log n)^{1/t}$ then a.a.s.\ in~$G(n,p)$
\[(1-\eps)\E[N(\boldsymbol{x'})] <N(\boldsymbol{x'})<(1+\eps )\E[N(\boldsymbol{x'})]\]
for all~$\boldsymbol{x'}=(x'_1,\dots,x'_r)$ chosen from $[n]$.\qed
\end{theorem}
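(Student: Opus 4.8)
The statement is Spencer's extension (counting) lemma, so below I describe how its proof goes (the full argument is in~\cite{Sp90}). Write $m=\mad(R,H)$ and $\nu=v(H)-|R|$. The plan is an induction on~$\nu$; the point of the hypothesis on~$p$ is that it makes $n^{\nu'}p^{\,e(H')-e(H'[R])}\ge(\log n)^{\nu'\dens(R,H')/t}\to\infty$ for \emph{every} rooted subgraph $(R,H')$ of $(R,H)$ with $\nu'=v(H')-|R|\ge1$ non-root vertices (using $\dens(R,H')\le m$), so $\E[N_{(R,H')}]\to\infty$ throughout. I would fix the error tolerance at each level of the recursion to be $\eps/C_H$ for a constant $C_H$ depending only on~$H$, so the bounded number of multiplicative errors accumulated along the recursion tree stays below~$\eps$, and fix the exponent~$t$ only at the very end, small enough for all the polylogarithmic gaps that appear.

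For the base cases: when $\nu=1$, $N(\boldsymbol{x'})$ is a sum of $n-|R|$ independent indicators (one per candidate image of the unique non-root vertex), each of probability $p^{d}$ with $d=\dens(R,H)\le m$, so a Chernoff bound and a union bound over the at most $n^{|R|}$ tuples $\boldsymbol{x'}$ suffice, provided $np^{d}=(\log n)^{d/t}$ beats a large multiple of $\log n$, i.e.\ provided $t<d$. The genuinely hard base case is when $(R,H)$ is \emph{strictly balanced} ($\dens(R,H)=m$ and every proper rooted subgraph has density strictly below~$m$): one cannot build $N$ up one vertex at a time, since a single added vertex of degree $>m$ already fails to concentrate. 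Instead I would estimate the moments $\E[N^{k}]$ directly: expanding over $k$-tuples of extensions and grouping by overlap pattern, the pairwise-disjoint term contributes $(1+o(1))\E[N]^{k}$, while any overlap on a nonempty vertex set $S$ costs a factor $n^{-|S|(1-\dens(R,H[S\cup R])/m)}\le n^{-c_H|S|}$ with $c_H>0$ coming from the \emph{strict} density deficit; since the number of patterns is only $\exp(O(k\log k))$, this gives $\E[N^{k}]=(1+o(1))\E[N]^{k}$ for all $k\le n^{\Omega(1)}$. Taking $k=\Theta_H(\log n)$ and applying Markov to $N^{k}$ (with a matching lower-tail bound, e.g.\ from the Kim--Vu polynomial concentration inequality for the multilinear polynomial~$N$) yields a tail that beats the union bound over all $n^{|R|}$ tuples $\boldsymbol{x'}$, once $t$ is small enough.

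For the inductive step proper, assume $\nu\ge2$ and $(R,H)$ is not strictly balanced, and let $(R,H^{*})$ be a rooted subgraph with $\dens(R,H^{*})=m$ and $v(H^{*})$ minimal; put $Y^{*}=V(H^{*})\setminus R$ and $\nu^{*}=|Y^{*}|$, so $1\le\nu^{*}<\nu$. The key structural fact is $\mad(R\cup Y^{*},H)\le m$: for an induced rooted subgraph $H'\supseteq H^{*}$ one has $H'[R\cup Y^{*}]=H^{*}$, hence $e(H')-e(H^{*})=\nu'\dens(R,H')-\nu^{*}m\le(\nu'-\nu^{*})m$, so $\dens(R\cup Y^{*},H')\le m$. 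Since also $\mad(R,H^{*})\le m$, the induction hypothesis applies to both $(R,H^{*})$ and $(R\cup Y^{*},H)$, each with fewer than $\nu$ non-root vertices and with the same threshold on~$p$. Every $(R,H)$-extension of $\boldsymbol{x'}$ splits uniquely into an $(R,H^{*})$-extension $\boldsymbol{w}$ of $\boldsymbol{x'}$ placing $Y^{*}$, followed by an $(R\cup Y^{*},H)$-extension of $(\boldsymbol{x'},\boldsymbol{w})$, so
\[
  N_{(R,H)}(\boldsymbol{x'})=\sum_{\boldsymbol{w}}N_{(R\cup Y^{*},H)}(\boldsymbol{x'},\boldsymbol{w}),
\]
the sum over $(R,H^{*})$-extensions $\boldsymbol{w}$ of $\boldsymbol{x'}$. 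By the induction hypothesis the number of summands is $(1\pm\eps')\E[N_{(R,H^{*})}]$ uniformly over $\boldsymbol{x'}$, and each summand is $(1\pm\eps')\E[N_{(R\cup Y^{*},H)}]$ uniformly over all pairs $(\boldsymbol{x'},\boldsymbol{w})\in[n]^{|R|+\nu^{*}}$; multiplying, and using $\E[N_{(R,H^{*})}]\E[N_{(R\cup Y^{*},H)}]=(1+o(1))\E[N_{(R,H)}]$, closes the step.

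The main obstacle is the strictly balanced base case: no vertex-by-vertex reduction is available, and the second moment alone only gives $o(1)$ failure probability for a fixed~$\boldsymbol{x'}$, far too weak for the union bound over all $n^{|R|}$ tuples. Getting sharp concentration of $N$ around an only polylogarithmically large mean — via the high-moment estimate above, powered by the \emph{strict} density gap of proper rooted subgraphs — is the heart of the argument, and it is exactly what necessitates the extra $(\log n)^{1/t}$ slack in the hypothesis on~$p$.
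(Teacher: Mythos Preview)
The paper quotes this result from~\cite{Sp90} without proof (the trailing \qed\ marks it as an external citation), so there is no in-paper argument to compare against. Your outline is faithful to Spencer's original strategy: induction on the number~$\nu$ of non-root vertices, with the reduction step via a minimal densest rooted subgraph $(R,H^*)$ --- your verification that $\mad(R\cup Y^*,H)\le m$ is correct --- and the strictly balanced case singled out as the genuine base case.

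There is one real gap, in your treatment of the strictly balanced case. Markov's inequality applied to the raw moments $N^k$ controls only $\P\big(N>(1+\eps)\mu\big)$; it says nothing about the lower tail $\P\big(N<(1-\eps)\mu\big)$, and your ``e.g.\ Kim--Vu'' is a pointer to a different (and, relative to~\cite{Sp90}, anachronistic) theorem whose hypotheses you have not verified, not an argument. What Spencer actually does is bound the \emph{central} moments: writing $N-\mu=\sum_\phi(\mathds{1}_\phi-p^{e_H})$ and expanding $\E[(N-\mu)^{2k}]$ as a sum over $2k$-tuples, any factor whose copy is edge-disjoint from the rest contributes mean zero and kills the term, so every surviving tuple carries a nontrivial overlap; the strict density gap then forces $\E[(N-\mu)^{2k}]\le C_k\,\mu^{k}$ for $k$ up to a suitable power of $\log n$, and Markov applied to $(N-\mu)^{2k}$ delivers both tails at once with enough room for the $n^{|R|}$-fold union bound. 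A smaller point: your overlap-cost bound $n^{-|S|(1-\dens(R,H[S\cup R])/m)}$ tacitly assumes the two copies agree \emph{on the same labels} (that is, $\phi_1(y)=\phi_2(y)$ for $y\in S$). For general overlap patterns one instead processes the copies in some order, lets $S_i\subseteq V(H)\setminus R$ be the non-root vertices of copy~$i$ whose images already appear among earlier copies, and uses that $(R,H[R\cup S_i])$ is a proper rooted subgraph whenever $S_i$ is nonempty and proper; the same strict-balance gap then applies copy by copy.
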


\begin{proof}[Proof of Lemma~\ref{lem:F--inGnp}]
\ref{it:F--1} This follows from an application of Chernoff's inequality.

\ref{it:F--2} As already mentioned this property follows from~\ref{it:F--1} 
and~\ref{it:F--3}. However, here is a standard direct proof based on the subgraph containment threshold in random graphs.
 
For $F_-\in\cF_{-}$ 
let~$X$ be the random variable that counts the number of copies of~$F_-$ contained in~$G(n,p)$.
Using that $p=\Theta(n^{-1/m_2(F)})$ combined with the balancedness of~$F$ yields
\[
\E[X]
= 
\Theta\left(n^{v(F)}p^{e(F)-1}\right)=\Theta(n^2)\,. 
\]
Moreover, by the definition of the $2$-density the expected number of copies of every non-trivial 
subgraph of $F_-\subset F$ is of order $\Omega(pn^2)$ and tends to infinity for $n\to\infty$.
Consequently,~$X$ converges to $\E[X]$ in probability (see, e.g.,~\cite{JLR00}*{Remark~3.7})
and we have $\P(X\geq 2\E[X])\rightarrow 0$ for $n\to\infty$.
Summing over all $F_-\in \cF_{-}$ yields the claim. 

\ref{it:F--3} Consider a graph $F_-\in \cF_-$ and remove some edge $\{x_1,x_2\}$ from~$F_-$ and call the resulting graph $F_{-2}$.  For $e\in \binom{[n]}{2}$ let~$X_e$ be the random variable that counts the number of copies of~$F_{-2}$ that build a copy of~$F_-$ by adding~$e$ and let~$X$ be the random variable that counts the number of copies of $F_{-2}$ contained in $G(n,p)$.

Now we can use Spencer's extension lemma (Theorem~\ref{thm:Spencer}).
We consider the rooted graph $((x_1,x_2),F_{-})$.
Let $\hat{F}$ be an induced  subgraph of $F_{-}$ such that $((x_1,x_2),\hat F)$ is a rooted subgraph of $((x_1,x_2),F_{-})$ 
which maximizes the density $\dens((x_1,x_2),\hat F)$.
Since the graph~$F\supsetneq  F_{-}\supseteq \hat F$ is strictly balanced we have
\[
m_2(F)>d_2(\hat{F})=\frac{e(\hat{F})-1}{v(\hat{F})-2}=\dens((x_1,x_2),\hat{F})=\mad((x_1,x_2),F_-)\,.
\]

Consequently, Theorem~\ref{thm:Spencer} applied with $\eps=1$ implies a.a.s.\
\[
	N(x'_1,x'_2)\leq 2\E(X_e)=O(p^{e(F)-2}n^{v(F)-2})
\]
for every $x'_1\neq x'_2\in[n]$. Owing to $p=\Theta(n^{-1/m_2(F)})$ and the (strict) balancedness of $F$
we have that $p^{e(F)}n^{v(F)}=\Theta(pn^2)$ and, consequently, for sufficiently large $D$
the claim follows by summing over all choices of $F_-\in\cF_{-}$ and $\{x_1,x_2\}\in E(F_-)$.

\ref{it:F--4} We show that this property holds a.a.s.\ for 
\begin{equation}\label{eq:14delta}
	\delta=\frac{1}{6} \min \Big\{\tfrac{1}{m_2(F)},1-\tfrac{1}{m_2(F)}\Big\} 
\end{equation}
and some $D>0$ independent of $n$. In the proof below we distinguish several cases.
In the first case we only look at configurations from $\cP_{2}(G(n,p),e_1,e_2)$.
Afterwards we consider configurations from $\cP_{s}(G(n,p),e_1,e_2)$ for~$s>2$.

\subsection*{Case 1: \texorpdfstring{$s=2$}{s=2}}
For two pairs $e_1\neq e_2\in \binom{[n]}{2}$ let $X_{e_1,e_2}$ be the random variable given by 
$|\cP_{2}(G(n,p),e_1,e_2)|$ and denote by~$v_1$ and~$u_1$ the elements of $e_1$ and by~$v_2$ and~$u_2$ the elements of~$e_2$.
We want to use Chebyshev's Inequality to obtain the claimed bound for most pairs.
Consequently, we estimate the expectation and variance of~$X_{e_1,e_2}$.
We distinguish between the cases $e_1\cap e_2 =\emptyset$ and $\vert e_1\cap e_2 \vert =1$. 

First let $e_1\cap e_2=\emptyset$. Since $C_0n^{-1/m_2(F)}\leq p\leq C_1n^{-1/m_2(F)}$ and~$F$ is strictly balanced we have $n^{v(F)}p^{e(F)}=\Theta(pn^2)$ and 
\begin{equation}
n^{v(F)-2} p^{e(F)-1}\leq C_1^{e(F)-1}\,.\label{eq:n^F-2 p^E-1=c}
\end{equation}
For $F_0\subseteq F$ with $v(F_0)\geq 2$ it follows from~$F$ being strictly balanced that there is some $d>0$ only depending on~$F$ and $C_0$ such that
\begin{equation}
n^{v(F_0)}p^{e(F_0)}\geq dpn^2\,.\label{eq:F_0 d}
\end{equation}
The expectation of~$X_{e_1,e_2}$ is

\begin{equation}\label{eq:exp}
\E[X_{e_1,e_2}]\leq e(F)^4 n^{2v(F)-6} p^{2e(F)-4}
\overset{\eqref{eq:n^F-2 p^E-1=c}}{\leq} e(F)^4 C_1^{2e(F)-2}n^{-2}p^{-2}
\end{equation}
and $\E[X_{e_1,e_2}]\rightarrow 0$ for $n$ tending to infinity since $p=\Theta(n^{-1/m_2(F)})$ and~$m_2(F)>1$.

Now we estimate the variance of $X_{e_1,e_2}$. We will show
\begin{equation*}
\Var (X_{e_1,e_2})\leq \frac{c}{n^2 p^2}\left(1+\frac{1}{np^2}\right)
\end{equation*}
for some constant~$c>0$ depending only on~$F$, $C_0$ and~$C_1$.
For this purpose let $(F_a,F_b)$ and $(F_c,F_d)$ be two different pairs of graphs that contribute to the number 
$|\cP_{2}(G(n,p),e_1,e_2)|$ with 
\[
	V(F_a)\cap V(F_b)=\{x_1,x_2\},\quad
	V(F_c)\cap V(F_d)=\{y_1,y_2\}
\]
and
\[
	V(F_a)\cap V(F_c)\supseteq \{u_1,v_1\} = e_1,\quad
	V(F_b)\cap V(F_d)\supseteq \{u_2,v_2\} = e_2\,.
\] 
Recall that $e_1$ and $e_2$ are by definition of
$\cP_{2}(G(n,p),e_1,e_2)$ not necessarily contained in $G(n,p)$ and they are 
not contained as edges in any of the subgraphs $F_a$, $F_b$, $F_c$, and $F_d$ (where
$s=2$ is used).
We denote by~$\cP_{e_1,e_2}^2$ the family of isomorphism types of possible pairs $((F_a,F_b),(F_c,F_d))$ such that the conditions above are satisfied. If it is clear from the context we will sometimes drop the subscripts~$e_1$ and $e_2$ to further ease the notation.

For $Q=((F_a,F_b),(F_c,F_d))\in \cP_{e_1,e_2}^2$ let $\cS_Q$ be the set of subsets of~$[n]$ of size $v(F_a\cup F_b\cup F_c\cup F_d)$ that contain $u_1$, $v_1$, $u_2$, and $v_2$. For $S\in\cS_Q$ let $1_S$ be the indicator random variable for the event ``there exists a copy of~$Q$ in~$G(n,p)$ on the vertex set~$S$''. Then
\begin{align}
\Var(X_{e_1,e_2})&\leq \E[X_{e_1,e_2}]+\sum_{Q\in\cP_{e_1,e_2}^2}\sum_{S\in \cS_Q} \P(1_S=1)\label{eq:Vare1e2}
\end{align}

For the estimation of the term $\sum_{Q\in\cP^2}\sum_{S\in \cS_Q} \P(1_S=1)$
we use the following notation.
For $\alpha, \beta \in \{a,b,c,d\}$ and $\square\in\{\cup,\cap\}$ we set 
\[
	v_{\alpha\square\beta}=v(F_\alpha\square F_\beta)\qand
	e_{\alpha\square\beta}=e(F_\alpha\square F_\beta)\,,
\]	 
where $F_\alpha\cap F_\beta$ and $F_\alpha\cup F_\beta$ denotes the normal union and intersection of two graphs.
Moreover, we can extend this to longer expressions of unions and intersections, like $v_{(\alpha\cap\beta)\cup\gamma}$,  and we will make use of this short hand notation in the calculations below.
We also set 
\begin{equation}\label{eq:absm}
	v_{\alpha\setminus \beta}=v_\alpha-v_{\alpha\cap \beta}\qqand
	e_{\alpha\setminus \beta}=e_\alpha-e_{\alpha\cap\beta}\,.
\end{equation}
Note that $e_{\alpha\setminus \beta}$ denotes the number of edges exclusively contained in $F_\alpha$, which 
does not necessarily coincide with $e(F_\alpha- V(F_\beta))$.
We estimate~$\sum_{Q\in\cP^2}\sum_{S\in \cS_Q} \P(1_S=1)$ by counting the number of choices for the vertices of the desired configuration and determine the number of needed edges.
Recalling that every $Q\in\cP_{e_1,e_2}^2$ corresponds to $((F_a,F_b),(F_c,F_d))$ we count those by first choosing~$(F_a,F_b)$, then~$F_c$ and then~$F_d$ and deal with the vertices and edges that are counted several times by looking at the intersections between the different copies of~$F$.

\begin{align}
&\sum_{Q\in\cP^2}\sum_{S\in \cS_Q} \P(1_S=1)\nonumber\\
&\qquad\overset{\phantom{\eqref{eq:absm}}}{\leq} \sum_{Q\in\cP^2} (4v(F)) ! \cdot n^{2v(F)-6}p^{2e(F)-4}\cdot n^{v_{c\setminus (a\cup b)}}p^{e_{c\setminus (a\cup b)}}\cdot n^{v_{d\setminus (a\cup b\cup c)}}p^{e_{d\setminus (a\cup b\cup c)}}\label{eq:VarFall1}\\
&\qquad\overset{\eqref{eq:absm}}{=}\!(4v(F)) ! \! \sum_{Q\in\cP^2} n^{4v(F)-6}p^{4e(F)-8}\cdot n^{-v_{c\cap (a\cup b)}}p^{-e_{c\cap (a\cup b)}}\cdot n^{-v_{d\cap (a\cup b\cup c)}}p^{-e_{d\cap (a\cup b\cup c)}}\nonumber\\
&\qquad\overset{\phantom{\eqref{eq:absm}}}{=}\!(4v(F)) !\!\sum_{Q\in\cP^2}\!\! n^2p^{-4}(n^{v(F)-2}p^{e(F)-1})^4 n^{-v_{c\cap (a\cup b)}}p^{-e_{c\cap (a\cup b)}} n^{-v_{d\cap (a\cup b\cup c)}}p^{-e_{d\cap (a\cup b\cup c)}}\nonumber\\
&\qquad\overset{\eqref{eq:n^F-2 p^E-1=c}}{\leq} C \sum_{Q\in\cP^2} n^2p^{-4}\cdot n^{-v_{c\cap (a\cup b)}}p^{-e_{c\cap (a\cup b)}}\cdot n^{-v_{d\cap (a\cup b\cup c)}}p^{-e_{d\cap (a\cup b\cup c)}}\,, \label{eq:Var0}
\end{align}
where $C>0$ is a constant depending only on~$F$ and~$C_1$.
For the estimation of
\begin{equation}\label{eq:fP}
f_Q(n,p):=n^2p^{-4}\cdot n^{-v_{c\cap (a\cup b)}}p^{-e_{c\cap (a\cup b)}}\cdot n^{-v_{(a\cup b\cup c)\cap d}}p^{-e_{(a\cup b\cup c)\cap d}}
\end{equation}
we distinguish several cases depending on the structure of~$Q$.

First we consider terms in~\eqref{eq:Var0} with~$\{x_1,x_2\}\subseteq V(F_c)$.
Since $\{x_1,x_2,v_1,u_1\}\subseteq V(F_a\cap F_c)$ and $F_a\cap F_c\subseteq F_a\subset F$ 
we know $F_0:=(F_a\cap F_c)+\{x_1,x_2\}+e_1\subseteq F$.
Therefore,
\[\frac{1}{n^{v_{a\cap c}}p^{e_{a\cap c}}}= \frac{p^2}{n^{v(F_0)}p^{e(F_0)}}\overset{\eqref{eq:F_0 d}}{\leq} \frac{p^2}{dpn^2}=\frac{p}{dn^{2}}\,.\]

Similarly, $(F_b\cap F_c)+\{x_1,x_2\}\subseteq F$ and $((F_a\cup F_b\cup F_c)\cap F_d)+\{y_1,y_2\}+e_2\subseteq F$. 
The same argument yields
 \[
\frac{1}{n^{v_{b\cap c}}p^{e_{b\cap c}}}\leq \frac{1}{dn^2} \quad\tand \quad
 \frac{1}{n^{v_{(a\cup b\cup c)\cap d}}p^{e_{(a\cup b\cup c)\cap d}}}\leq\frac{p}{dn^{2}}\,.
 \]

Applying these bounds and the facts that $v_{a\cap b\cap c}\leq 2$ and $e_{a\cap b\cap c}=0$
to~\eqref{eq:fP} yields

\begin{multline}
f_Q(n,p)
 = n^2p^{-4}\cdot n^{-v_{a\cap c}}p^{-e_{a\cap c}}\cdot n^{-v_{b\cap c}}p^{-e_{b\cap c}}\cdot n^{v_{a\cap b\cap c}}\cdot n^{-v_{(a\cup b\cup c)\cap d}}p^{-e_{(a\cup b\cup c)\cap d}}\\
\leq n^2p^{-4}\cdot\frac{p}{dn^2}\cdot \frac{1}{dn^2}\cdot n^2\cdot \frac{p}{dn^2}
=\frac{1}{d^3p^2n^2}\label{eq:fp1} \,.
\end{multline}
By symmetry we obtain the same estimate in the case that~$\{x_1,x_2\}\subseteq V(F_d)$
and in the remaining case we may assume
\begin{enumerate}[label=\Ilabelb]
\item \label{eq:Schnitt1} $\vert V(F_c)\cap \{x_1,x_2\}\vert\leq 1$ and $\vert V(F_d)\cap \{x_1,x_2\}\vert\leq 1$.
\end{enumerate}

Next we consider those terms in~\eqref{eq:Var0} with~\ref{eq:Schnitt1} and $v_{b\cap c}\geq 2$.
By~\ref{eq:Schnitt1} we have $v_{a\cap b\cap c}\leq 1$.
We proceed in a similar way as above.
This time we use that $(F_a\cap F_c)+e_1\subseteq F$ and similarly that $((F_a\cup F_b\cup F_c)\cap F_d)+e_2+\{y_1,y_2\}\subseteq F$ and, therefore, 
\[
\frac{1}{n^{v_{a\cap c}}p^{e_{a\cap c}}}\overset{\eqref{eq:F_0 d}}{\leq}\frac{1}{dn^2}
\qqand 
\frac{1}{n^{v_{(a\cup b\cup c)\cap d}}p^{e_{(a\cup b\cup c)\cap d}}}\overset{\eqref{eq:F_0 d}}{\leq} \frac{p}{dn^2}\,.
\]
Moreover, since we assume $v_{b\cap c}\geq 2$ we can apply \eqref{eq:F_0 d} with
$F_0=F_b\cap F_c$
\[
	\frac{1}{n^{v_{b\cap c}}p^{e_{b\cap c}}}\leq \frac{1}{dpn^2}\,.
\]
Combining these bounds with~\eqref{eq:fP} and $v_{a\cap b\cap c}\leq 1$ and $e_{a\cap b\cap c}=0$ yields
\begin{multline}
f_Q(n,p)
\leq n^2p^{-4}\cdot n^{-v_{a\cap c}}p^{-e_{a\cap c}}\cdot n^{-v_{b\cap c}}p^{-e_{b\cap c}}\cdot n\cdot n^{-v_{(a\cup b\cup c)\cap d}}p^{-e_{(a\cup b\cup c)\cap d}}\\
\leq n^2p^{-4}\cdot \frac{1}{dn^2}\cdot \frac{1}{dpn^2}\cdot n\cdot \frac{p}{dn^2}
=\frac{1}{d^3p^4n^3}\label{eq:fp2}\,.
\end{multline}

Next we consider the subcase of~\ref{eq:Schnitt1} when 
\[
	v_{b\cap c} =1 \qqand V(F_c)\cap\{x_1,x_2\}=\emptyset\,.
\]
Then we have $e_{b\cap c}=0$ and $v_{a\cap b\cap c}=0$.
Since $(F_a\cap F_c)+e_1\subseteq F$ and $((F_a\cup F_b\cup F_c)\cap F_d)+e_2+\{y_1,y_2\}\subseteq F$ we get
\[\frac{1}{n^{v_{a\cap c}}p^{e_{a\cap c}}}\overset{\eqref{eq:F_0 d}}{\leq}\frac{1}{dn^2}
\qqand \frac{1}{n^{v_{(a\cup b\cup c)\cap d}}p^{e_{(a\cup b\cup c)\cap d}}}\overset{\eqref{eq:F_0 d}}{\leq} \frac{p}{dn^2}\,.\]

Consequently, in this case we have
\begin{align}
f_Q(n,p)
&= n^2p^{-4}\cdot n^{-v_{a\cap c}-v_{b\cap c}+v_{a\cap b\cap c}-v_{(a\cup b\cup c)\cap d}}p^{-e_{a\cap c}-e_{b\cap c}+e_{a\cap b\cap c}-e_{(a\cup b\cup c)\cap d}}\nonumber\\
&\leq n^2p^{-4}\cdot n^{-v_{a\cap c}}p^{-e_{a\cap c}}\cdot n^{-1}\cdot n^{-v_{(a\cup b\cup c)\cap d}}p^{-e_{(a\cup b\cup c)\cap d}}\nonumber\\
&\leq n^2p^{-4}\cdot \frac{1}{dn^2}\cdot n^{-1}\cdot \frac{p}{dn^2}
=\frac{1}{d^2p^3n^3}\label{eq:fp3}\,.
\end{align}

For the last remaining cases we consider summands in~\eqref{eq:Var0} with~\ref{eq:Schnitt1} and 
\begin{enumerate}[label=\Alabel]
\item either $v_{b\cap c}=1$ and $V(F_c)\cap \{x_1,x_2\}\neq \emptyset$ 
(and, hence,  $V(F_b)\cap V(F_c)\subsetneq\{x_1,x_2\}$),
\item or $v_{b\cap c}=0$.
\end{enumerate}
In both cases together with~\ref{eq:Schnitt1} we get
\begin{equation}\label{eq:bdac}
v_{b\cap (a\cup c)\cap d}=|\{x_1,x_2\}\cap V(F_d)| \leq 1\,.
\end{equation}
Based on~\eqref{eq:bdac} we treat both subcases in same way.
We consider $((F_a\cup F_b)\cap F_c)+e_1\subseteq F$, $(F_b\cap F_d)+e_2\subseteq F$ and $((F_a\cup F_c)\cap F_d)+\{y_1,y_2\}\subseteq F$ and get
\[
\frac{1}{n^{v_{(a\cup b)\cap c}}p^{e_{(a\cup b)\cap c}}}\overset{\eqref{eq:F_0 d}}{\leq}\frac{1}{dn^2}\, , \,
\frac{1}{n^{v_{b\cap d}}p^{e_{b\cap d}}}\overset{\eqref{eq:F_0 d}}{\leq} \frac{1}{dn^2}\qqand
\frac{1}{n^{v_{(a\cup c)\cap d}}p^{e_{(a\cup c)\cap d}}}\overset{\eqref{eq:F_0 d}}{\leq} \frac{1}{dn^2}\,,
\]
which leads to
\begin{align}
f_Q(n,p)&\overset{\phantom{\eqref{eq:bdac}}}{=} n^2p^{-4}\cdot n^{-v_{(a\cup b)\cap c}-v_{b\cap d}-v_{(a\cup c)\cap d}+v_{b\cap (a\cup c)\cap d}}\cdot p^{-e_{(a\cup b)\cap c}-e_{b\cap d}-e_{(a\cup c)\cap d}+e_{b\cap (a\cup c)\cap d}}\nonumber\\
&\overset{\eqref{eq:bdac}}{\leq} n^2p^{-4}\cdot n^{-v_{(a\cup b)\cap c}}p^{-e_{(a\cup b)\cap c}}\cdot n^{-v_{b\cap d}}p^{-e_{b\cap d}}\cdot n^{-v_{(a\cup c)\cap d}}p^{-e_{(a\cup c)\cap d}}\cdot n\nonumber\\
&\overset{\phantom{\eqref{eq:bdac}}}{\leq} n^2p^{-4}\cdot \left(\frac{1}{dn^2}\right)^3\cdot n
=\frac{1}{d^3p^4n^3}\label{eq:fp4}\,.
\end{align}

Using the bounds from~\eqref{eq:fp1}, \eqref{eq:fp2}, \eqref{eq:fp3} and~\eqref{eq:fp4} and $pn\rightarrow\infty$ for $n\rightarrow\infty$ we summarize that there are constants $c^\prime,c>0$ only depending on $F,C_0$ and~$C_1$ such that for sufficiently large~$n$
\[
f_Q(n,p)\leq c^\prime\left(\frac{1}{p^2n^2}+\frac{1}{p^4n^3}\right)\,.
\]
Since the sum in~\eqref{eq:Var0} has finitely many summands, together with~\eqref{eq:Vare1e2} and~\eqref{eq:Var0} it follows that
\begin{equation}
\Var(X_{e_1,e_2})\leq \frac{c}{p^2n^2}\left(1+\frac{1}{p^2n}\right). \label{eq:Var2}
\end{equation}

Recall that we want to show that there are at most $Dpn^2n^{-\delta}$ pairs of edges $e_1,e_2$ in~$G(n,p)$ so that $X_{e_1,e_2}>Dp^{-1}n^{-\delta}$ for some constant~$D>0$ independent of~$n$ and $\delta>0$ chosen 
in~\eqref{eq:14delta}.
For this purpose we use Markov's Inequality and Chebyshev's Inequality.
Let $t=p^{-1}n^{-\delta}$, then 
Chebyshev's Inequality tells us
\[\P(X_{e_1,e_2}\geq \E[X_{e_1,e_2}]+t)\leq \frac{\Var(X_{e_1,e_2})}{t^2}\,.\]

Let~$X$ be the number of pairs $(e_1,e_2)\in\binom{E(Z)}{2}$ with $X_{e_1,e_2}\geq 2p^{-1}n^{-\delta}$ and~$e_1\cap e_2=\emptyset$.
Since $\E[X_{e_1,e_2}]\leq t$ we have
\begin{align}
\E[X]&\leq \binom{pn^2}{2}\P(X_{e_1,e_2}\geq \E[X_{e_1,e_2}]+t) \label{eq:Var3}\\
&\leq \frac{p^2n^4}{2}\cdot \frac{cp^2 n^{2\delta}}{p^2n^2}\left(1+\frac{1}{np^2}\right) 
=\frac{1}{2}cp^2n^{2+2\delta}\left(1+\frac{1}{np^2}\right)\label{eq:Var4}.
\end{align}

We distinguish the cases $n^{-1}p^{-2}> 1$ and $n^{-1}p^{-2}\leq1$.
For $n^{-1}p^{-2}> 1$ we have for sufficiently large~$n$
\[\E[X]\leq \frac{cp^2n^{2+2\delta}}{np^2} \leq cn^{1+2\delta}\leq pn^{2-2\delta},\]
where the last inequality follows from our choice of~$\delta<\frac{1}{4}(1-\frac{1}{m_2(F)})$.

For the case $n^{-1}p^{-2}\leq 1$ we have for sufficiently large~$n$
\[\E[X]\leq cp^2n^{2+2\delta}\leq pn^{2-2\delta}\]
where the last inequality follows by the choice of $\delta<\frac{1}{4m_2(F)}$.
Consequently, $\E[X]\leq pn^{2-2\delta}$ and by Markov's Inequality 
\[\P(X>pn^{2-\delta})\leq\frac{\E[X]}{pn^{2-\delta}}\leq n^{-\delta}\]
thus a.a.s.\ $X\leq pn^{2-\delta}$.
For sufficiently large~$n$ this finishes the case $e_1\cap e_2 =\emptyset$.

It remains the case when $\vert e_1\cap e_2\vert=1$.
Now let $e_1,e_2\in\binom{[n]}{2}$ with $\vert e_1\cap e_2\vert =1$.
We repeat essentially the same calculations of the first case $e_1\cap e_2=\emptyset$ with the following differences.

\begin{itemize}
\item For the expectation of~$X_{e_1,e_2}$ in~\eqref{eq:exp} we get
\[
	\E[X_{e_1,e_2}]=O\left(\frac{1}{np^2}\right)\,.
\]
\item For the variance we will show
\[
	\Var(X_{e_1,e_2})\leq \frac{c}{np^2}\left(1+\frac{1}{np^2}\right)\,.
\]
In the calculation of the variance there is essentially one difference compared to the case $e_1\cap e_2=\emptyset$.
In~\eqref{eq:VarFall1} we get
\[
	v_{a\cup b}-\vert\{x_1,x_2\}\cup \{v_1,u_1\}\cup \{v_2,u_2\}\vert \leq 2v(F)-5
\]
instead of $2v(F)-6$ which leads to an additional $n$ factor.
This $n$ factor carries over to
\begin{equation}
f_Q(n,p):=n^3p^{-4}\cdot n^{-v_{c\cap (a\cup b)}}p^{-e_{c\cap (a\cup b)}}\cdot n^{-v_{(a\cup b\cup c)\cap d}}p^{-e_{(a\cup b\cup c)\cap d}}
\end{equation}
in~\eqref{eq:fP}.

For the following case distinction we repeat in the case $\{x_1,x_2\}\subseteq V(F_c)$ the calculation, but keep the additional $n$ factor.
Consequently we get in~\eqref{eq:fp1}
\[
	f_Q(n,p)=O\left(\frac{1}{p^2n}\right) \,.
\]
Similarly we get with the additional $n$ factor in~\eqref{eq:fp2}
\[
	f_Q(n,p)=O\left(\frac{1}{p^4n^2}\right)\,.
\]

The case $v_{b\cap c}=1$ and $V(F_c)\cap \{x_1,x_2\}=\emptyset$ disappears since $F_b$ and $F_c$ intersect at least in $e_1\cap e_2\subseteq\{x_1,x_2\}$.
For the same reason the case $v_{b\cap c}=0$ disappears.
For the last remaining case in~\eqref{eq:fp4} we get again the same bound with an additional factor of $n$
\[
	f_Q(n,p)=O\left(\frac{1}{p^4n^2}\right)\,.
\]
Consequently
\[
	\Var(X_{e_1,e_2})\leq \frac{c}{np^2}\left(1+\frac{1}{np^2}\right)\,.
\]
\item The expectation still satisfies $\E[X_{e_1,e_2}]\leq t$ for the same choice of $t=p^{-1}n^{-\delta}$.
This follows since $\E[X_{e_1,e_2}]=O(\frac{1}{np^2})$, $t=\frac{1}{pn^\delta}$ and $\delta<1-\frac{1}{m_2(F)}$.
\item Let~$X^\prime$ be the number of pairs $(e_1,e_2)\in\binom{E(Z)}{2}$ satisfying $X_{e_1,e_2}\geq 2p^{-1}n^{-\delta}$ and $\vert e_1\cap e_2 \vert =1$.
We know by the condition~$\vert e_1\cap e_2 \vert =1$ that $X^\prime\leq 2p^2n^3$, thus we get with $X^\prime$ instead of $X$ in~\eqref{eq:Var3} a factor of $2p^2n^3$ instead of $\binom{pn^2}{2}$ which results in a factor of~$n^{-1}$ compared to the first case.
Consequently the $n^{-1}$ factor cancels with the $n$ factor above which leads to the same order of magnitude in~\eqref{eq:Var4}.
Then the rest of the proof is the same as in the first case.
\end{itemize}

Setting~$D^\prime\geq 2$ sufficiently large such that $2 p^{-1}n^{-\delta}\leq \frac{D^\prime pn^2}{n^\delta}$ then yields 
\begin{equation}\label{eq:case1}
	|\cP_{2}(Z,e_1,e_2)|\leq \frac{D^\prime}{pn^\delta}
\end{equation}
for all but at most $\frac{D^\prime pn^2}{n^\delta}$ pairs of edges $e_1,e_2\in E(Z)$.

\subsection*{Case 2: \texorpdfstring{$s>2$}{s>2}}
We consider configurations from $\cP_{s}(G(n,p),e_1,e_2)$ with $s>2$.
For two pairs $e_1\neq e_2\in \binom{[n]}{2}$ let $Y_{e_1,e_2}$ be the random variable given by 
$|\cP_{s}(G(n,p),e_1,e_2)|$.
Here it is sufficient to use Markov's inequality instead of Chebyshev's inequality which will allow us to avoid the calculation of the variance, but we still have to distinguish the cases $e_1\cap e_2=\emptyset$ and $\vert e_1\cap e_2\vert =1$.

For the first case let $e_1\cap e_2=\emptyset$.
The expectation of $Y_{e_1,e_2}$ is
\[
\E[Y_{e_1,e_2}]\leq e(F)^4 n^{2v(F)-4-s}v(F)^s p^{2e(F)-4}
\overset{\eqref{eq:n^F-2 p^E-1=c}}{\leq} e(F)^4v(F)^s C_1^{2e(F)-2}n^{-s}p^{-2}
\leq C^\prime n^{-3}p^{-2}
\]
with $C^\prime=e(F)^4v(F)^s C_1^{2e(F)-2}$.
We use Markov's inequality and get
\[
	\P\left(Y_{e_1,e_2}\geq \frac{1}{pn^\delta}\right)\leq C^\prime n^{-3}p^{-2}\cdot pn^\delta=C^\prime p^{-1}n^{-3+\delta}\,.
\]
Let $Y$ be the number of pairs $e_1,e_2\in E(Z)$ with $e_1\cap e_2=\emptyset$ and $Y_{e_1,e_2}\geq p^{-1}n^{-\delta}$.
Then
\[
	\E[Y]\leq \binom{pn^2}{2}C^\prime n^{-3+\delta}p^{-1}\leq \frac{C^\prime pn^{1+\delta}}{2} 
\]
and a second use of Markov's inequality yields
\[
	\P(Y\geq pn^{2-\delta})\leq \frac{C^\prime pn^{1+\delta}}{2 pn^{2-\delta}}=o(1)
\]
where the last inequality follows from our choice $\delta< 1/2$ and for sufficiently large~$n$.

We repeat the same proof for the case $\vert e_1\cap e_2\vert =1$ with the following differences.
\begin{itemize}
\item $\E[Y_{e_1,e_2}]\leq C^{\prime\prime}n^{-2} p^{-2}$ for some $C^{\prime\prime}>0$.
\item $\P\left(Y_{e_1,e_2}\geq \frac{1}{pn^\delta}\right)\leq C^{\prime\prime} p^{-1}n^{-2+\delta}$.
\item $\E[Y]\leq 2p^2n^3 C^{\prime\prime}p^{-1} n^{-2+\delta}\leq 2C^{\prime\prime} pn^{1+\delta}$.
\item $	\P(Y\geq pn^{2-\delta})\leq \frac{2C^{\prime\prime} pn^{1+\delta}}{pn^{2-\delta}}=o(1)$.
\end{itemize}

Consequently for all $s\geq 3$ we have $\vert \cP_{s}(G(n,p),e_1,e_2)\vert \leq p^{-1}n^{-\delta}$ for all but at most~$pn^{2-\delta}$ pairs of edges $e_1,e_2\in E(Z)$.
Together with~\eqref{eq:case1} this concludes the proof of \ref{it:F--4} and finishes the proof of Lemma~\ref{lem:F--inGnp}.
\end{proof}

The next lemma concerns property~\ref{it:F--B}, which bounds the number of bad embeddings as defined in  Definition~\ref{def:bad}.

\begin{lemma}\label{lem:G(BFnz)}
For all graphs~$B$ and all strictly balanced graphs~$F$, for all $C_1\geq C_0>0$ and for $C_0n^{-1/m_2(F)}\leq p\leq C_1n^{-1/m_2(F)}$ there exists $\zeta>0$ such that a.a.s.\  $G(n,p)$ satisfies~\ref{it:F--B}.
\end{lemma}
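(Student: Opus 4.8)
The plan is a first-moment estimate on the number of bad embeddings combined with Markov's inequality, with all the genuine content packed into one repeatedly-used \emph{extension estimate}. We may assume $F\ne K_2$ (otherwise no copy of $F$ can witness any of \ref{it:bad1}--\ref{it:bad3} and \ref{it:F--B} is vacuous), so $e(F)\ge 2$, whence $m_2(F)>1$, $e(F)\ge v(F)\ge 3$, and — since a pendant vertex would violate strict balancedness — $\delta(F)\ge 2$. Throughout $p=\Theta(n^{-1/m_2(F)})$. Strict balancedness says $d_2(F')<m_2(F)$ for every proper subgraph $F'\subsetneq F$ with $e(F')\ge 1$, which rearranges (using $m_2(F)=\tfrac{e(F)-1}{v(F)-2}$) to $m_2(F)\,(v(F)-v(F'))<e(F)-e(F')$; hence there is $c=c(F)>0$ with $n^{v(F)-v(F')}p^{e(F)-e(F')}=O(n^{-c})$ for every such $F'$ with $e(F')\ge 2$, whereas for a single edge $F'=K_2$ one only gets the critical bound $n^{v(F)-2}p^{e(F)-1}=\Theta(1)$. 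By Markov's inequality it suffices to exhibit $\zeta>0$ with $\E\big[\#\{h\in\Psi_{B,n}\colon h\text{ bad w.r.t.\ }F,G(n,p)\}\big]\le|\Psi_{B,n}|\,n^{-2\zeta}$, and I would bound the contributions of the three types of badness separately by fixing $h$, summing over the witnessing copies of $F$, and union-bounding over the required $Z$-edges.

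For \ref{it:bad1} the witnessing copy $F_1$ meets $h(B)$ in a proper subgraph of $F_1$ with at least two edges, so the $O(1)$ choices for its position inside the already-placed $h(B)$, times the extension estimate, give $\P(h\text{ is bad of type }1)=O(n^{-c})$ and hence $\E[\#\text{bad}_1]=O(|\Psi_{B,n}|\,n^{-c})$. For \ref{it:bad2} and \ref{it:bad3} I would first dispose of the case that some $F_i$ already meets $h(B)$ in at least two edges: that copy is treated exactly as in \ref{it:bad1}, and the other copy, built on top of $F_i\cup h(B)$, either lies entirely in $F_i\cup h(B)$ (only $O(1)$ choices, all its vertices being placed) or is attached over a subgraph with at least two edges (namely $e$ together with $f_2$, resp.\ $f$), contributing another factor $O(n^{-c})$. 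The remaining, genuinely delicate case is that each $F_j$ meets $h(B)$ in exactly its one distinguished edge $f_j$ (with $f_1=f_2=:f$ in case \ref{it:bad3}).

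Here I would build $F_1$ first from the single placed edge $f_1$, at critical cost $\Theta(1)$ per $h$, and then attach the rest of $F_2$ using the placed edge $f_2$. With $K:=F_2\cap(F_1\cup h(B))$ one checks that $K$ contains the two distinct edges $e$ (a $Z$-edge of $F_1$) and $f_2$, so if $K\subsetneq F_2$ the extension estimate supplies the missing factor $O(n^{-c})$. If instead $K=F_2$, then $F_2\subseteq F_1\cup h(B)$; in case \ref{it:bad3} this is impossible, since $E(F_2)\cap E(h(B))=\{f\}\subseteq E(F_1)$ would force $F_2=F_1$, whereas in case \ref{it:bad2} it yields $F_2=(F_1-f_1)+f_2$ and, using $\delta(F)\ge 2$, both endpoints of $f_2$ lie in $V(F_1)$; thus $F_1$ is actually pinned through the vertices of $V(f_1)\cup V(f_2)$, which number at least three because $f_1\ne f_2$, and this lowers the cost of placing $F_1$ to $O(n^{v(F)-3}p^{e(F)-1})=O(n^{-1})$ while $F_2$ is then uniquely determined. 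Summing over the finitely many configuration types gives $\E[\#\text{bad}]=O(|\Psi_{B,n}|\,n^{-c'})$ for some $c'=c'(F,B)>0$, and $\zeta:=c'/3$ finishes the proof; this is the analogue, for edge colourings, of the bad-configuration analysis in~\cite{FHPS14}.

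The main obstacle is exactly this last case: a single copy of $F$ dangling from one edge of $h(B)$ is genuinely common — $\Theta(1)$ per embedding, i.e.\ $\Theta(|\Psi_{B,n}|)$ in total — so one gains nothing from $F_1$ alone and must in every subcase extract a \emph{second}, quantitatively costly, structural feature: a further $h(B)$-edge, a properly denser extension, or the extra pinned vertex forced by $f_1\ne f_2$. Keeping track of which of these applies (and checking the $K=F_2$ dichotomies) is the bulk of the work; the density bookkeeping itself is routine given the extension estimate.
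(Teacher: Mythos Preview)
Your proof is correct and follows the paper's strategy: bound $\P(h\text{ bad})\le n^{-\xi}$ for each fixed $h$ via first-moment counts over the witnessing configurations, then apply Markov with $\zeta=\xi/2$. Your treatment of~\ref{it:bad1} and (after ruling out $K=F_2$) of~\ref{it:bad3} matches the paper's essentially verbatim.

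For~\ref{it:bad2} your decomposition differs from the paper's. You build $F_1$ at critical cost, then extend to $F_2$ through $K=F_2\cap(F_1\cup h(B))$, and must handle the degenerate subcase $K=F_2$ separately by invoking $\delta(F)\ge 2$ to force a third pinned vertex retroactively. The paper instead sums directly over the joint configuration $F_1\cup F_2$, indexed by the intersection $F_0=F_1\cap F_2$: since $f_1\neq f_2$ already pins at least three vertices of $F_1\cup F_2$ inside $h(B)$, and since $e\in E(F_0)$ gives $e(F_0)\ge 1$, the bound $n^{v(F_0)}p^{e(F_0)}\ge d\,pn^2$ (valid for all proper $F_0\subseteq F$ with at least one edge) yields
\[
\P(\text{\ref{it:bad2} and not \ref{it:bad1}})\le \sum_{F_0}v(B)^4\,n^{2v(F)-v(F_0)-3}p^{2e(F)-e(F_0)-2}=O\big((pn)^{-1}\big)
\]
in a single step. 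This avoids both the $K=F_2$ case split and any appeal to $\delta(F)\ge 2$. Your route is a legitimate alternative; the paper's is just the more economical bookkeeping, trading your two-stage build for one inclusion--exclusion count with the three-vertex anchoring imposed from the start.
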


\begin{proof}[Proof of Lemma~\ref{lem:G(BFnz)}]
We shall show that there exist a~$\xi>0$ such that for any given $h\in\Psi_{B,n}$ we have for sufficiently large~$n$
\[\P(h \text{ is bad w.r.t.\ } F\tand G(n,p))\leq n^{-\xi}\,.\]
 Then the lemma follows from Markov's inequality with~$\zeta=\xi/2$.

Let $h\in\Psi_{B,n}$ be fixed.
We first consider the case that~$h$ is bad w.r.t.\ $F$ and $G(n,p)$ because of~\ref{it:bad1}.
Since~$F$ is strictly balanced, for all proper subgraphs $F_0\subsetneq F$ with $e(F_0)\geq 2$ we have
\begin{align}
p^{e(F_0)}n^{v(F_0)} = pn^2 \cdot p^{e(F_0)-1}n^{v(F_0)-2}
 &\geq  pn^2\cdot C_0^{e(F_0)-1} n^{-\frac{1}{m_2(F)}(e(F_0)-1)+v(F_0)-2}\nonumber\\
&= pn^2\cdot C_0^{e(F_0)-1} n^{(e(F_0)-1)\big(\frac{v(F_0)-2}{e(F_0)-1}-\frac{1}{m_2(F)}\big)}\nonumber\\
&= pn^2\cdot C_0^{e(F_0)-1} n^{(e(F_0)-1)\big(\frac{1}{d_2(F_0)}-\frac{1}{d_2(F)}\big)}\geq pn^2\cdot n^{\xi^\prime}\label{eq:ex:F_0}
\end{align}
for some $\xi^\prime>0$.
We bound the probability for $h$ being bad because of case~\ref{it:bad1} by estimating the number of configurations leading to this event.
In this case $F_0$ stands for the part of $F$ that is contained in  $h(B)$ and hence consists of at least two edges. Using again $n^{v(F)-2}p^{e(F)-1}\leq C_1^{e(F)-1}$ yields
\begin{align*}
\P(h\text{ is bad by~\ref{it:bad1}})
&\overset{\phantom{\eqref{eq:ex:F_0}}}{\leq} \sum_{F_0\subsetneq F, e(F_0)\geq 2} v(B)^{v(F_0)} n^{v(F)-v(F_0)}p^{e(F)-e(F_0)}\\
&\overset{\eqref{eq:ex:F_0}}{\leq} \sum_{F_0\subsetneq F, e(F_0)\geq 2} v(B)^{v(F_0)} C_1^{e(F)-1} n^{-\xi^\prime}
\leq n^{-\xi_1}
\end{align*}
for some $\xi_1>0$ and sufficiently large~$n$.

When we address the case~\ref{it:bad2} we can assume that~$h$ is not bad because of case~\ref{it:bad1}.
Hence, it suffices to consider copies $F_1$ and $F_2$ of $F$ each intersecting $h(B)$ in precisely one edge 
and $F_0:=F_1\cap F_2$ having no edge in~$h(B)$.
Again we will use $n^{v(F)-2}p^{e(F)-1}\leq C_1^{e(F)-1}$ and that $n^{v(F_0)}p^{e(F_0)}\geq dpn^2$ for $F_0\subsetneq F$ with $e(F_0)\geq 1$ for some $d>0$ only depending on~$F$ and~$C_0$ (see~\eqref{eq:F_0 d}).
Note that two fixed edges of $h(B)$ determine at least three vertices of $F_1\cup F_2$. 
\begin{align*}
\P(h \text{ is bad by~\ref{it:bad2} and not by~\ref{it:bad1}})
&\leq \sum_{\substack{F_0\subsetneq F\\ e(F_0)\geq 1}} v(B)^4 n^{2v(F)-v(F_0)-3}p^{2e(F)-e(F_0)-2}\\
&\leq \sum_{F_0} v(B)^4 C_1^{2e(F)-2}\frac{n}{p^{e(F_0)}n^{v(F_0)}}\\
&\leq \sum_{F_0} v(B)^4 C_1^{2e(F)-2}\frac{1}{dpn}\\
&\leq \sum_{F_0} v(B)^4 C_1^{2e(F)-3}d^{-1}n^{-(1-\frac{1}{m_2(F)})}
\leq n^{-\xi_2}
\end{align*}
for some $\xi_2>0$ since $m_2(F)>1$.

For case~\ref{it:bad3} we assume that~$h$ is not bad because of case~\ref{it:bad1} or case~\ref{it:bad2}.
Again we bound the probability by the expected number of options to obtain a configuration as in~\ref{it:bad3}. 
In this case $F_0$ stands for the intersection of two different copies of~$F$ and includes at least two edges, $e$ and $f$ from~\ref{it:bad3}, where $f$ is also contained in $h(B)$.

\begin{align*}
\P(h \text{ is bad by~\ref{it:bad3} and not by~\ref{it:bad1} or~\ref{it:bad2}})
&\overset{\phantom{\eqref{eq:ex:F_0}}}{\leq} \sum_{\substack{F_0\subsetneq F\\ e(F_0)\geq 2}} v(B)^2 n^{2v(F)-v(F_0)-2}p^{2e(F)-e(F_0)-1}\\
&\overset{\phantom{\eqref{eq:ex:F_0}}}{\leq} \sum_{F_0} v(B)^2 C_1^{2e(F)-2} \cdot pn^2\cdot \frac{1}{p^{e(F_0)}n^{v(F_0)}} \\
&\overset{\eqref{eq:ex:F_0}}{\leq} n^{-\xi_3}
\end{align*}
for some $\xi_3>0$ and, hence, 
 $\P(h\text{ is bad})\leq n^{-\xi}$ for any $0<\xi<\min\{\xi_1,\xi_2,\xi_3\}$ and sufficiently large~$n$.
\end{proof}

\subsection{Restricting embeddings of~\texorpdfstring{$B$}{B}}\label{subsec:Xi}
In this section we focus on restricting the family $\Psi_{B,n}$ of all embeddings~$B$ in~$K_n$ to a suitable
subset~$\Xi_{B,n}$  so that we can apply Theorem~\ref{thm:ST} for the proof of Lemma~\ref{lem:hauptlemma1}.
In particular, our choice of~$\Xi_{B,n}$  will ensure conditions on the maximum degree and maximum pair degree 
of~$\cH=\cH(Z,\Xi_{B,n})$. For the control of the pair degree of~$\cH$ the following definition will be useful.

\begin{definition}\label{def:connection}
For a pair of edges $e_1,e_2\in E(Z)$ and an embedding $h\in\Xi_{B,n}\subseteq \Psi_{B,n}$ we write $e_1\approx_h e_2$ if~$e_1$ and~$e_2$ both focus on~$h(B)$.
Moreover, if~$e_1$ and~$e_2$ focus jointly on only one edge of~$h(B)$, then we write $e_1\sim_h e_2$.
We denote by $c_{\Xi_{B,n}}(e_1,e_2)$ the number of~$h\in\Xi_{B,n}$ such that~$e_1\approx_h e_2$.
\end{definition}

In the next definition and lemma we define the properties of the desired family of embeddings.

\begin{definition}\label{def:normal}
Let~$F$,~$B$ be graphs and let $\alpha>0$.
We call a family $\Xi_{B,n}\subseteq \Psi_{B,n}$ of embeddings of~$B$ into~$K_n$ $\alpha$-\textit{normal} if the following conditions are satisfied.
\begin{enumerate}[label=\Nlabel]
\item\label{it:normal1} $\vert \Xi_{B,n}\vert \geq \alpha n^2$ and
\item\label{it:normal2} $\vert V(h(B))\cap V(h^\prime(B))\vert \leq 1$ for all $h\neq h^\prime\in \Xi_{B,n}$.
\end{enumerate} 
\end{definition}

\begin{lemma}\label{lem:disEmb} Let~$F$ and~$B$ be graphs.
For all constants $\frac{1}{3}>\alpha>0$, $D>0$, $1>\zeta>0$, $\min \{\frac{1}{m_2(F)},1-\frac{1}{m_2(F)}\}>\delta>0$, and $C_1>C_0>0$ there exists $n_0\in\NN$ such that for all $n\geq n_0$ and  $C_0n^{-1/m_2(F)}\leq p\leq C_1n^{-1/m_2(F)}$ the following holds.
If $Z\in \cG_{B,F,n,p}(D,\zeta,\delta)$ and \[\P(Z\cup h(B)\rightarrow (F)_2^e)> 1-\alpha\] where $h\in \Psi_{B,n}$ chosen uniformly at random then there exists $\Xi_{B,n}^0\subseteq \Psi_{B,n}$ such that 
\begin{enumerate}[label=\Xilabel]
\item\label{it:disem1} $\Xi_{B,n}^0$ is $\talpha$-normal for $\talpha=\talpha (B)=\frac{1}{13v(B)^4v(B)!}>0$,
\item\label{it:disem2} $Z\cup h(B) \rightarrow (F)_2^e$ for all $h\in \Xi_{B,n}^0$,
\item\label{it:disem3} for all pairs $\{e_1,e_2\}\in \binom{E(Z)}{2}$ we have $c_{\Xi_{B,n}^0}(e_1,e_2)\leq \frac{1}{pn^{\delta/2}}$,
\item\label{it:disem4} $h$ is not bad w.r.t.\ $F$ and $Z$ for all $h\in\Xi_{B,n}^0$ (see Definition~\ref{def:bad}), and
\item\label{it:disem5} for all $h\in\Xi_{B,n}^0$ we have $E(h(B))\cap E(Z)=\emptyset$.
\end{enumerate}  
A family~$\Xi_{B,n}^0$ is $(\talpha,Z)$-normal if it satisfies conditions~\ref{it:disem1},~\ref{it:disem2},~\ref{it:disem3},~\ref{it:disem4}, and~\ref{it:disem5} for a given $Z\in \cG_{B,F,n,p}(D,\zeta,\delta)$.
\end{lemma}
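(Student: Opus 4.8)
The plan is to build $\Xi^0_{B,n}$ in three stages: discard the ``defective'' embeddings, greedily thin the survivors to a family whose copies of $B$ pairwise share at most one vertex, and then repair the pair-degree condition by deleting a few more embeddings.

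\emph{Stage 1: easy defects.} Let $\Psi^\star\subseteq\Psi_{B,n}$ be the set of $h$ with $Z\cup h(B)\to(F)_2^e$, with $E(h(B))\cap E(Z)=\emptyset$, and with $h$ not bad with respect to $F$ and $Z$ (Definition~\ref{def:bad}). By hypothesis at least $(1-\alpha)|\Psi_{B,n}|>\tfrac23|\Psi_{B,n}|$ embeddings have the first property; by property~\ref{it:F--B} of $Z\in\cG_{B,F,n,p}(D,\zeta,\delta)$ at most $|\Psi_{B,n}|/n^\zeta$ are bad; and, since $e(Z)\le pn^2$ with $p=\Theta(n^{-1/m_2(F)})=o(1)$, at most $e(B)\cdot e(Z)\cdot O(n^{v(B)-2})=o(n^{v(B)})=o(|\Psi_{B,n}|)$ contain an edge of $Z$. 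Hence $|\Psi^\star|\ge(\tfrac23-o(1))|\Psi_{B,n}|\ge\tfrac12|\Psi_{B,n}|$ for large $n$; every $h\in\Psi^\star$ satisfies \ref{it:disem2}, \ref{it:disem4} and \ref{it:disem5}, and $(Z,\Psi^\star)$ is regular by Fact~\ref{fact:bad imp reg}.

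\emph{Stage 2: $\talpha$-normality.} Run the greedy procedure: repeatedly pick a remaining $h$ and delete every $h'$ with $|V(h'(B))\cap V(h(B))|\ge2$. An embedding sharing two vertices with a fixed $h(B)$ is obtained by choosing those two vertices inside $V(h(B))$, choosing which vertices of $B$ map onto them, and completing freely, so at most $v(B)^4 n^{v(B)-2}$ embeddings die per step; using $|\Psi_{B,n}|\ge n^{v(B)}/(3v(B)!)$ for large $n$, the procedure runs for at least $|\Psi^\star|/(v(B)^4 n^{v(B)-2})\ge n^2/(6v(B)^4 v(B)!)$ rounds. The selected embeddings form a family $\Xi_1$ obeying \ref{it:normal2} and inheriting \ref{it:disem2}, \ref{it:disem4}, \ref{it:disem5}.

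\emph{Stage 3: the pair-degree bound.} This is the real work. Property~\ref{it:normal2} implies that any two vertices of $K_n$ lie together in $V(h(B))$ for at most one $h\in\Xi_1$, so $c_{\Xi_1}(e_1,e_2)$ is at most the number of vertex pairs $\{x,y\}$ that can occur as the $h$-image of an edge of $B$ witnessing $e_1\approx_h e_2$. Unravelling the definition of ``focus'' and using non-badness --- failure of~\ref{it:bad1} forces every witnessing copy of $F$ in $Z\cup h(B)$ to use exactly one edge of $h(B)$, and failure of~\ref{it:bad2} and~\ref{it:bad3} makes the two copies witnessing $e_1$ and $e_2$ either edge-disjoint or equal --- each such $\{x,y\}$ is the ``missing edge'' of a copy in $Z$ of a member of $\cF_{-}$ through $e_1$ (and through $e_2$), and, unless $e_1$ and $e_2$ lie on a common such copy, corresponds to a member of $\cP(Z,e_1,e_2)$. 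Call a pair \emph{heavy} if it violates the conclusion of \ref{it:F--4} or if $e_1,e_2$ lie on a common copy of a member of $\cF_{-}$ in $Z$; by \ref{it:F--2} and \ref{it:F--4} there are few heavy pairs, and for a non-heavy pair the estimate above gives $c_{\Xi_1}(e_1,e_2)\le|\cP(Z,e_1,e_2)|\le D/(pn^\delta)\le 1/(pn^{\delta/2})$ for large $n$. For each heavy pair delete the (by \ref{it:F--2}, \ref{it:F--3}, \ref{it:F--4} few) embeddings forcing its codegree above $1/(pn^{\delta/2})$; since $m_2(F)>1$, hence $\delta<1/m_2(F)<1$ by \eqref{eq:strict}, and since $e(Z)=\Theta(n^{2-1/m_2(F)})$ is subquadratic, the total number deleted is smaller than half of $|\Xi_1|$. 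The resulting family $\Xi^0_{B,n}$ then satisfies \ref{it:disem1} with $\talpha=\tfrac{1}{13v(B)^4 v(B)!}$, satisfies \ref{it:disem3} by construction, and inherits \ref{it:disem2}, \ref{it:disem4}, \ref{it:disem5}. The main obstacle is precisely Stage~3: the off-the-shelf bound on a pair-degree is only of order $1/p$, a factor $n^{\delta/2}$ too large, so one genuinely needs both the sunflower condition~\ref{it:normal2} (to reduce the count of surviving embeddings per pair to a count of vertex pairs) and the near-extremal estimate~\ref{it:F--4} on dense configurations, together with $m_2(F)>1$ to pay for removing the heavy pairs.
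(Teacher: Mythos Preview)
Your Stages~1 and~2 are fine, and the overall shape---discard defects, then thin to achieve \ref{it:normal2}, then repair pair degrees---is reasonable. The gap is in Stage~3, and it is genuine.

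Your pair-degree argument silently treats only the case $e_1\sim_h e_2$, i.e.\ both edges focus on the \emph{same} edge $b\in E(h(B))$. There you can hope to match the pair $\{x,y\}=b$ with an element of $\cP(Z,e_1,e_2)$ and invoke~\ref{it:F--4}. But $c_{\Xi_1}(e_1,e_2)$ also counts embeddings $h$ for which $e_1$ focuses on $b_1$ and $e_2$ focuses on a \emph{different} $b_2\in E(h(B))$. In that case there is no common ``missing edge'' $\{x,y\}$ completing copies of $F$ through both $e_1$ and $e_2$, so no element of $\cP(Z,e_1,e_2)$ is produced, and your inequality $c_{\Xi_1}(e_1,e_2)\le|\cP(Z,e_1,e_2)|$ fails. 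With only~\ref{it:normal2} available, the best deterministic bound for this second case is: $b_1$ determines $h$, and the number of admissible $b_1$ is at most $|\cF_-(Z,e_1)|\cdot\binom{v(F)}{2}=O(1/p)$ by~\ref{it:F--3}---exactly the ``off-the-shelf'' bound you yourself flag as a factor $n^{\delta/2}$ too weak. Nothing in your greedy selection prevents this: an adversarial run of the greedy could pick $\Theta(1/p)$ embeddings $h$ each containing one of the $\Theta(1/p)$ missing edges for $e_1$ together with some missing edge for $e_2$.

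This is precisely why the paper does \emph{not} separate normality from the pair-degree bound. It first removes the heavy pairs for~\ref{it:F--4} (handling $e_1\sim_h e_2$), and then takes a \emph{random} sample of $\Theta(n^2)$ embeddings. For the case $b_1\neq b_2$ the two edges together pin down at least three vertices of $h(B)$, so the probability that a uniformly random $h$ realises a given such configuration is $O(n^{-3})$ rather than $O(n^{-2})$; after multiplying by the $O(1/p^2)$ choices of $(b_1,b_2)$ and the $\Theta(n^2)$ samples, the expected contribution is $O(1/(p^2n))$, which beats $1/(pn^{\delta})$ because $\delta<1-1/m_2(F)$. Chernoff then gives the uniform bound, and a second-moment argument shows the same random sample also satisfies~\ref{it:normal2} after deleting a negligible fraction. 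Your greedy step throws away exactly the randomness needed to control the $b_1\neq b_2$ case.

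A secondary issue: your ``few heavy pairs'' claim is not correct for the pairs $e_1,e_2$ lying on a common member of $\cF_-(Z)$; by~\ref{it:F--2} there are $\Theta(n^2)$ such pairs, not $o(n^2)$, so the deletion budget in Stage~3 is not obviously affordable even before the Case~B problem.
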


\begin{proof}[Proof of Lemma~\ref{lem:disEmb}]
Given~$F$, $B$ and the constants as above we set 
\[\talpha=\frac{1}{13v(B)^4v(B)!}\,.\]
Let $Z\in \cG_{B,F,n,p}(D,\zeta,\delta)$ and suppose $\P(Z\cup h(B)\rightarrow (F)_2^e)>1-\alpha$.

For the construction of~$\Xi_{B,n}^0$ we start with the family~$\Psi_{B,n}$ and remove embeddings that do not satisfy property~\ref{it:disem2}, embeddings that do not satisfy property~\ref{it:disem4} and embeddings that will later lead to problems for~\ref{it:disem3}.
After that we choose at random~$2\talpha n^2$ embeddings which will induce property~\ref{it:disem3} and show that after deleting the embeddings that intersect in more than one vertex we keep $C\talpha n^2$ of them with $C>1$.
Afterwards we remove embeddings not satisfying~\ref{it:disem5}.
Since $e(Z)=\Theta(pn^2)$ we keep at least $(C\talpha-o(1)) n^2>\talpha n^2$ embeddings $h$,
which finishes the proof. 

Since $\P(Z\cup h(B)\rightarrow (F)_2^e)>1-\alpha>2/3$ there is a family $\Psi_{B,n}^1\subseteq\Psi_{B,n}$ of embeddings of~$B$ of size $\frac{2}{3}\vert \Psi_{B,n}\vert $ such that $Z\cup h(B)\rightarrow (F)_2^e$ for all $h\in\Psi_{B,n}^1$, i.e., $\Psi_{B,n}^1$ satisfies~\ref{it:disem2}.

Moreover, since $Z\in\cG_{B,F,n,p}(D,\zeta,\delta)$ there are at most $n^{-\zeta}\vert \Psi_{B,n}\vert $ embeddings that are bad~w.r.t.\ $F$ and~$Z$.
We remove those bad embeddings from $\Psi_{B,n}^1$.
In this way for sufficiently large~$n$ we obtain a family $\Psi_{B,n}^2\subseteq\Psi_{B,n}^1$ of size at least $\frac{1}{2}\vert\Psi_{B,n}\vert$ that contains no bad embedding and, therefore, $\Psi_{B,n}^2$ satisfies~\ref{it:disem4}.

Since $Z\in\cG_{B,F,n,p}(D,\zeta,\delta)$  there are at most $\frac{Dpn^2}{n^\delta}$ pairs of distinct edges $e_1,e_2\in E(Z)$ such that $|\cP(Z,e_1,e_2)|> \frac{D}{pn^\delta}$.
For those pairs of edges~$e_1,e_2$ we delete all embeddings~$h\in\Psi_{B,n}^2$ with~$e_1\sim_h e_2$.
Since $|\cF_{-}(Z,e)|\leq \frac{D}{p}$ for all $e\in E(Z)$ for $Z\in\cG_{B,F,n,p}(D,\zeta,\delta)$ we delete at most

\begin{align*}
\frac{Dpn^2}{n^\delta}\cdot \frac{D}{p}v(F)^2 n^{v(B)-2}&=\frac{D^2 v(F)^2n^{v(B)}}{n^\delta}=o(\vert \Psi_{B,n}\vert)
\end{align*}
embeddings from~$\Psi_{B,n}^2$. So we get for sufficiently large~$n$ a family~$\Psi_{B,n}^3\subseteq\Psi_{B,n}^2$ of size at least $\frac{1}{3}\vert \Psi_{B,n}\vert$ such that for all distinct $e_1$, $e_2\in E(Z)$
we have
\begin{enumerate}[label=\Flabel]
\item \label{it:connection1}  if $e_1\sim_h e_2$ for some $h\in\Psi_{B,n}^3$, 
	then $|\cP(Z,e_1,e_2)|\leq \frac{D}{pn^\delta}$.
\end{enumerate}
Next we will select a subset~$\Psi_{B,n}^4\subseteq\Psi_{B,n}^3$, which allows us to bound $c_{\Psi_{B,n}^4}(e_1,e_2)$ for every
pair of edges of $Z$.
For this purpose for 
\[
\eps=2\talpha=\frac{2}{13v(B)^4v(B)!}
\]
we select with repetition $\eps n^2$ times an element of~$\Psi_{B,n}^3$, where we assume for simplicity that~$\eps n^2$ is an integer.
For every selection~$S$ we define a family of embeddings~\mbox{$\Psi_S\subseteq\Psi_{B,n}^3$} by taking all embeddings that were chosen at least once in~$S$.
We will show that the random selection~$S$ a.a.s.\ satisfies that $c_{\Psi_S}(e_1,e_2)\leq \frac{1}{pn^{\delta/2}}$ for all $e_1,e_2\in E(Z)$ and that with probability less than~$\frac{1}{2}$ there are more than $\frac{\eps}{2}n^2$ embeddings that share at least two vertices with some other embedding in the selection.

First we show that a.a.s.\ $c_{\Psi_S}(e_1,e_2)\leq \frac{1}{pn^{\delta/2}}$ for all $e_1,e_2\in E(Z)$.
Since there are no bad embeddings w.r.t.\ $F$ and~$Z$ in~$\Psi_{B,n}^3$ we know that if~$e$ focuses on~$h(B)$ then~$e$ focuses on exactly one edge in~$E(h(B))$ (see property~\ref{it:bad1} in Definition~\ref{def:bad}).
Hence, for $e_1\approx_h e_2$ we may consider the following two cases.
Either~$e_1\sim_h e_2$ or $e_1$ and $e_2$ focus on two different edges in $h(B)$.

For the first case we shall use~\ref{it:connection1} and $\vert \Psi_{B,n}^3\vert\geq \frac{1}{3}\binom{n}{v(B)}$ to bound the probability that $e_1\sim_{h_i}e_2$. In fact, 
\begin{align*}
\P(e_1\sim_{h_i} e_2)&\leq \frac{D}{pn^{\delta}}\cdot v(F)^2\cdot \frac{v(B)^2\cdot (n-2) \cdots (n-v(B)+1)}{|\Psi^3_{B,n}|}\\
&\leq \frac{3D v(F)^2v(B)^2v(B)!}{ pn^{2+\delta}}\,.
\end{align*}

In the second case we shall use~\ref{it:F--3} of Definition~\ref{def:Z} for the upper bound on $|\cF_{-}(Z,e)|$.
This and the fact that two edges fix at least three vertices yield
\begin{align*}
\P(e_1\approx_{h_i} e_2 \,\text{    and not }\,e_1\sim_{h_i} e_2)
&\leq \frac{D^2}{p^2} \cdot v(F)^4\cdot \frac{v(B)^3\cdot (n-3)\cdots (n-v(B)+1)}{|\Psi^3_{B,n}|}\\
&\leq \frac{3D^2 v(F)^4v(B)^3v(B)!}{ p^{2}n^{3}}\,.
\end{align*}
Consequently
\begin{align}\label{al P e1e2hi}
\P(e_1\approx_{h_i} e_2)\leq 3D v(F)^2v(B)^2v(B)!\left(\frac{1}{pn^{2+\delta}}+\frac{Dv(F)^2v(B)}{ p^{2}n^{3}}\right)\,.
\end{align}
Since $\delta <1-\frac{1}{m_2(F)}$ we infer $n^{\delta}<C_0 n^{1-\frac{1}{m_2(F)}} < pn$ for sufficiently large~$n$.
Therefore the right hand side of~\eqref{al P e1e2hi} is of order~$\Theta(\frac{1}{pn^{2+\delta}}) $ and we can bound
\[
	\P(e_1\approx_{h_i} e_2)\leq \frac{D_0 }{pn^{2+\delta}}\,.
\]
where $D_0=4D v(F)^2v(B)^2v(B)!$.
For the expected number of connections we get
\[
	\E[c_{\Psi_S}(e_1,e_2)]\leq \sum_{i=1}^{\eps n^2}\P(e_1\approx_{h_i} e_2)\leq \frac{\eps D_0}{ pn^{\delta}}\,.
\]
Consequently,  Chernoff's Inequality yields
\[
	\P\left(c_{\Psi_S}(e_1,e_2)\geq\frac{3}{2}\cdot\frac{\eps D_0}{pn^{\delta}}\right)\leq \exp\left(-\frac{1}{12}\cdot	\frac{\eps D_0}{pn^{\delta}}\right)\,.
\]
Note that $\frac{1}{pn^{\delta}}> n^\beta$ for some $\beta>0$ since $\delta<\frac{1}{m_2(F)}$, hence, we can apply the union bound for all pairs of edges $e_1,e_2\in E(Z)$ and get that a.a.s.\ 
\[
	c_{\Psi_S}(e_1,e_2)\leq \frac{3\eps D_0}{2pn^{\delta}}\leq \frac{1}{pn^{\delta/2}}\,.
\]

Finally we verify that most pairs of selected embeddings intersect in at most one vertex.
In fact, for $i=1,\dots,\eps n^2$ let $1_{h_i}$ be the indicator random variable for the event ``there is some $j\in[\eps n^2]\setminus \{i\}$ such that $v(h_i(B)\cap h_j(B))\geq 2$'' and set $Y=\sum_{i=1}^{\eps n^2}1_{h_i}$.
Then
\[
	\E[1_{h_1}]\leq (\eps n^2-1)\frac{\binom{v(B)}{2}\cdot v(B)(v(B)-1)\cdot (n-2)\cdots (n-v(B)+1)}{\vert \Psi_{B,n}^3\vert}\leq D_1 \eps
\]
for some constant $D_1=D_1(B)$ with $0<D_1<\frac{3}{2} v(B)^4v(B)!$ independent of~$\eps$. Hence,
\[
	\E[Y]\leq \eps n^2D_1\eps=D_1\eps^2 n^2
\]
and by  Markov's Inequality we get 
\[
	\P(Y>2\E[Y])\leq \frac{1}{2}\,,
\]
so there is a selection~$S$ of~$\eps n^2$ embeddings such that $Y\leq 2D_1 \eps^2 n^2$ and $c_{\Psi_S}(e_1,e_2)\leq \frac{1}{pn^{\delta/2}}$ for all pairs of edges.
For this choice of $S$ we can simply delete all those embeddings $h_i$ that intersect with some other embedding $h_j$ in at least two vertices.
We call the remaining family~$\Psi_{B,n}^4$.
Using $D_1\leq 3v(B)^4v(B)!/2$ and the definition $\eps=2\talpha=\frac{2}{13v(B)^4v(B)!}$ yields 
\[
	|\Psi_{B,n}^4|\geq \eps n^2-2D_1\eps^2n^2\geq C\talpha n^2
\]
for some $C>1$ and, hence, $\Psi_{B,n}^4$ satisfies~\ref{it:disem1}--\ref{it:disem4}.

To achieve~\ref{it:disem5} we make use of $e(Z)\leq pn^2$ (see~\ref{it:F--1} of Definition~\ref{def:Z}).
Since no two embeddings from $\Psi_{B,n}^4$ share an edge, we may remove all embeddings from $\Psi_{B,n}^4$ which share at least one edge with $Z$ and this results in the desired family $\Xi_{B,n}^0\subseteq \Psi_{B,n}^4$ of size at least $\talpha n^2$,
which finishes the proof.
\end{proof}

For Lemma~\ref{lem:hauptlemma1} we have to show that there is a family of embeddings~$\Xi_{B,n}$ such that the hypergraph $\cH(Z,\Xi_{B,n})$ is index consistent with a profile~$\pi$. Lemma~\ref{lem:index} will ensure this.

\begin{lemma}\label{lem:index}
For all constants $1>\talpha>0$ and $D>0$, for all graphs~$F$ and~$B$ with~$F$ being strictly balanced and with $E(B)=\{e_1,\dots,e_K\}$, there exist $\alpha^\prime>0$ and $L\in\NN$ such that every graph~$Z$ on~$n$ vertices with a fixed ordering of its edge set and the property
\begin{enumerate}[label=\Zlabelb]
\item\label{prop:Z} $|\cF_{-}(Z)|\leq D n^2$
\end{enumerate}
satisfies the following.

For every $(\talpha,Z)$-normal family $\Xi_{B,n}^0$ there is an $(\alpha',Z)$-normal family
$\Xi_{B,n}\subset \Xi_{B,n}^0$ and there is a profile~$\pi$ of length at most~$L$ such that $(Z,\Xi_{B,n})$ is index consistent with profile~$\pi$.
\end{lemma}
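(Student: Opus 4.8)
The plan is to pass to a positive fraction of the embeddings $h\in\Xi_{B,n}^0$ for which the hyperedge $M(Z,h(B))$ has bounded size, then to pigeonhole twice so as to fix this size and the profile, and finally to use a random colouring of $E(Z)$ to enforce index consistency. For the first step, note that every $h\in\Xi_{B,n}^0$ is not bad and satisfies $E(h(B))\cap E(Z)=\emptyset$ (properties~\ref{it:disem4} and~\ref{it:disem5}); hence any copy of $F$ in $Z\cup h(B)$ witnessing that some $z\in E(Z)$ focuses on $h(B)$ contains exactly one edge $h(b)$ of $h(B)$ (as~\ref{it:bad1} fails), and deleting $h(b)$ yields a copy $\hat F\in\cF_-(Z)$ with $z\in E(\hat F)$ whose vertex set contains both endpoints of $h(b)$, so that $|V(\hat F)\cap V(h(B))|\ge 2$. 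By~\ref{it:normal2} every pair of vertices lies in $V(h(B))$ for at most one $h\in\Xi_{B,n}^0$, so each $\hat F\in\cF_-(Z)$ has $|V(\hat F)\cap V(h(B))|\ge 2$ for at most $\binom{v(F)}{2}$ embeddings. Counting incidences, using $|\cF_-(Z)|\le Dn^2$ and $|\Xi_{B,n}^0|\ge\talpha n^2$, Markov's inequality yields a subfamily $\Xi^1\subseteq\Xi_{B,n}^0$ with $|\Xi^1|\ge\tfrac12|\Xi_{B,n}^0|$ such that for every $h\in\Xi^1$ the set $M(Z,h(B))$ is covered by at most $2\binom{v(F)}{2}D/\talpha$ such copies $\hat F$, each having $e(F)-1$ edges; thus $|M(Z,h(B))|\le L:=\big\lceil 2(e(F)-1)\binom{v(F)}{2}D/\talpha\big\rceil$.

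Next I would partition $\Xi^1$ according to the value $\ell:=|M(Z,h(B))|\in\{0,\dots,L\}$ and then, inside the largest part, according to the profile of $M(Z,h(B))$ — which is a map $[\ell]\to[K]$, well defined because $(Z,\Xi_{B,n}^0)$ is regular by Fact~\ref{fact:bad imp reg} — of which there are at most $K^L$. This produces $\Xi^3$ with $|\Xi^3|\ge|\Xi^1|/\big((L+1)K^L\big)$ such that all hyperedges $M(Z,h(B))$, $h\in\Xi^3$, have the same length $\ell$ and the same profile $\pi$. If $\ell=0$ we are already done (take $\Xi_{B,n}=\Xi^3$ and $\pi$ the empty map), so assume $\ell\ge 1$.

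Finally, to force index consistency I would colour every edge of $Z$ independently and uniformly with a colour from $[\ell]$. Writing $M(Z,h(B))=\{f_1^h<\dots<f_\ell^h\}$ in the fixed ordering of $E(Z)$, call $h$ a survivor if $f_i^h$ receives colour $i$ for all $i$; since the $f_i^h$ are distinct this has probability $\ell^{-\ell}$, so some colouring has at least $\ell^{-\ell}|\Xi^3|$ survivors, and I take $\Xi_{B,n}$ to be this set. For $h\in\Xi_{B,n}$ the index of an edge $z\in M(Z,h(B))$ equals the colour of $z$, so two survivors assign $z$ the same index; hence $(Z,\Xi_{B,n})$ is index consistent with profile $\pi$ of length $\ell\le L$. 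Composing the three size losses gives $|\Xi_{B,n}|\ge\alpha' n^2$ with $\alpha':=\talpha/\big(2(L+1)K^L L^L\big)$, and since $\Xi_{B,n}\subseteq\Xi_{B,n}^0$ it inherits~\ref{it:normal2} and~\ref{it:disem2}--\ref{it:disem5}; together with its size this makes it $(\alpha',Z)$-normal. The constants $\alpha'$ and $L$ depend only on $F$, on $B$ through $K=e(B)$, and on the given constants, as required.

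I expect the only real subtlety to be the first step: one has to verify that excluding the bad embeddings of type~\ref{it:bad1} really does force each focusing edge of $Z$ to lie inside an $\cF_-$-copy meeting $V(h(B))$ in at least two vertices, so that the crude bound $|\cF_-(Z)|\le Dn^2$ together with~\ref{it:normal2} is enough to bound $|M(Z,h(B))|$ by an absolute constant. The two pigeonhole steps and the probabilistic selection are entirely routine.
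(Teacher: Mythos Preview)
Your proof is correct and follows essentially the same three-step approach as the paper: a double-counting/Markov argument using $|\cF_-(Z)|\le Dn^2$ together with~\ref{it:normal2} to bound $|M_h|$, pigeonholing on length and profile, and a random $\ell$-colouring of $E(Z)$ (the paper phrases it as a random partition $Z_1\dcup\dots\dcup Z_\ell$) to achieve index consistency. The only cosmetic difference is that the paper invokes the failure of all of~\ref{it:bad1}--\ref{it:bad3} to partition each $M_h$ into disjoint $F'$-copies, whereas you cover $M_h$ by possibly overlapping $\cF_-$-copies using only~\ref{it:bad1} and~\ref{it:disem5}; the resulting bounds and constants agree up to inessential factors.
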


Below we consider $Z$ and $B$ to be fixed graphs and for a simpler notation we set 
\[
	M_h=M(Z,h(B))
\]
for $h\in \Psi_{B,n}$ (see~\eqref{def:MZB} for the definition of $M(Z,h(B))$).
Note that it is rather unlikely that $M_h$ and $M_{h^\prime}$ of $\cH$ are equal for distinct $h,h^\prime\in \Xi^0_{B,n}$
and, hence, Lemma~\ref{lem:index} follows by a simple averaging argument. 
We will use Lemma~\ref{lem:index} for $Z\in \cG_{B,F,n,p}(D,\zeta,\delta)$ which satisfies~\ref{prop:Z} by~\ref{it:F--2} from Definition~\ref{def:Z}.

\begin{proof}[Proof of Lemma~\ref{lem:index}]
Let $1>\talpha>0$, $D>0$,~$F$ and~$B$ be given.
We define
\[L=(e(F)-1)\frac{2}{\talpha}v(F)^2D \qquad \text{and} \qquad \alpha^\prime=\frac{\talpha}{2L(KL)^L}\,.\]
Given some $Z$ satisfying~\ref{prop:Z} and an $(\talpha,Z)$-normal family $\Xi_{B,n}^0\subseteq\Psi_{B,n}$ we will restrict $\Xi_{B,n}^0$ to the promised set~$\Xi_{B,n}$ with the desired properties.

Note that the family  $\Xi_{B,n}\subseteq\Xi_{B,n}^0$ inherits the properties~\ref{it:disem2}--\ref{it:disem5} 
from the $(\talpha,Z)$-normality of $\Xi_{B,n}^0$ since they are independent of $\talpha$. 
Consequently, to establish that $\Xi_{B,n}$ is indeed $(\alpha^\prime,Z)$-normal,
we only have to focus on~\ref{it:disem1}. Since again property~\ref{it:normal2} of Definition~\ref{def:normal}
is inherited from the normality of $\Xi_{B,n}^0$,
it suffices to show that $\vert \Xi_{B,n}\vert \geq \alpha^\prime n^2$.

Because of~\ref{prop:Z} we know that $Z$ contains at most~$Dn^2$ copies of some~$F^\prime\subseteq F$ with $e(F^\prime)=e(F)-1$.
Also due to $\Xi_{B,n}^0$ being $(\talpha,Z)$-normal (see~\ref{it:disem4}) there are no bad embeddings w.r.t.\ $F$ and~$Z$ in~$\Xi_{B,n}^0$ and thus by Fact~\ref{fact:bad imp reg} the pair~$(Z,\Xi_{B,n}^0)$ is regular.
In particular, for every $h\in \Xi_{B,n}^0$ we have that
 every edge~$e\in M_h$ focuses on exactly one $b\in E(h(B))$.
Furthermore, since every $h\in \Xi_{B,n}^0$ also does not satisfy~\ref{it:bad3} of Definition~\ref{def:bad}, each $e\in M_h$ focuses on one $b\in E(h(B))$ in only one way, i.e.\ there is only one copy of~$F$ in $Z\cup h(B)$ containing~$b$ and~$e$.
Therefore,~$\l_h=|M_h|$ is a multiple of $e(F)-1$ and each~$M_h$ gives rise to $\l_h/(e(F)-1)$ copies of graphs~$F^\prime$ in~$Z$, where each such $F'$ is
obtained from~$F$ by removing some edge.
Clearly, each such $(e(F)-1)$-element subset of~$M_h$ might be completed to a copy of~$F$ in at most $\binom{v(F)}{2}-e(F)+1<v(F)^2$ ways.

 Applying the upper bound on the number of copies of~$F$ with one edge removed from~\ref{prop:Z} yields
 \[
	\sum_{h\in \Xi^0_{B,n}}\frac{\l_h}{e(F)-1}\leq v(F)^2 \cdot Dn^2\,.
\]
So there are at most $\talpha n^2/2$ embeddings $h\in \Xi_{B,n}^0$ with $\l_h> L$, and, consequently, at least $\talpha n^2/2$ embeddings~$h\in\Xi_{B,n}^0$ with~$\l_h\leq L$.
Since there are at most~$K^\l$ different profiles of length~$\l$,
there must be a profile~$\pi$ of length $\l\leq L$ and a subset $\Xi^\prime_{B,n}\subseteq \Xi_{B,n}^0$ with
\[
	\vert \Xi^\prime_{B,n}\vert\geq \frac{1}{LK^L}\cdot \frac{\talpha}{2}n^2
\]
such that $(Z,\Xi^\prime_{B,n})$ has profile~$\pi$.

Next we apply another averaging argument to achieve index consistency.
We consider some partition $Z_1\dcup\dots\dcup Z_\l$ of~$Z$ into~$\l$ classes chosen uniformly at random.
Recall that we ordered the edges of~$Z$.
For $h\in \Xi^\prime_{B,n}$ consider $M_h=(z_1,\dots,z_\l)$ with the inherited ordering of~$Z$.
We include~$h$ in~$\Xi_{B,n}$ if $z_i\in Z_i$ for all $i=1,\dots,\l$.
Clearly $\P(h\in \Xi_{B,n})=\frac{1}{\l^\l}$ and $\E[\vert \Xi_{B,n}\vert]=\frac{\vert \Xi^\prime_{B,n}\vert}{\l^\l}$, which means there is an $\Xi_{B,n}\subseteq \Xi^\prime_{B,n}$ with 
\[
\vert \Xi_{B,n}\vert \geq \vert \Xi^\prime_{B,n} \vert/\l^\l\geq \frac{1}{L^L} \frac{\talpha n^2}{2L K^L}=\alpha^\prime n^2\,.
\]
Now let $h,h^\prime\in \Xi_{B,n}$ and let $z\in M_h\cap M_{h^\prime}$.
Since $z\in Z_j$ for some partition class $Z_j$ we know that~$z$ has index~$j$ in both $M_h$ and $M_{h^\prime}$.
Therefore $(Z,\Xi_{B,n})$ is index consistent which finishes the proof.
\end{proof}

\subsection{Proof of Lemma~\ref{lem:hauptlemma1}}\label{subsec:Proofhauptlemma}

Finally we prove Lemma~\ref{lem:hauptlemma1}.
The previous lemmas will be utilised to show that the hypergraph $\cH(Z,\Xi)$ satisfies the conditions of
Theorem~\ref{thm:ST} of Saxton and Thomason about independent sets in hypergraphs.

\begin{proof}[Proof of Lemma~\ref{lem:hauptlemma1}]
Let constants $C_1>C_0>0$, $\frac{1}{3}>\alpha>0$ and graphs~$F$ and~$B$ with~$F$ being strictly balanced be given.

First we fix all constants used in the proof. For the given graphs $F$ and~$B$ and the given constants $C_1$ and~$C_0$ 
Lemma~\ref{lem:G(BFnzg)} yields constants~$D>0$, $\zeta>0$, and~$\delta$ 
with $0<\delta<\min\big\lbrace\tfrac{1}{m_2(F)}, 1-\tfrac{1}{m_2(F)}\big\rbrace$.
Similarly Lemma~\ref{lem:index} applied to~$F$,~$B$, $D$ and
\[\talpha=\frac{1}{13v(B)^4v(B)!}\]
yields $\alpha^\prime$ and~$L$.
Fixing an auxiliary constant
\[
	k=\binom{L}{e(F)-1}\binom{v(F)}{2}
\]
allows us to set
\begin{align}\label{eq:gamma beta}
\beta=\frac{\alpha^\prime}{Dkv(F)^2}\qand\gamma=\frac{\delta}{10L}\,.
\end{align}
We shall show that $\alpha'$, $\beta$, $\gamma$, and $L$ defined this way have the desired property. For that 
let $p=p(n)=c(n)n^{-1/m_2(F)}$ for some $c(n)$ satisfying $C_0\leq c(n)\leq C_1$. We shall show that 
$G(n,p)$ a.a.s.\ satisfies the property of Lemma~\ref{lem:hauptlemma1}. Hence, in view of Lemma~\ref{lem:G(BFnzg)} 
we may assume that the graphs $Z$ considered in Lemma~\ref{lem:hauptlemma1} are 
from the set $\cG_{B,F,n,p}(D,\zeta,\delta)$. Moreover, let~$n$
be sufficiently large, so that Lemma~\ref{lem:disEmb} applied 
with~$F$,~$B$,~$\alpha$,~$D$,~$\zeta$,~$\delta$,~$C_1$ and~$C_0$ holds for $n$.

Now let $Z\in \cG_{B,F,n,p}(D,\zeta,\delta)$ such that for~$h\in\Psi_{B,n}$ chosen uniformly at random 
we have \[\P(Z\cup h(B) \rightarrow (F)_2^e)>1-\alpha.\]
Then Lemma~\ref{lem:disEmb} yields an $(\talpha,Z)$-normal family 
of embeddings~$\Xi_{B,n}^0\subseteq \Psi_{B,n}$, i.e.,  the family~$\Xi_{B,n}^0$ 
satisfies properties \ref{it:disem1}--\ref{it:disem5} of Lemma~\ref{lem:disEmb} 
for the parameters chosen above.

Since $Z\in \cG_{B,F,n,p}(D,\zeta,\delta)$ it satisfies property~\ref{it:F--2} of Definition~\ref{def:Z}
and, hence,~$Z$ satisfies in particular
assumption~\ref{prop:Z} of Lemma~\ref{lem:index}. Consequently, 
Lemma~\ref{lem:index} yields an $(\alpha',Z)$-normal family $\Xi_{B,n}\subseteq \Xi_{B,n}^0$
and a profile~$\pi$ of length~$\l\leq L$ such that the pair $(Z,\Xi_{B,n})$ is index consistent for~$\pi$.

Next we consider the hypergraph $\cH=\cH(Z,\Xi_{B,n})$ defined by
\[	
	V(\cH)=E(Z)
\qand 
	E(\cH)=\lbrace M(Z,h(B))\colon h\in \Xi_{B,n}\rbrace\,,
\]
where 
\[
	M(Z,h(B))=\lbrace z\in E(Z)\colon \text{there is }b\in E(h(B))\text{ such that }z\text{ focuses on }b \rbrace\,.
\]
Clearly, $\cH$ is an $\l$-uniform hypergraph on~$m=e(Z)$ vertices.
Below we show that~$\cH$ satisfies the assumptions of Theorem~\ref{thm:ST} for
\[
	\eps=\tfrac{1}{4} \qqand \tau=n^{-\frac{\delta}{4(\l-1)}}\,.
\]
Since $Z\in\cG_{B,F,n,p}(D,\zeta,\delta)$ it displays properties~\ref{it:F--1}--\ref{it:F--B} of 
Definition~\ref{def:Z}. In particular, the property~\ref{it:F--1} guarantees
\begin{align}\label{eq:m}
\frac{1}{4}pn^2\leq e(Z)= m\leq pn^2<n^2\,.
\end{align}

Now we bound~$e(\cH)$.
Since $\Xi_{B,n}$ is $\alpha'$-normal, it follows from~\ref{it:normal1} and~\ref{it:normal2} of Definition~\ref{def:normal}  that $\alpha^\prime n^2\leq |\Xi_{B,n}|\leq n^2$ and, consequently, we have
 $e(\cH)\leq n^2$.
On the other hand, for any hyperedge~$M_h$ of size $\l$ there are at most $\binom{\l}{e(F)-1}$ different copies of some~$F^\prime\subseteq F$ with $e(F^\prime)=e(F)-1$ in~$M_h$ and each such copy can be extended to~$F$ by at most $\binom{v(F)}{2}$ different boosters since all boosters are edge disjoint.
Consequently,~$M_h$ could be the hyperedge for at most $\binom{\l}{e(F)-1}\binom{v(F)}{2}\leq k$ different embeddings~$h\in \Xi_{B,n}$
and, therefore, we have
\begin{equation}\label{eq:eH}
	\frac{\alpha^\prime n^2}{k}\leq e(\cH) \leq n^2\,.
\end{equation}
Hence, for the average degree of~$\cH$ we obtain
\[
	d(\cH)=\l\cdot\frac{ e(\cH)}{v(\cH)}\geq\l\cdot\frac{ \alpha^\prime n^2}{k}\cdot\frac{1}{pn^2}=\frac{\l\alpha^\prime}{kp}\,.
\]

We denote by $\Delta_1(\cH)=\max_{v\in V(\cH)}\vert\{e\in E(\cH):e \text{ contains }v\}\vert$ the maximum vertex degree and by
$\Delta_2(\cH)=\max_{(v,v^\prime)\in \binom{V(\cH)}{2}}\vert\{e\in E(\cH):e \text{ contains }v\text{ and }v^\prime \}\vert$ the maximum codegree of~$\cH$ and below 
we will bound~$\Delta_1(\cH)$ and~$\Delta_2(\cH)$.

We start with $\Delta_1(\cH)$. Suppose $e\in M(Z,h(B))$ for some 
$h\in\Xi_{B,n}$. Since $\Xi_{B,n}$ contains no bad embeddings w.r.t.\ $F$ and~$Z$ and $E(h(B))\cap E(Z)=\emptyset$
there exists a unique copy $F_-\in\cF_-(Z,e)$ with $e\in E(F_-)$ and $f\in h(B)$ such that $F_-+f$ forms a copy of $F$.
Moreover, since every two distinct embeddings $h$, $h'\in  \Xi_{B,n}$ intersect in at most one vertex the degree of
$e$ in $\cH$ is bounded by $|\cF_-(Z,e)|\cdot\binom{v(F)}{2}$.
Consequently, it follows from property~\ref{it:F--3} given by $Z\in\cG_{B,F,n,p}(D,\zeta,\delta)$  that
\[
	\Delta_1(\cH)\leq \frac{D}{p}\cdot\binom{v(F)}{2}\,.
\]

For~$\Delta_2(\cH)$ we have to look at pairs of edges of~$Z$.
Two edges $e_1,e_2\in E(Z)$ are both contained in $M(Z,h(B))$ if and only if~$e_1\approx_h e_2$.
By~\ref{it:disem3} we know $c_{\Xi_{B,n}}(e_1,e_2)\leq \frac{1}{pn^{\delta/2}}$, so 
\[
	\Delta_2(\cH)\leq \frac{1}{pn^{\frac{\delta}{2}}}\,.
\]
Note that~$pn^{\delta/2}\rightarrow 0$ for $n\rightarrow \infty$ since $\delta\leq\frac{1}{m_2(F)}$.

In order to verify the assumptions of Theorem~\ref{thm:ST} we estimate~$\delta(\cH,\tau)$ for $\eps$ and $\tau$
defined above. Indeed we have
\begin{align*}
\delta(\cH,\tau)
&= 2^{\binom{\l}{2}-1}\sum_{j=2}^\l 2^{-\binom{j-1}{2}}\frac{1}{\tau^{j-1}md(\cH)}\sum_{v\in V(\cH)}d^{(j)}(v)\\
&\leq 2^{\binom{\l}{2}-1}\sum_{j=2}^\l 2^{-\binom{j-1}{2}}\frac{1}{\tau^{j-1}md(\cH)}\cdot m\cdot \Delta_2(\cH)\\
&\leq 2^{\binom{\l}{2}-1}\sum_{j=2}^\l \frac{1}{\tau^{\l-1}d(\cH)}\cdot \Delta_2(\cH)\\
&\leq 2^{\binom{\l}{2}-1}\cdot\l\cdot n^{\frac{\delta}{4}}\cdot \frac{kp}{\l\alpha^\prime}\cdot \frac{1}{pn^{\frac{\delta}{2}}}\\
&= 2^{\binom{\l}{2}-1}\cdot \frac{k}{\alpha^\prime}\cdot \frac{1}{n^{\frac{\delta}{4}}}\\
&\leq \frac{\eps}{12\l!}\,,
\end{align*}
where the last inequality holds for sufficiently large~$n$.

By Theorem~\ref{thm:ST} there exist some constant $c=c(\l)$ and a family $\cJ\subset \powerset(V(\cH))$ satisfying~\ref{it:ST1},~\ref{it:ST2} and~\ref{it:ST3} from Theorem~\ref{thm:ST}. We
define 
\[\cC=\{C\subset V(\cH)\colon C=V(\cH)\setminus J \text{ for one }J\in \cJ\}\,.\]
Below we show that~$\cC$ has the desired properties~\eqref{it:lem:core1},~\eqref{it:lem:core2} and~\eqref{it:lem:core3} of Lemma~\ref{lem:hauptlemma1}.

\eqref{it:lem:core1} follows from~\ref{it:ST3} since $\vert \cC\vert=\vert \cJ\vert$ and
\[
	\log \vert \cJ\vert \leq c\tau \log(1/\tau)\log(1/\eps)m\leq m \cdot n^{-\frac{\delta}{4(\l-1)}} c\log(1/\tau)\log(1/\eps)\leq m^{1-\gamma}\,,
\]
where the last inequality follows for sufficiently large~$n$ from 
\[
	m^\gamma \overset{\eqref{eq:m}}{<}n^{2\gamma}\overset{\eqref{eq:gamma beta}}{\leq} n^{\frac{\delta}{5\l}}\,,
\]
since $c=c(\l)$ and $\log(1/\eps)$ are constants independent of $n$
and $\log(1/\tau)<\log n$.

\eqref{it:lem:core2} follows from~\ref{it:ST2}.
Assume for a contradiction that there is $C\in\cC$ with $\vert C\vert <\beta m$ and let~$J=V\setminus C\in \cJ$.
Then we count the number of hyperedges of~$\cH$.

\begin{align*}
e(\cH)&\overset{\phantom{\eqref{eq:gamma beta}}}{\leq} e(\cH[V\setminus C])+\vert C \vert \cdot \Delta_1(\cH)\\
&\overset{\phantom{\eqref{eq:gamma beta}}}{<}e(\cH[J])+\beta m \cdot \frac{D}{p}\binom{v(F)}{2}\\
&\overset{\eqref{eq:m}}{\leq} \eps e(\cH)+\beta D \binom{v(F)}{2}n^2\\
&\overset{\eqref{eq:eH}}{\leq} \eps e(\cH)+\frac{\beta D k}{\alpha^\prime}\binom{v(F)}{2}e(\cH)\\
&\overset{\phantom{\eqref{eq:gamma beta}}}{=}\left(\eps+\frac{\beta D k}{\alpha^\prime}\binom{v(F)}{2}\right)e(\cH)\\
&\overset{\eqref{eq:gamma beta}}{<} e(\cH)
\end{align*}
with a contradiction, so $\vert C\vert \geq\beta m $ for all $C\in\cC$.

\eqref{it:lem:core3} For a hitting set~$A$ of~$\cH$ consider the independent set $I=V\setminus A$.
Hence by~\ref{it:ST1} of Theorem~\ref{thm:ST} there exists $J\in\cJ$ such that $I\subseteq J$ and, therefore, $A\supseteq V\setminus J=C$ which is an element of~$\cC$.
\end{proof}

\section{Proof of Lemma~\ref{lem:base}}\label{sec:proof Lemma 2}
The proof of Lemma~\ref{lem:base} follows the proof in~\cite{FRRT06}*{Lemma~2.3} and is based on an application of the regularity method for subgraphs of sparse random graphs which we introduce first.

Let $\eps>0$, $p\in (0,1]$ and $H=(V,E)$ be a graph.
For $X$, $Y\subset V$ non-empty and disjoint let
\[d_{H,p}(X,Y)=\frac{e(X,Y)}{p\vert X\vert \vert Y\vert}\]
and we say $(X,Y)$ is $(\eps,p)$-regular if
\[\vert d_{H,p}(X,Y)-d_{H,p}(X^\prime,Y^\prime)\vert <\eps\]
for all subsets $X^\prime\subseteq X$ and $Y^\prime\subseteq Y$ with $\vert X^\prime\vert \geq \eps \vert X\vert$ and $\vert Y^\prime\vert \geq \eps \vert Y\vert$.
We will use the sparse regularity lemma in the following form (see, e.g.,~\cite{Ko97}).

\begin{lemma}\label{lem:SpRL}
For all $\eps>0$ and $t_0$ there exists an integer~$T_0$ such that for every function $p=p(n)\gg 1/n$ a.a.s.\ $G\in G(n,p)$ has the following property.
Every subgraph $H=(V,E)$ of~$G$ with $\vert V\vert =n$ vertices admits a partition $V=V_1\dcup\dots\dcup V_t$ satisfying
\begin{enumerate}[label=\rmlabel]
\item $t_0\leq t\leq T_0$,
\item $\vert V_1\vert \leq \dots \leq \vert V_t\vert \leq \vert V_1\vert +1$  and
\item all but at most $\eps t^2$ pairs $(V_i,V_j)$ with $i\neq j$ are $(\eps,p)$-regular.\qed
\end{enumerate}
\end{lemma}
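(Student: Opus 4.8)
The plan is to run Szemer\'edi's energy--increment argument verbatim, with the $p$--normalised density $d_{H,p}$ in place of the ordinary density; the only input specific to the sparse setting is that, because $p\gg 1/n$, the random graph $G(n,p)$ is a.a.s.\ \emph{upper uniform}, so that all sufficiently large pairs carry normalised density bounded by an absolute constant. Concretely, let $T_0=T_0(\eps,t_0)$ be the (iterated--exponential) bound that the argument below produces from $\eps$ and $t_0$ alone, and put $\theta=\eps/(4T_0^2)$. For fixed disjoint $X,Y\subseteq[n]$ with $\vert X\vert,\vert Y\vert\geq\theta n$, Chernoff's inequality gives $\P\big(e_{G(n,p)}(X,Y)\geq 2p\vert X\vert\vert Y\vert\big)\leq\exp(-p\vert X\vert\vert Y\vert/3)\leq\exp(-p\theta^2n^2/3)$, which is $o(4^{-n})$ since $pn\to\infty$; a union bound over all such pairs then shows that a.a.s.\ $d_{G(n,p),p}(X,Y)\leq 2$ for every disjoint $X,Y$ with $\vert X\vert,\vert Y\vert\geq\theta n$. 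Fix a graph $G=G(n,p)$ with this property. Since $d_{H,p}(X,Y)\leq d_{G,p}(X,Y)$ for every subgraph $H\subseteq G$, the bound $d_{H,p}(X,Y)\leq 2$ holds simultaneously for \emph{all} subgraphs $H$ of $G$ and all large disjoint pairs, and it remains to prove the deterministic refinement statement for such an $H$.

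For a partition $\cP=\{V_1,\dots,V_t\}$ of $V(H)$ into parts of size at least $\theta n$ define the \emph{index}
\[
	q(\cP)=\sum_{1\leq i<j\leq t}\frac{\vert V_i\vert\,\vert V_j\vert}{n^2}\,d_{H,p}(V_i,V_j)^2\,,
\]
so that $0\leq q(\cP)\leq 2$ by the density bound (using $\sum_{i<j}\vert V_i\vert\vert V_j\vert\leq n^2/2$). Two standard facts drive the iteration. First, $q$ is monotone under refinement: if $\cP'$ refines $\cP$ then $q(\cP')\geq q(\cP)$, by convexity of $x\mapsto x^2$ applied inside each pair of classes. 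Second, the defect form of the Cauchy--Schwarz inequality: if an equitable $t$--partition $\cP$ has more than $\eps t^2$ pairs $(V_i,V_j)$ that are not $(\eps,p)$--regular, then splitting every class along the witness sets of those irregular pairs produces a refinement into at most $t\,4^{t}$ classes whose index exceeds $q(\cP)$ by at least a constant $c(\eps)>0$; indeed each irregular pair has witnesses $V_i'\subseteq V_i$, $V_j'\subseteq V_j$ of relative size $\geq\eps$ with $\vert d_{H,p}(V_i',V_j')-d_{H,p}(V_i,V_j)\vert\geq\eps$, and — since all sets involved still have size $\geq\theta n$ and hence densities bounded by $2$ — the usual two--by--two estimate yields a local index gain of at least $\eps^{4}\vert V_i\vert\vert V_j\vert/n^2$, which summed over the more than $\eps t^2$ irregular pairs (each of weight $\approx t^{-2}$ by equitability) gives $c(\eps)\geq\eps^{5}/2$. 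Finally one re--partitions this refinement into classes of a common size so as to restore equitability without an exceptional class, exactly as in the dense proof (see, e.g.,~\cite{Ko97}); with the block size chosen as a small constant multiple of $n$ this multiplies the number of classes by a factor bounded in terms of the current count and changes $q$ by less than $c(\eps)/2$, so the net index gain is still at least $c(\eps)/2$.

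Granting these facts the lemma is immediate. Start from an arbitrary equitable partition of $V(H)$ into $t_0$ classes. As long as the current equitable partition has more than $\eps t^2$ irregular pairs, apply the increment step: this keeps the partition equitable, raises $q$ by at least $c(\eps)/2$, and multiplies the number of classes by a factor bounded in terms of the current count (of order $4^{t}$). Since $q\in[0,2]$ throughout, the process halts after at most $\lceil 4/c(\eps)\rceil$ steps, at which point all but at most $\eps t^2$ pairs are $(\eps,p)$--regular. The number of classes $t$ is then bounded by the iterated exponential of height $\lceil 4/c(\eps)\rceil$ applied to $t_0$; this bound depends only on $\eps$ and $t_0$, and we declare it to be $T_0$, matching the constant fixed at the outset. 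For $n$ large all classes retain size at least $n/T_0-1\geq\theta n$, so the density bound was legitimately available at every stage, and properties~(i)--(iii) hold.

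The convexity inequality, the defect Cauchy--Schwarz estimate, and the size--equalisation bookkeeping are all identical to the classical dense case, so the genuinely new point is the opening paragraph: without the a.a.s.\ upper uniformity of $G(n,p)$ the normalised densities $d_{H,p}$ could be arbitrarily large, and then neither the boundedness of $q$ nor the quantitative increment would survive — this is precisely why the statement concerns subgraphs of a typical $G(n,p)$ rather than arbitrary sparse graphs. I expect the most fiddly piece to be the size--equalisation step, i.e.\ maintaining an honestly equitable partition (classes differing in size by at most one, with no exceptional class) while keeping both the growth of the class count and the induced change in $q$ under control; it is, however, entirely routine.
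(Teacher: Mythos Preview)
The paper does not prove this lemma at all: it is stated as a known result (the sparse regularity lemma for subgraphs of random graphs), marked with a \qed, and attributed to Kohayakawa~\cite{Ko97}. So there is no ``paper's own proof'' to compare against.

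Your sketch is the standard argument and is correct in outline: a.a.s.\ upper-uniformity of $G(n,p)$ via Chernoff plus a union bound, then the Szemer\'edi energy-increment scheme with the $p$-normalised index. The order of quantifiers is handled cleanly (fix $T_0$ from the deterministic iteration, then set $\theta=\eps/(4T_0^2)$ so that all classes ever encountered stay above the uniformity scale), and the increment constant $c(\eps)\asymp\eps^5$ is the right one. One small remark: the density bound is not needed for the defect Cauchy--Schwarz step itself (that is a pure lower bound), but it \emph{is} needed exactly where you invoke it implicitly---to bound the total index by $2$, and to ensure that the re-equalisation step perturbs $q$ by at most $c(\eps)/2$. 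You correctly flag the equalisation bookkeeping (no exceptional class, parts differing by at most one) as the most delicate piece; it is genuinely a page of case-work in~\cite{Ko97}, but contains no further ideas.
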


For a partition~$\cP$ as in the last lemma we call the graph~$R=R(\cP,d,\eps)$ with vertex set $V(R)=\{V_1,\dots ,V_t\}$ and edges
\[
	\{V_i,V_j\}\in E(R)\ \Longleftrightarrow\ (V_i,V_j) \text{ is }(\eps,p)\text{-regular with}\ d_{H,p}(V_i,V_j)\geq d
\]
the reduced graph w.r.t.\ $\cP$,~$d$, and $\eps$.

The next lemma is a counting lemma for subgraphs of random graphs 
from~\cites{BMS12,CGSS14,ST12}. For the proof of Lemma~\ref{lem:base}
we only need this (and the following lemma) for fixed bipartite graphs. 
However, we state those auxiliary lemmas in its general form.
\begin{lemma}\label{lem:SpCL}
For every graph~$F$ with vertex set $V(F)=[\l]$ and $d>0$ there exist $\eps>0$ and $\xi>0$ such that for every $\eta >0$ there exists $C>0$ such that for $p>Cn^{-1/m_2(F)}$ a.a.s.\ $G\in G(n,p)$ satisfies the following.

Let $H=(V_1\dcup\dots\dcup V_\l, E_H)$ be an $\l$-partite (not necessarily induced) subgraph of~$G$ with vertex classes
of size at least $\eta n$ and with the property that for every edge $\lbrace i,j\rbrace\in E(F)$ the pair $(V_i,V_j)$ in~$H$ is $(\eps,p)$-regular with density $d_{H,p}(V_i,V_j)\geq d$.
Then the number of partite copies of~$F$ in~$H$ is at least
\[
	\xi p^{e(F)}\prod_{i=1}^\l \vert V_i\vert\,,
\]
where a partite copy is a graph homomorphism~$\vphi\colon F\rightarrow H$ with $\vphi(i)\in V_i$.\qed
\end{lemma}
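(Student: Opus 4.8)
The plan is to deduce Lemma~\ref{lem:SpCL} from the hypergraph container theorem (Theorem~\ref{thm:ST}): in the form stated it is the counting version of the Kohayakawa--\L uczak--R\"odl conjecture, established in~\cites{BMS12,ST12,CGSS14}, and I would follow that route. Fix $F$ and $d$ and consider the $e(F)$-uniform hypergraph $\cH$ of copies of $F$: its vertex set is the set of (potential) edges of the host, its hyperedges are the edge sets of the copies of $F$, and its independent sets are exactly the $F$-free subgraphs. Applying the container theorem to $\cH$ --- in the standard iterated form that handles graphs with \emph{few} copies of $F$, not only $F$-free ones --- yields a family $\cC$ of containers such that every subgraph of the host with at most $\delta\,e(\cH)$ copies of $F$ differs in at most $c\tau m$ edges from a subgraph of some $J\in\cC$, where each $J$ itself spans at most $\delta\,e(\cH)$ copies of $F$, $\tau$ is of order a small constant once $p\ge Cn^{-1/m_2(F)}$ with $C$ large, and $\log\lvert\cC\rvert$ is at most a small tunable constant times $pn^2$. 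The key point is that the verification of the degree hypotheses $\delta(\cH,\tau)\le\eps/(12\,e(F)!)$ of Theorem~\ref{thm:ST} is governed precisely by the subgraph densities of $F$, i.e.\ by the quantity $m_2(F)$, which is where the exponent in $p>Cn^{-1/m_2(F)}$ comes from.

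Granting this, the lemma reduces to a purely structural statement: a.a.s.\ in $G(n,p)$, for every tuple of pairwise disjoint $V_1,\dots,V_\l$ of size $\ge\eta n$, no ``$F$-sparse'' container $J\in\cC$ contains a subgraph of $G[V_1,\dots,V_\l]$ that is $(\eps,p)$-regular with density $\ge d$ on every pattern pair $\{i,j\}\in E(F)$. Indeed, if some admissible $H$ as in the lemma had fewer than $\xi p^{e(F)}\prod_{i=1}^{\l}\lvert V_i\rvert$ partite copies of $F$, then $H$ would have at most $\delta\,e(\cH)$ copies of $F$ for suitable $\delta$ (choosing $\xi\eta^{v(F)}$ small forces this), so deleting the negligible edge set ($\le c\tau m=o(e(H))$ edges) provided by the container theorem leaves, by the robustness of $(\eps,p)$-regularity under sparse edge removal, a graph that is still $(2\eps,p)$-regular with density $\ge d/2$ on every pattern pair and lies inside an $F$-sparse container --- contradicting the structural statement. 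The structural statement itself would be proved by a union bound over the $\le 2^{\l n}=\exp(o(pn^2))$ tuples (valid as long as $m_2(F)>1$; when $F$ is a forest the whole lemma is elementary by a greedy partite embedding) and over $\cC$, after showing that for a fixed tuple and fixed $F$-sparse $J$ the bad event has probability $\exp(-\Omega(pn^2))$, which one establishes by a sparse supersaturation/removal argument together with Janson's inequality.

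The main obstacle is exactly this last quantitative step: turning ``$J$ has only a $\delta$-fraction of the copies of $F$'' into ``$G(n,p)$ restricted to $J$ cannot be regular-and-dense on all pattern pairs'', with a failure probability small enough to survive the union bounds against the container count $\log\lvert\cC\rvert$. This balancing of three quantities --- the supersaturation gain $\delta$, the number of containers, and the random-host estimate --- is the technical heart of the Kohayakawa--\L uczak--R\"odl theorem and the reason an off-the-shelf single application of Theorem~\ref{thm:ST} does not suffice: one is forced to run the container machinery inside $G(n,p)$ itself (so that $\lvert V(\cH)\rvert=e(G(n,p))=\Theta(pn^2)$, the fingerprint size $c\tau m$ is genuinely negligible, and $\tau\log(1/\tau)$ can be made a small constant), rather than on the deterministic complete blow-up, which would give $\log\lvert\cC\rvert=\Theta(n^{2-1/m_2(F)}\log n)$ --- just barely too large --- and a fingerprint of size $\Theta(pn^2)$; but then the container family is coupled to the random host, and disentangling this via the typical local structure of $G(n,p)$ (in the spirit of properties~\ref{it:F--3} and~\ref{it:F--4} above) is precisely the delicate part carried out in the cited papers. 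Everything else --- the degree bookkeeping for $\delta(\cH,\tau)$, the robustness of $(\eps,p)$-regularity, and the Janson bound --- is routine. (Only the bipartite case is needed for Lemma~\ref{lem:base}, but the argument is the same.)
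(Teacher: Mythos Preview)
The paper does not prove this lemma at all: it is stated with a \qed\ and attributed to~\cites{BMS12,ST12,CGSS14} as a known instance of the K\L R counting lemma, and is used purely as a black box in the proof of Lemma~\ref{lem:base}. Your outline --- containers for the hypergraph of copies of~$F$, supersaturation to force a sparse pair inside each container, a large-deviation bound for $G(n,p)$ on that pair, and a union bound --- is indeed the route taken in those references, so at the level of strategy you are aligned with the literature the paper cites.

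Where your sketch goes wrong is the diagnosis in the final paragraph. In the proofs in~\cites{BMS12,ST12,CGSS14} the container theorem is applied \emph{deterministically} to the complete (or complete $\ell$-partite) host, not ``inside $G(n,p)$''; running containers on a random hypergraph would make the container family itself random and destroy the union bound, and the papers do not attempt anything like the decoupling you describe via properties~\ref{it:F--3}--\ref{it:F--4} (those are used in this paper for a completely different purpose, namely Lemma~\ref{lem:hauptlemma1}). The logarithmic overshoot you correctly notice in a naive union bound over~$\cC$ is resolved by a different, and much simpler, observation: by part~\ref{it:ST1} of Theorem~\ref{thm:ST} the fingerprint~$T$ satisfies $T\subseteq I$, and here the ``independent set'' $I$ is the edge set of the offending $F$-sparse subgraph $H\subseteq G(n,p)$, so $T\subseteq E(G(n,p))$. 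Hence the union bound is not over all containers but over all fingerprints, each weighted by $p^{|T|}$ for the event $T\subseteq G(n,p)$; with $\tau$ chosen a small constant multiple of~$p$ (which is admissible once $C$ is large, since the codegree condition $\delta(\cH,\tau)\le\eps/12\ell!$ requires only $\tau\ge c_F\,n^{-1/m_2(F)}$), the total contribution $\sum_{|T|\le c\tau m}\binom{m}{|T|}p^{|T|}$ is $\exp(o(pn^2))$ and is beaten by the Chernoff bound on the sparse pair. This is the actual ``delicate balancing'' in the cited proofs, and it needs none of the local-structure machinery you invoke.
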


The next lemma bounds the number of edges between large sets of vertices of $G(n,p)$ 
as well as the number of copies of some bipartite graphs~$F^\star$ with two vertices from a prescribed set~$W$. 

\begin{lemma}\label{lem:EigGnp}
Let~$F^\star$ be a graph with two vertices $a_1,a_2\in V(F^\star)$ with $a_1a_2\not\in E(F^\star)$.
For all $(\log n)/n \leq p=p(n)<1$
the random graph $G\in G(n,p)$ satisfies a.a.s.\ the following properties.

\begin{enumerate}[label=\Alabel]
\item\label{it:A2} For all disjoint subsets $U$, $W\subseteq V(G)$ 
with $\vert U \vert, |W| \geq n/\log \log n$ we have
\[
	p|U|^2/3< e_G(U)<p\vert U\vert^2
	\qand
	p|U||W|/2<e_G(U,W)<2p|U||W|\,.	
\]
\item\label{it:A3} For all subsets $W\subset V(G)$
there exists a set of edges $E_0\subseteq E(G)$ with $\vert E_0\vert =n\log n$
such that there are at most $2p^{e(F^\star)} n^{v(F^\star)-2}|W|^{2}$ many copies~$\vphi(F^\star)$ of~$F^\star$ in the graph $(V(G),E(G)\setminus E_0)$
with $V(\vphi(F^\star))\cap W=\{\vphi(a_1),\vphi(a_2)\}$.
\end{enumerate}
\end{lemma}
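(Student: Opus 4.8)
The plan is to prove the two assertions of Lemma~\ref{lem:EigGnp} separately. Part~\ref{it:A2} is a routine concentration estimate; part~\ref{it:A3} requires an application of the deletion method and follows the proof of~\cite{FRRT06}*{Lemma~2.3}.

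For part~\ref{it:A2}, fix disjoint sets $U,W\subseteq V(G)$ with $|U|,|W|\ge n/\log\log n$. Then $e_G(U)$ and $e_G(U,W)$ are binomially distributed with means $p\binom{|U|}{2}$ and $p|U||W|$, and since $p\ge(\log n)/n$ both means are at least of order $p\,(n/\log\log n)^2\ge n\log n/(\log\log n)^2$, which tends to infinity faster than any linear function of $n$. Chernoff's inequality therefore bounds the probability that $e_G(U)$ leaves the interval $[\,p|U|^2/3,\ p|U|^2\,]$, or that $e_G(U,W)$ leaves the interval $[\,p|U||W|/2,\ 2p|U||W|\,]$, by $\exp(-\Omega(n\log n/(\log\log n)^2))$ (here one uses that $p\binom{|U|}{2}$ lies between $p|U|^2/3$ and $p|U|^2/2$ once $|U|$ is large). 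A union bound over the at most $4^n$ choices of the pair $(U,W)$ then gives $o(1)$, and part~\ref{it:A2} follows.

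For part~\ref{it:A3}, write $v=v(F^\star)$ and $m=e(F^\star)$ and let $\mu$ denote the expected number of copies of~$F^\star$ both of whose distinguished vertices lie in $W$, so that $\mu\le p^{m}n^{v-2}|W|^2$. The point of deleting the edge set $E_0$ is that such a subgraph count need not concentrate when $p$ is near a threshold for a subgraph of~$F^\star$, and one must remove the few edges carrying the excess. I would distinguish two regimes. If $p|W|\le\log n$, then a.a.s.\ (by Chernoff, using $\Delta(G)=O(pn)$) the number of edges of $G$ meeting $W$ is at most $2pn|W|\le n\log n$, and I take $E_0$ to be all of these edges; since $a_1$ and $a_2$ are not isolated in~$F^\star$, no copy with both distinguished vertices in $W$ survives in $G-E_0$, so the bound holds trivially. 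If $p|W|>\log n$, then a.a.s.\ $|N_G(x)\cap W|=O(p|W|)$ for every vertex $x$, so the localisation of the distinguished pair in $W$ is no obstacle; what remains is the at most $n^{v-2}$-fold extension through the non-distinguished vertices of~$F^\star$, which is a bounded-size subgraph count and is where fluctuations occur. The plan there is to let $E_0$ collect the edges lying in an ``atypically dense spot'': for each fixed $F_0\subseteq F^\star$ one bounds the number of copies of $F_0$ in $G(n,p)$ that are concentrated on few vertices, shows that a.a.s.\ at most $n\log n$ edges take part in such copies (a first- or low-moment computation that uses $p\ge(\log n)/n$ and survives the union bound over the $2^n$ sets $W$), and then argues that in $G-E_0$ the surviving, ``generic'' copies of~$F^\star$ concentrate around their expectation; a second-moment (Chebyshev) estimate then gives the stated bound $2p^{m}n^{v-2}|W|^2$.

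The main obstacle is the regime $p|W|>\log n$ of part~\ref{it:A3}: one must find a notion of ``atypically dense spot'' for which its a.a.s.\ absence simultaneously follows from a moment estimate robust enough for the union bound over all $W\subseteq V(G)$, and forces the count of the remaining generic copies of~$F^\star$ down to $2p^{m}n^{v-2}|W|^2$. This is the technical heart of the lemma, and the argument follows~\cite{FRRT06}*{Lemma~2.3}, where it was carried out for $F^\star$ a path with two edges.
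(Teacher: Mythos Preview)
Your treatment of part~\ref{it:A2} is correct and matches the paper: Chernoff plus a union bound over the at most $4^n$ pairs $(U,W)$.

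For part~\ref{it:A3}, however, you are working much harder than necessary and leave the key step unproved. The paper's argument is a two-line application of the deletion lemma in the form of~\cite{JLR00}*{Lemma~2.51}: for a \emph{fixed} $W$, set $\Gamma=\binom{[n]}{2}$, let $S$ be the family of edge sets of copies $\vphi(F^\star)$ with $V(\vphi(F^\star))\cap W=\{\vphi(a_1),\vphi(a_2)\}$, and take $k=n\log n$. The lemma then guarantees, with probability at least $1-\exp\bigl(-\tfrac{n\log n}{2e(F^\star)}\bigr)$, an $E_0\subseteq E(G)$ of size $k$ such that at most $2\mu\le 2p^{e(F^\star)}n^{v(F^\star)-2}|W|^2$ copies survive. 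Since this failure probability beats $2^{-n}$, a union bound over all $W\subseteq V(G)$ finishes the proof. No case distinction on $p|W|$, no second-moment argument, no construction of $E_0$ by hand.

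Your route, by contrast, attempts to build $E_0$ explicitly from ``atypically dense spots'' and then run a Chebyshev estimate on the residual count. This is in spirit a re-derivation of the deletion lemma itself, and you leave the decisive step---identifying a workable notion of dense spot whose absence both survives the $2^n$-fold union bound and forces the $2\mu$ upper bound---as a pointer to~\cite{FRRT06}. That is a genuine gap: without the JLR lemma (or an equivalent black box), you have not established part~\ref{it:A3}. A secondary issue: your first regime relies on $a_1$ and $a_2$ being non-isolated in $F^\star$, which is not assumed in the statement (though it holds in the paper's application).
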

The proof of~\ref{it:A2} follows directly from  Chernoff's inequality and 
the proof of~\ref{it:A3} is based on the so-called \emph{deletion method} in form of the following lemma.
\begin{lemma}\label{lem:JLRRandomgraphsL2.51}~\cite{JLR00}*{Lemma~2.51}
Let~$\Gamma$ be a set, $S\subseteq [\Gamma]^s$ and $0<p<1$.
Then for every~$k>0$ with probability at least $1-\exp(-\frac{k}{2s})$
there exists a set $E_0\subset \Gamma_p$ of size~$k$ such that $\Gamma_p\setminus E_0$ contains at most $2\mu$ sets from~$S$ where
$\mu$ is the expected number of sets from~$S$ contained in~$\Gamma_p$. \qed
\end{lemma}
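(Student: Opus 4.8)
The line of argument is a standard instance of the \emph{deletion method}, as in \cite{JLR00}*{Lemma~2.51}; I would run it as follows. Write $X$ for the number of members of $S$ contained in $\Gamma_p$, so that $\E[X]=\sum_{A\in S}p^{|A|}=\mu$, and let $\cA=\cA(\Gamma_p)=\{A\in S\colon A\subseteq\Gamma_p\}$ be the $s$-uniform hypergraph they form. If $X\le 2\mu$ one takes $E_0$ to be any $k$-element subset of $\Gamma_p$ (or, weakening ``size $k$'' to ``size at most $k$'', one takes $E_0=\emptyset$), so we may assume $X>2\mu$. In that case delete elements of $\Gamma_p$ greedily, each step removing an element lying in as many still-present members of $\cA$ as possible, and stop the first time at most $2\mu$ members survive; let $E_0$ be the set of deleted elements. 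Then $\Gamma_p\setminus E_0$ contains at most $2\mu$ members of $S$ by construction, so the whole statement reduces to the bound
\[
	\P\bigl(\text{the greedy procedure deletes more than }k\text{ elements}\bigr)\le\exp\!\bigl(-k/(2s)\bigr)\,.
\]

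The key point is that a run of more than $k$ deletions forces $\Gamma_p$ to contain an unusually rich sub-configuration of $S$. After each deletion more than $2\mu$ members of $\cA$ remain, and whenever the vertex deleted at a step has small degree in the current hypergraph, that hypergraph --- being $s$-uniform, with more than $2\mu$ edges and small maximum degree --- contains a large \emph{matching} of $S$: many pairwise disjoint members, all lying in $\Gamma_p$. Tracking the degrees deleted along the run also yields a lower bound on $X$. Hence the event above lies in the union of a few events of the shape ``$\Gamma_p$ contains $j$ pairwise disjoint members of $S$'' with $j$ of order $k/s$, together with events ``$X$ exceeds $2\mu$ by a definite amount''. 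These are all controlled by first moments: the expected number of $j$-element matchings of $S$ inside $\Gamma_p$ is at most $\binom{|S|}{j}p^{sj}\le\mu^{j}/j!\le(e\mu/j)^{j}$, and $X-2\mu$ is handled by a Chernoff bound on a suitable dominating binomial; combining these turns the factorial, via $j$ being of order $k/s$, into the target bound $\exp(-k/(2s))$.

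The main obstacle is purely quantitative: one has to bookkeep carefully how the slack $2\mu$ is spent, take account of the fact that a single deletion may destroy many members of $S$ at once --- so that ``$k$ deletions'' is genuinely more than ``$k$ members hit'' --- and patch the regime $k\gtrsim s\mu$, where the matching estimate by itself suffices, to the regime $k\lesssim s\mu$, where one relies on concentration of the count (and of the matching number, a $1$-Lipschitz, $s$-certifiable function of the independent indicators $(\mathds{1}[\gamma\in\Gamma_p])_{\gamma\in\Gamma}$, to which a Talagrand-type inequality applies). As the lemma is used here only as a black box, in practice I would simply cite \cite{JLR00}*{Lemma~2.51}.
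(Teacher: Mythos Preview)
The paper does not prove this lemma at all: it is stated with a trailing \qed and attributed to \cite{JLR00}*{Lemma~2.51}, and is used purely as a black box in the proof of Lemma~\ref{lem:EigGnp}. Your closing sentence---that in practice you would simply cite \cite{JLR00}*{Lemma~2.51}---is therefore exactly what the paper does, and nothing more is required.

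Your sketch goes well beyond the paper, and the core idea (if no $k$-element deletion set suffices, greedily extract a matching of more than $k/s$ pairwise disjoint members of $S$ inside $\Gamma_p$, then bound the probability via the first moment $\mu^{j}/j!$) is indeed the heart of the standard argument. Two small caveats on the sketch itself: the phrase ``Chernoff bound on a suitable dominating binomial'' for the upper tail of $X$ is not quite right, since the indicators $\mathds 1[A\subseteq\Gamma_p]$ for $A\in S$ are positively correlated rather than independent, so there is no obvious binomial domination; and the two-regime patching you describe is more elaborate than what the reference actually does. But as you yourself say, none of this matters here, and citing the lemma is both sufficient and faithful to the paper.
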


\begin{proof}[Proof of Lemma~\ref{lem:EigGnp}]
Since part~\ref{it:A2} follows from Chernoff's inequality, we will only focus on
property~\ref{it:A3}, which is a direct consequence of Lemma~\ref{lem:JLRRandomgraphsL2.51}.

In fact, let $V$ be a set of $n$ vertices,~$W\subset V$ and a graph~$F^\star$ with
two fixed vertices $a_1,a_2\in V(F^\star)$ not forming an edge in $F^\star$.
We use Lemma~\ref{lem:JLRRandomgraphsL2.51} with $\Gamma=\binom{V}{2}$, $s=e(F^\star)$,
\[
	S=\big\{\text{copies } \vphi(F^\star)\text{ of }F^\star\text{ in }(V,\Gamma)\text{ with }V(\vphi(F^\star))\cap W=\{\vphi(a_1),\vphi(a_2)\}\big\}\,,
\]
$p$, and $k=n\log n$. In particular, $\Gamma_p=G(n,p)$ in our setup here. 
With probability at least $1-\exp\big(-\frac{n\log n}{2e(F^\star)}\big)$ there exists a set $E_0\subseteq E(G(n,p))$ of size at most $n\log n$ such that there are at most
\[2\mu\leq 2p^{e(F^\star)} n^{v(F^\star)-2}|W|^2\]
many copies~$\vphi(F^\star)$ with $V(\vphi(F^\star))\cap W=\{\vphi(a_1),\vphi(a_2)\}$ in $(V,E(G(n,p))\setminus E_0)$.
The lemma then follows from the union bound applied for all $2^n$ possible choices $W\subset V$.
\end{proof}

Finally, we can prove Lemma~\ref{lem:base}.
Let $F$ be a strictly balanced and nearly bipartite graph. Let~$G$ be a typical graph  (with respect to the properties of Lemmas~\ref{lem:SpRL}--\ref{lem:EigGnp}) in~$G(n,p)$ and let~$H$ be a subgraph of~$G$ with $\vert E(H)\vert\geq \lambda \vert E(G)\vert$.
First we apply the sparse regularity lemma (Lemma~\ref{lem:SpRL}) to~$H$.
Since~$H$ is relatively dense in $G(n,p)$ we infer that the corresponding reduced graph~$R$ (for suitable chosen
parameters) has many, i.e.\ $\Omega(|V(R)|^2)$ edges.
So we can find many large complete bipartite graphs in~$R$.
We conclude that there is some partition class $V_i\in V(R)$ contained in many complete bipartite graphs.

We analyse the graph~$G_0=\Base_H(F)[V_i]$ on the vertex set~$V_i$ with edges being those pairs in $\binom{V_i}{2}$ that 
complete a copy of the bipartite graph $F'\subseteq F'+e=F$ in $H$ to a copy of $F$.
We say that~$G_0$ is $(\rho,d)$-dense if for all $W\subseteq V(G_0)$ with $\vert W\vert\geq \rho |V_i|$ we have 
$e_{G_0}(W)\geq d\binom{\vert W\vert}{2}$.
It is well known that sufficiently large $(\rho,d)$-dense graphs contain any fixed subgraph (see e.g.\ \cite{RR95}). 
\begin{lemma}\label{lem:rddense}
For all $d>0$ and~$F$ there exist~$\rho$, $c_0>0$ and $n_0\in\NN$ such that every $(\rho,d)$-dense graph~$G_0$ with $v(G_0)=n\geq n_0$ contains at least $c_0 n^{v(F)}$ copies of~$F$.\qed
\end{lemma}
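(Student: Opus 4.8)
The plan is to reduce the statement to a standard supersaturation argument for $(\rho,d)$-dense graphs. First I would fix $d>0$ and the target graph $F$ with $v(F)=v$ vertices. The key observation is that a $(\rho,d)$-dense graph $G_0$ on $n$ vertices, with $n$ large, not only contains a single copy of $F$ but in fact spans a positive fraction $c_0 n^{v}$ of them; this is what we need. The cleanest route is induction on $v(F)$, embedding the vertices of $F$ one at a time while keeping control of a large ``candidate set'' of admissible images for the remaining vertices.

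The key steps, in order, are as follows. I would order $V(F)=\{w_1,\dots,w_v\}$ arbitrarily and show by induction on $j$ that there are at least $c_j n^{j}$ partial injections $x_1,\dots,x_j$ into $V(G_0)$ realising the induced subgraph $F[w_1,\dots,w_j]$ such that the common neighbourhood in $G_0$ of those $x_i$ with $\{w_i,w_{j+1}\}\in E(F)$ still has size at least $\rho_j n$, for a suitable decreasing sequence of thresholds $\rho_0>\rho_1>\dots>\rho_v$ and constants $c_j$ depending only on $d$, $\rho_j$, and $F$. For the base case $j=0$ there is nothing to prove. For the inductive step: given such a partial embedding, let $W$ be the relevant common neighbourhood, $|W|\geq\rho_j n$; by $(\rho,d)$-density (with $\rho\leq\rho_{v-1}$ chosen small enough at the start) the graph $G_0[W]$ has at least $d\binom{|W|}{2}$ edges, hence by a Kővári–Sós–Turán / simple averaging argument contains many vertices $x_{j+1}\in W$ whose neighbourhood inside $W$ is still a constant fraction of $|W|$; extending by any such $x_{j+1}$ keeps a common-neighbourhood of size at least $\rho_{j+1}n$. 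Counting the number of valid choices at each step and multiplying through the $v$ stages yields $c_0 n^{v}$ completed copies of $F$. Finally I would record that $\rho$, $c_0$, and $n_0$ obtained this way depend only on $d$ and $F$, as required; the reference to~\cite{RR95} can be cited in place of spelling out this routine induction.

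The main obstacle — though a mild one — is bookkeeping: one must choose the thresholds $\rho_j$ and the constants $c_j$ so that at every step enough of the current candidate set survives, i.e.\ that shrinking $W$ by a bounded factor $v$ times still leaves a set of linear size, and that the ``bad'' vertices of small degree in $G_0[W]$ (which could number up to $(1-d/2)|W|$ or so, but no more) can be discarded while retaining a positive-density remainder. Since $F$ is fixed, all of these are finitely many constraints and can be satisfied by taking $\rho$ small and $n_0$ large depending only on $d$ and $F$; there is no difficulty of substance, which is why the lemma is quoted rather than proved in detail in the body of the paper.
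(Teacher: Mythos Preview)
The paper does not give its own proof of this lemma; it is stated with a \qed and a pointer to~\cite{RR95}, so there is nothing to compare against directly. Your outline is the right kind of argument, but the inductive step as written has a gap.

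Your induction hypothesis records only that the candidate set for the \emph{next} vertex $w_{j+1}$, namely $W=\bigcap_{i\le j,\ \{w_i,w_{j+1}\}\in E(F)} N(x_i)$, is large. You then choose $x_{j+1}\in W$ with many neighbours inside $W$ and assert that ``a common-neighbourhood of size at least $\rho_{j+1}n$'' survives. But the candidate set for $w_{j+2}$ is $\bigcap_{i\le j+1,\ \{w_i,w_{j+2}\}\in E(F)} N(x_i)$, which in general involves a \emph{different} collection of previously placed $x_i$'s and is not controlled by $N(x_{j+1})\cap W$. For a concrete failure take $F=C_4$ with cyclic order $w_1w_2w_3w_4$: after placing $x_1,x_2$ the candidate set for $w_3$ is $N(x_2)$, and you pick $x_3$ with large $|N(x_3)\cap N(x_2)|$; however the candidate set for $w_4$ is $N(x_1)\cap N(x_3)$, and nothing you have recorded bounds this from below. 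Strengthening the hypothesis to track \emph{all} future candidate sets runs into a second problem: you would then need $x_{j+1}$ to have large degree into several sets simultaneously, and $(\rho,d)$-density by itself does not yield a bipartite statement of the form ``most vertices of one large set have many neighbours in another''.

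The cleanest repair is to embed $K_{v(F)}$ rather than $F$. Then there is a single nested chain $S_j=\bigcap_{i\le j}N(x_i)$ to track; whenever $|S_j|\ge\rho n$ the $(\rho,d)$-density gives $e(S_j)\ge d\binom{|S_j|}{2}$, so a positive fraction of $S_j$ has at least $(d/4)|S_j|$ neighbours inside $S_j$, and the induction goes through with $\rho\le(d/4)^{v(F)-1}$. This produces $c_0 n^{v(F)}$ copies of $K_{v(F)}$, hence of $F$. With this small change your plan is correct and is the standard argument alluded to in~\cite{RR95}.
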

To show the $(\rho,d)$-denseness of~$G_0$ we consider $W\subseteq V_i$ with $\vert W\vert\geq \rho\vert V_i\vert$.
Then by Lemma~\ref{lem:SpCL} we will find many copies of~$F'$ in~$H$ where the missing edge has to be in~$\binom{W}{2}$.
Together with an upper bound for the number of graphs that are combinations of two different copies of~$F'$ (\ref{it:A3} of Lemma~\ref{lem:EigGnp}) we ensure that not too many copies of~$F'$ are completed to $F$ by the same pair in~$W$.
Thus there are many edges in $\Base_H(F)[W]$ and~$G_0$ is $(\rho,d)$-dense.

\begin{proof}[Proof of Lemma~\ref{lem:base}]
Let~$\lambda>0, C_1>C_0>0$ and let~$F$ be a strictly balanced nearly bipartite graph such that $F=F^\prime+\lbrace a_1,a_2\rbrace$, where~$F^\prime$ is bipartite with partition classes $A=\lbrace a_1,\dots,a_a\rbrace$ and $B=\lbrace b_1,\dots,b_b\rbrace$.

The Sparse Counting Lemma (Lemma~\ref{lem:SpCL}) applied with~$F^\prime$ and $d_{\textrm{CL}}=\lambda/4$ yields constants $\eps_{\textrm{CL}}>0$ and $\xi_{\textrm{CL}}>0$.
Since we don't know whether the given constant $C_0$ is at least $1$ or not, we find it convenient to
fix an auxiliary constant
\begin{equation}\label{eq:C0'}
C'_0=\min\{1,C_0^{e(F)-1}\}\,.
\end{equation}
Furthermore, we set
\begin{equation}
d=\frac{\left(\lambda/6\right)^{2(a-1)b}\cdot \xi_{\textrm{CL}}^2\cdot C_0^{2(e(F)-1)}\cdot C'_0}{64\cdot a^{2a}b^{2b}\cdot(v(F)+1)^{v(F)}\cdot C_1^{2(e(F)-1)}}\,.\label{eq:d}
\end{equation}
Next we appeal to Lemma~\ref{lem:rddense}. For $F$ and for this choice of~$d$ this lemma yields constants $\rho$, 
$c_0>0$ and $n_0\in\NN$. Furthermore, set
\begin{equation}
\eps=\min\left\lbrace\frac{\rho\eps_{\textrm{CL}}}{4},\frac{\lambda}{48}\right\rbrace \qqand t_0= \frac{48}{\lambda}ab\,. \label{eq:eps und t}
\end{equation}
Lemma~\ref{lem:SpRL} applied with~$\eps$ and~$t_0$ yields $T_0\in \NN$ and  Lemma~\ref{lem:SpCL}
applied with  $\eta_{\textrm{CL}}=\rho/(2T_0)$ yields~$C_{\textrm{CL}}$.
Finally, we fix the promised 
\[
	\eta=c_0T_0^{-v(F)}
\]
and let $C_0 n^{-1/m_2(F)}\leq p=p(n) \leq C_1n^{-1/m_2(F)}$.
For later reference we note that due to the balancedness of~$F$ we have
\begin{equation}\label{eq:expF}
	p^{e(F)}n^{v(F)}\leq C_1^{e(F)-1}pn^2
\end{equation}
and owing to the 
choice of $C_0'$ in~\eqref{eq:C0'} we have 
\begin{equation}\label{eq:minexp}
	p^{e(F_1)}n^{v(F_1)}\geq C'_0pn^2
\end{equation}
for every subgraph $F_1\subseteq F$ with $e(F_1)\geq 1$. Moreover, since we applied 
Lemma~\ref{lem:SpCL} for $F'\subsetneq F$, the strict balancedness 
of~$F$ implies $m_2(F)>m_2(F')$. Consequently,
for sufficiently large $n$ we have
\[
	C_\textrm{CL}n^{-1/m_2(F')}\leq C_0 n^{-1/m_2(F)}\leq p\,.
\]

Since we have to show that $G(n,p)$ a.a.s.\ satisfies $T(\lambda, \eta,F)$ we can assume that~$n$ is arbitrarily large.
Consider any $G\in G(n,p)$ that satisfies the properties of Lemma~\ref{lem:SpRL} and Lemma~\ref{lem:SpCL}, as well as property~\ref{it:A2} and property~\ref{it:A3} of Lemma~\ref{lem:EigGnp} for all bipartite graphs~$F^\star$ such that~$F^\star$ is the union of two different copies~$\vphi_1(F')$ and $\vphi_2(F')$ of~$F'$ with $\{\vphi_1(a_1),\vphi_1(a_2)\}=\{\vphi_2(a_1),\vphi_2(a_2)\}$. 
In other words, for the rest of the proof we consider a fixed graph $G$ to which we can apply the
Lemmas~\ref{lem:SpRL}--\ref{lem:EigGnp} and we will show that such a $G$ satisfies~$T(\lambda, \eta,F)$.
For that let $H\subseteq G$ with
\[
	e(H)\geq \lambda e(G)>\frac{1}{3}\lambda pn^2
\]
where the second inequality follows from property~\ref{it:A2}  of Lemma~\ref{lem:EigGnp}.

Lemma~\ref{lem:SpRL} applied to~$H$ yields a partition~$\cP$ of the vertices $V=V_1\dcup\dots\dcup V_t$ with at least $(1-\eps)\binom{t}{2}$ many $(\eps,p)$-regular pairs for some $t$ with $t_0\leq t\leq T_0$.
We assume w.l.o.g.\ that~$t$ divides~$n$.
We infer that there are at least $\frac{\lambda}{6}\binom{t}{2}$ regular pairs with edge density at least $\frac{\lambda}{4}p$ since otherwise we could bound the number of edges of~$H$ by
\begin{align*}
e(H)&\overset{\phantom{\eqref{eq:eps und t}}}{\leq} \frac{\lambda}{6}\binom{t}{2}\cdot 2p\left(\frac{n}{t}\right)^2 + \binom{t}{2} \cdot \frac{\lambda}{4} p \left(\frac{n}{t}\right)^2 + \eps\binom{t}{2}\cdot 2p\left(\frac{n}{t}\right)^2 + t\cdot p\left(\frac{n}{t}\right)^2\\
&\overset{\phantom{\eqref{eq:eps und t}}}{\leq}\frac{1}{2}pn^2\left( \frac{\lambda}{3}+\frac{\lambda}{4}+2\eps+\frac{2}{t}\right)\\
&\overset{\eqref{eq:eps und t}}{\leq} \frac{1}{3}\lambda pn^2\,,
\end{align*}
which would contradict the derived lower bound $e(H)> \frac{1}{3}\lambda pn^2$.

Let~$R=R(\cP,d_{\textrm{CL}},\eps)$ be the reduced graph w.r.t.\ the partition $\cP$ and relative 
density $d_{\textrm{CL}}=\frac{\lambda}{4}$. In particular~$R$ has exactly~$t\geq t_0$ vertices and 
at least $\frac{\lambda}{6}\binom{t}{2}$ edges. 
It follows from the theorem of K{\H{o}}v{\'a}ri, S{\'o}s and Tur{\'a}n~\cite{KST54} (see, e.g.,~\cite{ES84}*{Lemma~1}) that there are at least $\gamma t^{a+b-1}$ copies of the complete bipartite graph~$K_{a-1,b}$ in~$R$ where\footnote{Strictly speaking, 
in~\cite{KST54} no such lower bound on the number of copies of 
complete graphs in dense large graphs is given. However, the proof from~\cite{KST54} combined with standard convexity arguments gives the bound stated here
and such an argument can be found for example in~\cite{ES84}*{Lemma~1}.}
\begin{equation}\label{eq:gamma}
\gamma=\gamma(F,\lambda)=\frac{1}{2}\frac{1}{(a-1)^{a-1}b^b}\left(\frac{\lambda}{6}\right)^{(a-1)b}\,.
\end{equation}
Hence, there is a partition class $V_{a_0}$ of $\cP$ such that~$V_{a_0}$ is contained in at least $\gamma t^{a+b-2}$ copies of~$K_{a-1,b}$ in~$R$ where $V_{a_0}$ is always contained in partition class $A$ of $K_{a-1,b}$ for these copies. 

Our goal is to show that the graph~$G_0$ induced by $\Base_F(H)$ on~$V_{a_0}$ is $(\rho,d)$-dense, which due to our choice of $c_0$ and $\eta$ above leads to $c_0(n/t)^{v(F)}>\eta n^{v(F)}$ copies of~$F$ in~$G_0$ (see Lemma~\ref{lem:rddense}).
So let $W\subseteq V_{a_0}$ with $|W|\geq \rho |V_{a_0}|$ and fix some partition  
$W=W_1\dcup W_2$ with $|W_1|=|W_2|=|W|/2$ (for simplicity, we may assume that $|W|$ is even). 
Note that for any $j$ for which  $(V_{a_0},V_j)$ is 
$(\eps,p)$-regular we still have that $(W_1,V_j)$ and $(W_2,V_j)$ are $(2\eps/\rho,p)$-regular.

We will ensure many copies of~$F^\prime$ with $a_1\in W_1$ and $a_2\in W_2$ which force edges in $G_0=\Base_F(H)[V_{a_0}]$.
However, we have to make sure that not too many copies force the same edge in~$G_0$.
For this purpose we delete some edges by~\ref{it:A3} of Lemma~\ref{lem:EigGnp} to restrict the number of graphs $F^\star$ 
that are unions of two different copies of~$F^\prime$ that force the same edge in~$G_0$.

Let $\vphi_1(F^\prime)$ and $\vphi_2(F^\prime)$ be two copies of $F^\prime$ satisfying $\vphi_1(\{a_1,a_2\})=\vphi_2(\{a_1,a_2\})$ and let~$F^\star=\vphi_1(F^\prime)\cup \vphi_2(F^\prime)$.
We find by~\ref{it:A3} of Lemma~\ref{lem:EigGnp} at most $n\log n$ edges $E_{F^\star}$ such that there are at most
\begin{equation}
2p^{e(F^\star)}n^{v(F^\star)-2}|W|^2\label{eq:number two Fe}
\end{equation}
copies of~$F^\star$ in $(V(H),E(H)\setminus E_{F^\star})$ with $\vphi_1(a_1),\vphi_1(a_2)\in W_1\cup W_2$.
We repeat this argument for all possible graphs $F^\star$ that can be created this way and we denote by $\cF^{\star}$ the family of those graphs.
Since there are at most $2(a+1)^{a-2}(b+1)^b$ such graphs $F^{\star}$, in total we delete at most
\[
	2(a+1)^{a-2}(b+1)^bn\log n=o(pn^2)
\]
edges of~$H$, i.e., for $H'=H-\bigcup_{F^\star\in\cF^{\star}}E_{F^\star}$ we have
\[
	e(H') \geq (1-o(1))e(H).
\]
In particular,  for sufficiently large~$n$ the density and the regularity of the pairs in the partition~$\cP$ is not affected much and $(\delta,p)$-regular pairs in $H$ are still $(2\delta,p)$-regular in $H'$.

Lemma~\ref{lem:SpCL} yields many copies of~$F^\prime$ in~$H^\prime$.
In fact, since $m_2(F^\prime)< m_2(F)$ we get
\[
	p\geq C_0 n^{-\frac{1}{m_2(F)}}> C_{\textrm{CL}}n^{-\frac{1}{m_2(F^\prime)}}\,.
\]
For any copy of $K_{a-1,b}$ in the reduced graph~$R$ that contains~$V_{a_0}$ among the $a-1$ classes of the bipartition of $K_{a-1,b}$ Lemma~\ref{lem:SpCL} applied with $\eps_\textrm{CL}\geq 4\eps/\rho$ 
(see~\eqref{eq:eps und t}) yields at least
\[
	\xi_{\textrm{CL}}p^{e(F)-1} \left(\frac{n}{t}\right)^{v(F)-2}\vert W_1\vert \vert W_2\vert
	=
	\frac{1}{4}\xi_{\textrm{CL}}p^{e(F)-1}\left(\frac{n}{t}\right)^{v(F)-2}|W|^2
\]
partite copies of~$F^\prime$ in~$H^\prime$ with $a_1\in W_1$ and $a_2\in W_2$.
Repeating this for the $\gamma t^{a+b-2}$ different copies of $K_{a-1,b}$ in~$R$ that contain $V_{a_0}$ in the described way,
in total we obtain at least
\begin{align}
\gamma t^{v(F)-2}\cdot \frac{1}{4}\xi_{\textrm{CL}}p^{e(F)-1}\left(\frac{n}{t}\right)^{v(F)-2}|W|^2
&=\frac{\gamma \xi_{\textrm{CL}}}{4}\cdot p^{e(F)-1}n^{v(F)-2}|W|^2 \nonumber\\
&\geq 
\frac{\gamma \xi_{\textrm{CL}}}{4}\cdot C_0^{e(F)-1}|W|^2
\label{eq:number Fe}
\end{align}
copies of~$F^\prime$ in~$H^\prime$ with $a_1\in W_1$ and $a_2\in W_2$.
For a pair of vertices $e\in \binom{W}{2}$ we define
\[
	x_e=\left\vert\left\lbrace\vphi(F^\prime) \text{ copy of } F^\prime \text{ in } H^\prime \colon e=\{\vphi(a_1),\vphi(a_2)\} \right\rbrace\right\vert\,.
\]
By~\eqref{eq:number Fe} we know that
\begin{equation}\label{eq:sum xe}
\sum_{e\in \binom{W}{2}} x_e
\geq
\frac{\gamma \xi_{\textrm{CL}}}{4}\cdot C_0^{e(F)-1}|W|^2\,.
\end{equation}

Let $\cW_{>0}=\left\lbrace e\in \binom{W}{2}:x_e\neq 0\right\rbrace$ and $N=\vert \cW_{>0}\vert$.
Since this~$N$ corresponds to the number of edges in~$\Base_{H^\prime}(F)[W]\subseteq \Base_{H}(F)[W]$ we shall show that~$N\geq d\binom{\vert W\vert}{2}$.
For this purpose we use~\eqref{eq:sum xe} and an upper bound for $\sum_{e\in\binom{W}{2}} x_e^2$ that follows from~\eqref{eq:number two Fe}. In fact, 

\begin{equation}
\sum_{e\in\binom{W}{2}}x_e^2 
\overset{\eqref{eq:number two Fe}}{\leq}  \vert\cF^{\star}\vert \cdot 2p^{e(\hat{F})}n^{v(\hat{F})-2}|W|^2\label{eq:sum xe^2 zwischenschritt}
\end{equation}
where~$\hat{F}$ is a graph in~$\cF^{\star}$ that maximises the value of~$p^{e(F^{\star})}n^{v(F^{\star})-2}$ for $F^{\star}\in\cF^{\star}$.
We will show that $p^{e(\hat{F})}n^{v(\hat{F})-2}$ is bounded by a constant only depending on $C_0$, $C_1$ and~$F$.
In fact, for~$F^\star=\vphi_1(F')\cup \vphi_2(F')\in \cF^\star$ let~$F_0=\vphi_1(F^\prime)\cap\vphi_2(F^\prime)$ and $e=\{\vphi_1(a_1),\vphi_1(a_2)\}$.
In particular, $F_0+e\subseteq F$ and we have

\begin{equation*}
	p^{e(F^\star)}n^{v(F^\star)-2}
	=
	\frac{p^{e(F^\star+e)}n^{v(F^\star+e)}}{pn^2}
	=
	\frac{\big(p^{e(F)}n^{v(F)}\big)^2}{p^{e(F_0+e)}n^{v(F_0+e)}\cdot pn^2}
	\overset{\eqref{eq:expF}}{\leq}
	\frac{C_1^{2e(F)-2}pn^2}{p^{e(F_0+e)}n^{v(F_0+e)}}
	\overset{\eqref{eq:minexp}}{\leq}\frac{C_1^{2e(F)-2}}{C'_0}\,.
\end{equation*}
Combining~\eqref{eq:sum xe^2 zwischenschritt} with the simple upper bound  $\vert\cF^\star\vert\leq (v(F)+1)^{v(F)}$
and the last inequality yields
\begin{equation}
\sum_{e\in\binom{W}{2}}x_e^2 \leq 2(v(F)+1)^{v(F)}\frac{C_1^{2(e(F)-1)}}{C'_0}|W|^2\,.\label{eq:sum xe^2}
\end{equation}

Finally, we establish the $(\rho,d)$-denseness of~$G_0$. In fact,
from the Cauchy-Schwarz inequality we know
\[
	\Bigg(\sum_{e\in\binom{W}{2}}x_e\Bigg)^2=\left(\sum_{e\in\cW_{>0}}x_e\right)^2
	\leq
	N\cdot\sum_{e\in \cW_{>0}}x_e^2
	=
	N\cdot\sum_{e\in \binom{W}{2}}x_e^2\,.
\]
and, consequently,
\begin{align*}
N
&\overset{\phantom{\eqref{eq:sum xe},\eqref{eq:sum xe^2}}}{\geq} 
\frac{\left(\sum_{e\in\binom{W}{2}}x_e\right)^2}{\sum_{e\in\binom{W}{2}}x_e^2}\\
&\overset{\eqref{eq:sum xe},\eqref{eq:sum xe^2}}{\geq} 
\frac{\left(\gamma \xi_{\textrm{CL}}C_0^{e(F)-1}|W|^2/4\right)^2}
{2(v(F)+1)^{v(F)}C_1^{2(e(F)-1)}|W|^2/C'_0}\\
&\overset{\phantom{\eqref{eq:sum xe},\eqref{eq:sum xe^2}}}{>}
\frac{\gamma^2 \xi_{\textrm{CL}}^2 C_0^{2(e(F)-1)}C'_0}{16 (v(F)+1)^{v(F)}C_1^{2(e(F)-1)}}\cdot \binom{|W|}{2}\\
&\overset{\eqref{eq:d},\eqref{eq:gamma}}{\geq}d\cdot \binom{|W|}{2}\,.
\end{align*}
Recalling that $W\subseteq V_{a_0}$ with $\vert W\vert\geq  \rho\vert V_{a_0}\vert$ was arbitrary, implies that~$G_0$ is $(\rho,d)$-dense which finishes the proof.
\end{proof}

\section{Concluding remarks}

\subsection{Ramsey properties for  \texorpdfstring{$\ZZn$}{the integers}}
The methods used here can be adjusted to obtain the sharpness for some cases of Rado's theorem for two colours
in $\ZZn$. For van der Waerden's theorem such a result appeared in~\cite{FHPS14} and, in fact, the work presented here 
relied on some of those ideas. However, the approach in~\cite{FHPS14} made use of the fact that the corresponding extremal problem 
(known as Szemer\'edi's theorem) has density~$0$, which limits the approach to so-called \emph{density regular systems} (see, e.g.,~\cite{FGR88}).
Maybe the simplest regular, but not density regular, instance of Rado's theorem is the well known result of Schur~\cite{Schur}, which asserts for 
finite colourings of $\ZZn$ the existence of  a
monochromatic solution for the equation $x+y=z$ for sufficiently large~$n$. The threshold for 
this property appeared in~\cite{GRR96} for two colours and in~\cites{CG10,FrRoSch10} for an arbitrary number of colours. 
The sharpness for two colours is based on some of the ideas used in~\cite{FHPS14} and the work here, will appear in the PhD thesis
of the second author~\cite{Schul}. 

\subsection{Ramsey properties of nearly partite hypergraphs}
Instead of nearly bipartite graphs one may consider \emph{nearly $k$-partite $k$-uniform hypergraphs}, i.e.,
$k$-uniform hypergraphs with vertex partition $V_1\dcup\dots\dcup V_k$ and the property 
that at most one hyperedge is contained in $V_1$ and the remaining hyperedges contain exactly one vertex from 
each vertex class. Again one may require additional \emph{balancedness} assumptions (similar as in Theorem~\ref{thm:Main}).
However, for the proof of a lemma corresponding to Lemma~\ref{lem:base} one would need a sparse version of the so-called
weak regularity lemma for hypergraphs and a corresponding embedding/counting lemma for subhypergraphs of random 
hypergraphs (see, e.g.,~\cite{CGSS14}*{Section~5.1}). For the more relaxed version of nearly partite, which would 
allow the additional hyperedge to span across more than one vertex class, one would likely need sparse analogues of 
the strong hypergraph regularity method for subhypergraphs of random hypergraphs.

\subsection{Ramsey properties for more general graphs and more colours}
It would be very interesting to extend Theorem~\ref{thm:Main} to more general graphs~$F$. The class of 
nearly bipartite graphs contains the triangle~$K_3$ and an extension for all cliques 
would be desirable. The main obstacle seems to establish a suitable analogue of Lemma~\ref{lem:base}
for this case.

Another limitation is the restriction to two colours only. The R\"odl-Ruci\'nski theorem~\cite{RR95}
applies, up to very few exceptions (see, e.g.,~\cite{JLR00}*{Section~8.1}), to arbitrary graphs and any number of colours~$r\geq 2$.
However, besides for the case of trees (see~\cite{FrKr00}), all known sharpness results address only the two-colour case 
and extending these results to more than two colours appears an interesting open problem in the area. 

Finally, we mention that 
due to Friedgut's criterion the $c=c(n)$ in  Theorem~\ref{thm:Main} is bounded by constants, but it may depend on~$n$.
It seems plausible, that  a strengthening of Theorem~\ref{thm:Main} for some constant $c$ independent of $n$
also holds. However, this would likely require a very different approach to these problems.

\subsection*{Acknowledgement} We are indebted to both referees for their thorough 
reading of the manuscript and their constructive and helpful remarks.

\begin{bibdiv}
\begin{biblist}

\bib{BMS12}{article}{
   author={Balogh, J{\'o}zsef},
   author={Morris, Robert},
   author={Samotij, Wojciech},
   title={Independent sets in hypergraphs},
   journal={J. Amer. Math. Soc.},
   volume={28},
   date={2015},
   number={3},
   pages={669--709},
   issn={0894-0347},
   review={\MR{3327533}},
   doi={10.1090/S0894-0347-2014-00816-X},
}

\bib{CG10}{article}{
   author={Conlon, D.},
   author={Gowers, W. T.},
   title={Combinatorial theorems in sparse random sets},
   journal={Ann. of Math. (2)},
   volume={184},
   date={2016},
   number={2},
   pages={367--454},
   issn={0003-486X},
   review={\MR{3548529}},
   doi={10.4007/annals.2016.184.2.2},
}

\bib{CGSS14}{article}{
   author={Conlon, D.},
   author={Gowers, W. T.},
   author={Samotij, W.},
   author={Schacht, M.},
   title={On the K\L R conjecture in random graphs},
   journal={Israel J. Math.},
   volume={203},
   date={2014},
   number={1},
   pages={535--580},
   issn={0021-2172},
   review={\MR{3273450}},
   doi={10.1007/s11856-014-1120-1},
}

\bib{ES84}{article}{
   author={Erd{\H{o}}s, P.},
   author={Simonovits, M.},
   title={Cube-supersaturated graphs and related problems},
   conference={
      title={Progress in graph theory},
      address={Waterloo, Ont.},
      date={1982},
   },
   book={
      publisher={Academic Press, Toronto, ON},
   },
   date={1984},
   pages={203--218},
   review={\MR{776802}},
}

\bib{FGR88}{article}{
   author={Frankl, P.},
   author={Graham, R. L.},
   author={R{\"o}dl, V.},
   title={Quantitative theorems for regular systems of equations},
   journal={J. Combin. Theory Ser. A},
   volume={47},
   date={1988},
   number={2},
   pages={246--261},
   issn={0097-3165},
   review={\MR{930955 (89d:05020)}},
   doi={10.1016/0097-3165(88)90020-9},
}

\bib{FrBou99}{article}{
   author={Friedgut, Ehud},
   title={Sharp thresholds of graph properties, and the $k$-sat problem},
   note={With an appendix by Jean Bourgain},
   journal={J. Amer. Math. Soc.},
   volume={12},
   date={1999},
   number={4},
   pages={1017--1054},
   issn={0894-0347},
   review={\MR{1678031 (2000a:05183)}},
   doi={10.1090/S0894-0347-99-00305-7},
}

\bib{Fr05}{article}{
   author={Friedgut, Ehud},
   title={Hunting for sharp thresholds},
   journal={Random Structures Algorithms},
   volume={26},
   date={2005},
   number={1-2},
   pages={37--51},
   issn={1042-9832},
   review={\MR{2116574 (2005k:05216)}},
   doi={10.1002/rsa.20042},
}

\bib{FHPS14}{article}{
    author={Friedgut, Ehud},
	author={H{\`a}n, H.},
	author={Person, Y.},
	author={Schacht, M.},
    title={A sharp threshold for van der {W}aerden's theorem in random subsets},
   journal={Discrete Anal.},
   date={2016},
   pages={Paper No. 615, 20},
   issn={2397-3129},
   review={\MR{3533306}},
   doi={10.19086/da.615},
}

\bib{FrKr00}{article}{
   author={Friedgut, Ehud},
   author={Krivelevich, Michael},
   title={Sharp thresholds for certain Ramsey properties of random graphs},
   journal={Random Structures Algorithms},
   volume={17},
   date={2000},
   number={1},
   pages={1--19},
   issn={1042-9832},
   review={\MR{1768845 (2001i:05136)}},
   doi={1r0.1002/1098-2418(200008)17:1$<$1::AID-RSA1$>$3.0.CO;2-4},
}

\bib{FRRT06}{article}{
   author={Friedgut, Ehud},
   author={R{\"o}dl, Vojtech},
   author={Ruci{\'n}ski, Andrzej},
   author={Tetali, Prasad},
   title={A sharp threshold for random graphs with a monochromatic triangle
   in every edge coloring},
   journal={Mem. Amer. Math. Soc.},
   volume={179},
   date={2006},
   number={845},
   pages={vi+66},
   issn={0065-9266},
   review={\MR{2183532 (2006h:05208)}},
   doi={10.1090/memo/0845},
}

\bib{FrRoSch10}{article}{
   author={Friedgut, Ehud},
   author={R{\"o}dl, Vojt{\v{e}}ch},
   author={Schacht, Mathias},
   title={Ramsey properties of random discrete structures},
   journal={Random Structures Algorithms},
   volume={37},
   date={2010},
   number={4},
   pages={407--436},
   issn={1042-9832},
   review={\MR{2760356 (2012a:05274)}},
   doi={10.1002/rsa.20352},
}

\bib{GRR96}{article}{
   author={Graham, Ronald L.},
   author={R{\"o}dl, Vojtech},
   author={Ruci{\'n}ski, Andrzej},
   title={On Schur properties of random subsets of integers},
   journal={J. Number Theory},
   volume={61},
   date={1996},
   number={2},
   pages={388--408},
   issn={0022-314X},
   review={\MR{1423060 (98c:05013)}},
   doi={10.1006/jnth.1996.0155},
}

\bib{Ja90}{article}{
   author={Janson, Svante},
   title={Poisson approximation for large deviations},
   journal={Random Structures Algorithms},
   volume={1},
   date={1990},
   number={2},
   pages={221--229},
   issn={1042-9832},
   review={\MR{1138428 (93a:60041)}},
   doi={10.1002/rsa.3240010209},
}

\bib{JLR90}{article}{
   author={Janson, Svante},
   author={{\L}uczak, Tomasz},
   author={Ruci{\'n}ski, Andrzej},
   title={An exponential bound for the probability of nonexistence of a
   specified subgraph in a random graph},
   conference={
      title={Random graphs '87},
      address={Pozna\'n},
      date={1987},
   },
   book={
      publisher={Wiley, Chichester},
   },
   date={1990},
   pages={73--87},
   review={\MR{1094125 (91m:05168)}},
}
		
\bib{JLR00}{book}{
   author={Janson, Svante},
   author={{\L}uczak, Tomasz},
   author={Ruci{\'n}ski, Andrzej},
   title={Random graphs},
   series={Wiley-Interscience Series in Discrete Mathematics and
   Optimization},
   publisher={Wiley-Interscience, New York},
   date={2000},
   pages={xii+333},
   isbn={0-471-17541-2},
   review={\MR{1782847 (2001k:05180)}},
   doi={10.1002/9781118032718},
}

\bib{Ko97}{article}{
   author={Kohayakawa, Y.},
   title={Szemer\'edi's regularity lemma for sparse graphs},
   conference={
      title={Foundations of computational mathematics},
      address={Rio de Janeiro},
      date={1997},
   },
   book={
      publisher={Springer, Berlin},
   },
   date={1997},
   pages={216--230},
   review={\MR{1661982 (99g:05145)}},
}

\bib{KST54}{article}{
   author={K{\H{o}}v{\'a}ri, T.},
   author={S{\'o}s, V. T.},
   author={Tur{\'a}n, P.},
   title={On a problem of K. Zarankiewicz},
   journal={Colloquium Math.},
   volume={3},
   date={1954},
   pages={50--57},
   review={\MR{0065617 (16,456a)}},
}

\bib{RR93}{article}{
   author={R{\"o}dl, Vojt{\v{e}}ch},
   author={Ruci{\'n}ski, Andrzej},
   title={Lower bounds on probability thresholds for Ramsey properties},
   conference={
      title={Combinatorics, Paul Erd\H os is eighty, Vol.\ 1},
   },
   book={
      series={Bolyai Soc. Math. Stud.},
      publisher={J\'anos Bolyai Math. Soc., Budapest},
   },
   date={1993},
   pages={317--346},
   review={\MR{1249720 (95b:05150)}},
}

\bib{RR95}{article}{
   author={R{\"o}dl, Vojt{\v{e}}ch},
   author={Ruci{\'n}ski, Andrzej},
   title={Threshold functions for Ramsey properties},
   journal={J. Amer. Math. Soc.},
   volume={8},
   date={1995},
   number={4},
   pages={917--942},
   issn={0894-0347},
   review={\MR{1276825 (96h:05141)}},
   doi={10.2307/2152833},
}

\bib{ST12}{article}{
   author={Saxton, David},
   author={Thomason, Andrew},
   title={Hypergraph containers},
   journal={Invent. Math.},
   volume={201},
   date={2015},
   number={3},
   pages={925--992},
   issn={0020-9910},
   review={\MR{3385638}},
   doi={10.1007/s00222-014-0562-8},
}

\bib{Sch09}{article}{
   author={Schacht, Mathias},
   title={Extremal results for random discrete structures},
   journal={Ann. of Math. (2)},
   volume={184},
   date={2016},
   number={2},
   pages={333--365},
   issn={0003-486X},
   review={\MR{3548528}},
   doi={10.4007/annals.2016.184.2.1},
}

\bib{Schul}{thesis}{
   author={Schulenburg, Fabian},
   title={Threshold results for cycles},
   type={PhD thesis},
   organization={Fachbereich Mathematik, Universit\"at Hamburg},
   note={Submitted},
}

\bib{Schur}{article}{
    Author = {I. {Schur}},
    Title = {{\"Uber die Kongruenz $x^m+y^m\equiv z^m (\text{mod}\ p)$}},
    Journal = {{Jahresber. Dtsch. Math.-Ver.}},
    ISSN = {0012-0456; 1869-7135/e},
    Volume = {25},
    Pages = {114--117},
    date = {1916},
    note = {German},
}

\bib{Sp90}{article}{
   author={Spencer, Joel},
   title={Counting extensions},
   journal={J. Combin. Theory Ser. A},
   volume={55},
   date={1990},
   number={2},
   pages={247--255},
   issn={0097-3165},
   review={\MR{1075710 (91j:05096)}},
   doi={10.1016/0097-3165(90)90070-D},
}

\end{biblist}
\end{bibdiv}

\end{document}